\theoremstyle{plain}
\newtheorem{theorem}{Theorem}[section]
\newtheorem{lemma}[theorem]{Lemma}
\newtheorem{proposition}[theorem]{Proposition}
\newtheorem{corollary}[theorem]{Corollary}
\newtheorem{theoremx}{Theorem}
\theoremstyle{definition}
\newtheorem{definition}[theorem]{Definition}
\newtheorem{remark}[theorem]{Remark}
\numberwithin{equation}{section}
\newcommand\fantome[1]{}
\def\bC{\mathbb C}
\def\bK{\mathbb K}
\def\bT{\mathbb T}
\def\cO{\mathcal O}
\def\cL{\mathcal L}
\def\Fq{\mathbb F_q}
\def\lra{\longrightarrow}
\newcommand{\Ab}{\textbf{A}}
\newcommand{\Kb}{\textbf{K}}
\DeclareMathOperator{\ev}{ev}
\DeclareMathOperator{\Log}{Log}
\DeclareMathOperator{\Gal}{Gal}
\DeclareMathOperator{\End}{End}
\DeclareMathOperator{\Spec}{Spec}
\DeclareMathOperator{\sgn}{sgn}
\newcommand{\laurent}[2]{{#1 (( #2 ))}}
\newcommand{\F}{\mathbb{F}}
\newcommand{\C}{\mathbb{C}}
\newcommand{\cM}{\mathcal{M}}
\newcommand{\bA}{\textbf{A}}
\newcommand{\bu}{\mathbf{u}}
\newcommand{\bv}{\mathbf{v}}
\newcommand{\bi}{\mathbf{i}}
\newcommand{\bz}{\mathbf{z}}
\newcommand{\bb}{\mathbf{b}}
\newcommand{\bw}{\mathbf{w}}
\newcommand{\inv}{\ensuremath ^{-1}}
\newcommand{\isom}{\ensuremath \cong}
\newcommand{\tsgn}{\widetilde{\sgn}}
\newcommand{\TT}{\mathbb{T}}
\DeclareMathOperator{\Exp}{Exp}
\DeclareMathOperator{\Res}{Res}
\DeclareMathOperator{\RES}{RES}
\DeclareMathOperator{\Mat}{Mat}
\DeclareMathOperator{\divisor}{div}
\newcommand{\on}{\ensuremath ^{\otimes n}}
\newcommand{\twist}{^{(1)}}
\newcommand{\twistinv}{^{(-1)}}
\newcommand{\twisti}{^{(i)}}
\newcommand{\twistk}[1]{^{(#1)}}
\definecolor{ForestGreen}{rgb}{0.0, 0.5, 0.0}
\author{Nathan Green}
\address{Department of Mathematics\\
University of California, San Diego (UCSD)\\
9500 Gilman Drive 0112\\
La Jolla, CA  92093-0112\\
United States of America (USA)\\}
\email{n2green@ucsd.edu}
\author{Tuan Ngo Dac}
\address{
CNRS - Université Claude Bernard Lyon 1, 
Institut Camille Jordan, UMR 5208,
43 boulevard du 11 novembre 1918,
69622 Villeurbanne Cedex, France
}
\email{ngodac@math.univ-lyon1.fr}
\title{Algebraic relations among Goss's zeta values on elliptic curves}
\begin{document}

\begin{abstract}
In 2007 Chang and Yu determined all the algebraic relations among Goss's zeta values for $A=\Fq[\theta]$ also known as the Carlitz zeta values. Goss raised the problem about algebraic relations among Goss's zeta values for a general base ring $A$ but very little is known. In this paper we develop a general method and determine all algebraic relations among Goss's zeta values for the base ring $A$ attached to an elliptic curve over $\Fq$.  To our knowledge, these are the first non-trivial solutions of Goss's problem for a base ring whose class number is strictly greater than 1.
\end{abstract}

\subjclass[2010]{Primary 11J93; Secondary 11G09, 11M38}

\keywords{Algebraic independence, Goss's zeta values, Drinfeld modules, $t$-motives, periods}

\date{\today}

\maketitle

\tableofcontents


\section*{Introduction}

\subsection{Background}
A classical topic in number theory is the study of the Riemann zeta function $\zeta(.)$ and its special values $\zeta(n)$ for $n \in \mathbb N$ and $n \geq 2$. By a well-known analogy between the arithmetic of number fields and global function fields, Carlitz suggested to transport classical results relating to the zeta function to the function field setting in positive characteristic. In \cite{Car35}, he considered the rational function field equipped with the infinity place (i.e. when $A=\Fq[\theta]$) and introduced the Carlitz zeta values $\zeta_A(n)$ which are considered as the analogues of $\zeta(n)$. Many years after Carlitz's pioneer work, Goss showed that these values could be realized as the special values of the so-called Goss-Carlitz zeta function $\zeta_A(.)$ over a suitable generalization of the complex plane. Indeed, Goss's zeta functions are a special case of the $L$-functions he introduced in \cite{Gos79} for more general base rings $A$. The special values of this type of $L$-function, called Goss's zeta values, are at the heart of function field arithmetic in the last forty years. Various works have revealed the importance of these zeta values for both their independent interest and for their applications to a wide variety of arithmetic applications, including multiple zeta values (see the excellent articles \cite{Tha16,Tha17} for an overview), Anderson's log-algebraicity identities (see \cite{And94,And96,ANDTR17a,GP18,Tha92}), Taelman's units and the class formula à la Taelman (see \cite{ANDTR20b,Deb16,Dem14,Dem15,Fan15,Mor18b,Tae12a} and \cite{ANDTR20} for an overview). 

For $A=\Fq[\theta]$, the transcendence of the Carlitz zeta values at positive integers $\zeta_A(n)$ $(n \geq 1)$ was first proved by Jing Yu \cite{Yu91}. Further, all linear and algebraic relations among these values were determined by Jing Yu \cite{Yu97} and by Chieh-Yu Chang and Jing Yu \cite{CY07}, respectively. These results are very striking when compared to the extremely limited knowledge we have about the transcendence of odd Riemann zeta values.

Goss raised the problem of extending the above work of Chang and Yu to a more general setting. For a base ring $A$ of class number one, several partial results about Goss's zeta values have been obtained by a similar method (see for example \cite{LP13}). However, to our knowledge, nothing is known when the class number of $A$ is greater than 1.

In this paper, we provide the first step towards the resolution of the above problem and develop a conceptual method to deal with the genus 1 case.  The advantage of working in the genus 1 case (elliptic curves) is that we have an explicit group law on the curve which we often exploit in our arguments.  On the other hand, where possible we strive to give general arguments in our proofs which will readily generalize to curves of arbitrary genus.  Our results  determine all algebraic relations among Goss's zeta values attached to the base ring $A$ which is the ring of regular functions of an elliptic curves over a finite field.  To do so, we reduce the study of Goss's zeta values, which are fundamentally analytic objects, to that of Anderson's zeta values, which are of arithmetic nature. Then we use a generalization of Anderson-Thakur's theorem on elliptic curves to construct zeta $t$-motives attached to Anderson's zeta values. We apply the work of Hardouin on Tannakian groups in positive characteristic and compute the Galois groups attached to zeta $t$-motives. Finally, we apply the transcendence method introduced by Papanikolas to obtain our algebraic independence result.

\bigskip

\subsection{Statement of Results}
Let us give now more precise statements of our results.

Let $X$ be a geometrically connected smooth projective curve over a finite field $\Fq$ of characteristic $p$, having $q$ elements. We denote by $K$ its  function field and fix a place $\infty$ of $K$ of degree $d_\infty = 1$. We denote by $A$ the ring of elements of $K$ which are regular outside $\infty$. The $\infty$-adic completion $K_\infty$ of $K$ is equipped with the normalized $\infty$-adic valuation $v_\infty:K_\infty \rightarrow \mathbb Z\cup\{+\infty\}$. The completion $\mathbb C_\infty$ of a fixed algebraic closure $\overline K_\infty$ of $K_\infty$ comes with a unique valuation extending $v_\infty$, it will still be denoted by $v_\infty$. 

To define Goss's zeta values (our exposition follows closely to \cite[\S 8.2-8.7]{Gos96}), we let $\pi \in K^*_\infty$ be a uniformizer so that we can identify $K_\infty$ with $\laurent{\Fq}{\pi}$. For $x\in \overline{K}_\infty^\times$, one can write $x=\pi^{v_\infty(x)} \sgn (x) \langle x \rangle$ where $\sgn(x)\in \overline{\mathbb F}_q^\times$ and $\langle x \rangle$ is a $1$-unit. If we denote by $\mathcal{I}(A)$ the group of fractional ideals of $A$, then Goss defines a group homomorphism
	\[ [\cdot]_A: \mathcal I(A) \rightarrow \overline{K}_\infty^\times \] 
such that for $x\in K^\times$,  we have $[xA]_A=x/\sgn(x)$.

Let $E/K$ be a finite extension, and let $O_E$ be the integral closure of $A$ in $E$. Then Goss defined a zeta function $\zeta_{O_E}(.)$ over a suitable generalization of the complex plane $\mathbb S_\infty$. We are interested in Goss's zeta values for $n \in \mathbb N$ given by
	\[ \zeta_{O_E}(n)=\sum_{d\geq 0} \sum_{\substack{\frak I\in \mathcal I(O_E), \frak I\subset O_E,\\ \deg(N_{E/K}(\frak I))=d}} \left[\frac{O_E}{\frak I}\right]_A^{-n} \in \overline{K}_\infty^\times \]
where $\mathcal{I}(O_E)$ denotes the group of fractional ideals of $O_E$.

\subsection{Carlitz zeta values (the genus 0 case).}

We set our curve $X$ to be the projective line $\mathbb P^1/\Fq$ equipped with the infinity point $\infty \in \mathbb P^1(\Fq)$. Then $A=\Fq[\theta]$, $K=\Fq(\theta)$ and $K_\infty=\Fq((1/\theta))$. Let $A_+$ the set of monic polynomials in $A$.

Since the class number of $A$ is $1$, by the above discussion, Goss's map is given by $[xA]_A=x/\sgn(x)$ for $x\in K^\times$. Then the Carlitz zeta values, which are special values of the Carlitz-Goss zeta function, are given by
	\[ \zeta_A(n):=\sum_{a \in A_+} \frac{1}{a^n} \in K_\infty^\times, \quad n \in \mathbb N. \]

Carlitz noticed that these values are intimately related to the so-called Carlitz module $C$ that is the first example of a Drinfeld module. Then he proved two fundamental theorems about these values. In analogy with the classical Euler formulas, Carlitz's first theorem asserts that for the so-called Carlitz period $\widetilde{\pi} \in \overline K_\infty^\times$, we have the Carliz-Euler relations:
	\[ \frac{\zeta_A(n)}{\widetilde{\pi}^n} \in K \quad \text{ for all } n\geq 1, n\equiv 0 \pmod{q-1}. \]
His second theorem states that $\zeta_A(1)$ is the logarithm of $1$ of the Carlitz module $C$, which is the first example of log-algebraicity identities.

Many years after the work of Carlitz, Anderson and Thakur \cite{AT90} developed an explicit theory of tensor powers of the Carlitz module $C^{\otimes n}$ $(n \in \mathbb N)$ and expressed $\zeta_A(n)$ as the last coordinate of the logarithm of a special algebraic point of $C^{\otimes n}$. Using this result, Yu proved that $\zeta_A(n)$ is transcendental in \cite{Yu91} and that the only $\overline K$-linear relations among the Carlitz zeta values are the above Carlitz-Euler relations in \cite{Yu97}.

For algebraic relations among the Carlitz zeta values, we obviously have the Frobenius relations which state that for $m,n \in \mathbb N$, 
	\[ \zeta_A(p^m n)=(\zeta_A(n))^{p^m}. \] 
Extending the previous works of Yu, Chang and Yu \cite{CY07} proved that the Carlitz-Euler relations and the Frobenius relations give rise to all algebraic relations among the Carlitz zeta values. To prove this result, Chang and Yu use the connection between Anderson $\Fq[\theta]$-modules and $t$-motives as well as the powerful criterion for transcendence introduced by Anderson-Brownawell-Papanikolas \cite{ABP04} and Papanikolas \cite{Pap08}. This latter criterion, which we will also use in our present paper, states roughly that the dimension of the motivic Galois group of a $t$-motive is equal to the transcendence degree of its attached period matrix. 

\subsection{Goss's zeta values on elliptic curves (the genus 1 case).} 

In a series of papers \cite{Gre17a,Gre17b,GP18}, Papanikolas and the first author  carried out an extensive study to move from the projective line $\mathbb P^1/\Fq$ (the genus 0 case) to elliptic curves over $\Fq$ (the genus 1 case). 

We work with an elliptic curve $X$ defined over $\Fq$ equipped with a rational point $\infty \in X(\Fq)$. Then $A=\Fq[\theta,\eta]$ where $\theta$ and $\eta$ satisfy a cubic Weierstrass equation for $X$. We denote by $K=\Fq(\theta,\eta)$ its fraction field and by $H \subset K_\infty$ the Hilbert class field of $A$.

The class number $\text{Cl}(A)$ of $A$ equals to the number of rational points $X(\Fq)$ on the elliptic curve $X$ and also to the degree of extension $[H:K]$, i.e.
	\[ \text{Cl}(A)=|X(\Fq)|=[H:K]. \]
For a prime ideal $\frak p$ of $A$ of degree $1$ corresponding to an $\Fq$-rational point on $X$, we consider the sum
	\[ \zeta_A(\frak p,n)=\sum_{\substack{a \in \mathfrak{p}^{-1},\\ \sgn(a)=1}}  \frac{1}{a^n}, \quad n \in \mathbb N. \]
The sums $\zeta_A(\frak p,n)$ where $\frak p$ runs through the set $\mathcal P$ of prime ideals of $A$ of degree 1 are {\it the elementary blocks} in the study of Goss's zeta values on elliptic curves. When $E=K$, $\zeta_A(n)$ can be expressed as a $\overline K$-linear combination of $\zeta_A(\frak p,n)$. When $E=H$, $\zeta_{O_H}(n)$ which is a regulator in the sense of Taelman (see \cite{ANDTR17a,Tae12a}) can be written as a product of $\overline K$-linear combinations of $\zeta_A(\frak p,n)$.

Contrary to the $\Fq[\theta]$-case, one of the main issues is that the elementary blocks $\zeta_A(\frak p,n)$ $(\frak p \in \mathcal P)$ are of analytic nature. To overcome this problem, Anderson introduced the so-called zeta values of Anderson $\zeta_\rho(b_i,n)$ (see \eqref{D:Anderson Zeta Values} for a precise definition) indexed by a $K$-basis $b_i \in O_H$ of $H$ (recall that $|\mathcal P|=[H:K]=\text{Cl}(A)$). They are also $\overline K$-linear combination of $\zeta_A(\frak p,n)$. The crucial point is that {\it Anderson's zeta values are of arithmetic nature and intimately related to the standard rank 1 sign normalized Drinfeld $A$-module $\rho$ which plays the role of the Carlitz module}.

In \cite{GP18}, Papanikolas and the first author developed an explicit theory of the above Drinfeld $A$-module $\rho$. They rediscovered the celebrated Anderson's log-algebraicity theorem on elliptic curves and proved that $\zeta_\rho(b_i,1)$ can be realized as the logarithm of $\rho$ evaluated at a prescribed algebraic point. In \cite{Gre17a,Gre17b}, the first author introduced the tensor powers $\rho\on$ for $n \in \mathbb N$ and proved basic properties of Anderson modules $\rho\on$. Then he obtained a generalization of Anderson-Thakur's theorem for small values $n <q$. By a completely different approach based on the notion of Stark units and Pellarin's $L$-series, Anglès, Tavares Ribeiro and the second author \cite{ANDTR19a} proved a generalization of Anderson-Thakur's theorem for all $n \in \mathbb N$. It states that for any $n \in \mathbb N$, Anderson's zeta values $\zeta_\rho(b_i,1)$ can be written as the last coordinate of the logarithm of $\rho\on$ evaluated at an algebraic point \footnote{In fact, this theorem holds for any general base ring $A$, see \cite{ANDTR19a}.}.

In this paper, using the aforementioned works, we generalize the work of Chang and Yu \cite{CY07} for the Carlitz zeta values and determine all algebraic relations among Anderson's zeta values on elliptic curves. 

\begin{theoremx}[Theorem \ref{theorem:main2}]
Let $m \in \mathbb N$ and $\{b_1,\ldots,b_h\}$ be a $K$-basis of $H$ with $b_i \in B$. We consider the following set
\begin{align*}
\mathcal A=\{\pi_\rho\} \cup \{\zeta_\rho(b_i,n): 1 \leq i \leq h, 1 \leq n \leq m \text{ such that } q-1 \nmid n \text{ and } p \nmid n \}
\end{align*}
where $\pi_\rho$ is the period attached to $\rho$. Then the elements of $\mathcal A$ are algebraically independent over $\overline K$.
\end{theoremx}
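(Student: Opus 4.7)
The plan is to adapt the Chang--Yu strategy of \cite{CY07} from the Carlitz setting to elliptic curves, combining four ingredients already available in this context: the generalized Anderson--Thakur theorem of \cite{ANDTR19a}, the $t$-motivic theory of $\rho^{\otimes n}$ developed in \cite{Gre17a,Gre17b,GP18}, Hardouin's theorem on Tannakian groups in positive characteristic, and the Papanikolas criterion \cite{Pap08} linking the dimension of a motivic Galois group to the transcendence degree of its period matrix.

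First, for each admissible pair $(i,n)$ with $1 \leq n \leq m$, $q-1 \nmid n$, and $p \nmid n$, the generalized Anderson--Thakur theorem furnishes an algebraic point $v_{i,n}$ such that $\zeta_\rho(b_i,n)$ appears (up to an algebraic factor) as the last coordinate of $\log_{\rho^{\otimes n}}(v_{i,n})$. I would package this information into an Anderson dual $t$-motive $\mathcal{M}_{i,n}$ fitting in an exact sequence
\[ 0 \to \mathcal{N}_n \to \mathcal{M}_{i,n} \to \mathbf{1} \to 0, \]
where $\mathcal{N}_n$ is the dual $t$-motive attached to $\rho^{\otimes n}$, and whose period matrix exhibits both $\pi_\rho^n$ and $\zeta_\rho(b_i,n)$. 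Setting $\mathcal{M} := \bigoplus_{(i,n)} \mathcal{M}_{i,n}$, the full period matrix of $\mathcal{M}$ contains every element of $\mathcal{A}$ as an entry, so by Papanikolas' criterion it suffices to compute $\dim \Gamma_\mathcal{M}$ and show it equals $|\mathcal{A}| = 1 + hN$, where $N$ is the number of admissible values of $n$.

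On the semisimple quotient $\bigoplus_n \mathcal{N}_n^{\oplus h}$, the Galois group is the torus $\Gm$ acting through the characters $t \mapsto t^n$: the hypothesis $p \nmid n$ prevents the characters from collapsing via Frobenius twist, and $q-1 \nmid n$ prevents collapse via Euler-type relations. The full group sits in an extension
\[ 1 \to V \to \Gamma_\mathcal{M} \to \Gm \to 1, \]
and Hardouin's theorem reduces the determination of the unipotent radical $V$ to the question of $\overline{K}(t)$-linear independence of the extension classes $[\mathcal{M}_{i,n}]$ in the relevant $\mathrm{Ext}^1$-groups of the category of Frobenius modules.

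The main obstacle is precisely this linear independence. At fixed $n$, independence of the $h$ Kummer directions indexed by $i$ should follow from the fact that $\{b_1,\ldots,b_h\}$ is a $K$-basis of $H$ and thus separates the $\Fq$-rational points of $X$ under $H \otimes_K K_\infty \isom \prod_{\mathfrak{p} \in \mathcal{P}} K_\infty$, yielding $\overline{K}$-linearly independent algebraic points $v_{i,n}$. Across different $n$, independence reduces to a $\sigma$-difference equation in the Tate algebra whose only solution is the trivial one, the conditions $p \nmid n$ and $q-1 \nmid n$ being the precise obstructions to any collapse (mirroring the arithmetic of \cite[Thm.~4.1]{CY07}). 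Ruling out hidden relations coming from the class group structure of $A$ is the genuinely new difficulty compared to the $\Fq[\theta]$ case, and is where the explicit elliptic group law will be exploited. Putting everything together yields $\dim V = hN$ and hence $\dim \Gamma_\mathcal{M} = 1 + hN$; the Papanikolas criterion then delivers algebraic independence of $\mathcal{A}$ over $\overline{K}$.
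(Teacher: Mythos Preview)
Your overall architecture matches the paper's: Anderson--Thakur gives log-algebraic vectors, one builds extensions $0 \to \mathcal{N}_n \to \mathcal{M}_{i,n} \to \mathbf{1} \to 0$, applies Hardouin to compute unipotent radicals, uses a Chang--Yu weight argument across different $n$, and finishes with Papanikolas. But there is a genuine gap in the dimension bookkeeping that would cause the argument to fail as written.

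The error is in your claim that ``the Galois group is the torus $\mathbb{G}_m$ acting through the characters $t \mapsto t^n$'' and consequently that $\dim \Gamma_{\mathcal{M}} = 1 + hN = |\mathcal{A}|$. Papanikolas' machinery works with $t$-motives over $\Fq(t)$, and over $\Fq[t]$ the Drinfeld module $\rho$ has rank $2$, not $1$. Hence $\Gamma_{\mathcal{N}_n} = \mathrm{Res}_{K/\Fq(t)}\,\mathbb{G}_{m,K}$, a $2$-dimensional torus (Proposition~\ref{proposition: Galois group}), and each Kummer extension contributes a $2$-dimensional unipotent piece, not a line. The correct count is $\dim \Gamma_{\mathcal{M}} = 2 + 2hN$, which does \emph{not} equal $|\mathcal{A}|$. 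So the sentence ``the Papanikolas criterion then delivers algebraic independence of $\mathcal{A}$'' is not justified: knowing that the full period field has transcendence degree $2+2hN$ does not by itself isolate the $1+hN$ specific elements $\pi_\rho,\zeta_\rho(b_i,n)$ as algebraically independent. The paper closes this gap via the explicit period calculations of \S\ref{sec: period calculations} (see Lemma~\ref{L:ulemma} and the lemma showing $\overline K(\Psi_\rho^{\otimes n}(\theta)) = \overline K(\pi_\rho^n, Z)$), which exhibit the period field as generated by exactly $2+2hN$ explicit quantities, among them the elements of $\mathcal{A}$; these then form a transcendence basis. A second, smaller point: the conditions $p\nmid n$, $q-1\nmid n$ are not used to separate torus characters but rather in the weight argument on the \emph{unipotent} side (Proposition~\ref{prop: unipotent direct sum}); and the $\mathrm{Ext}^1$ independence is obtained not from abstract difference equations but from the $K$-linear independence of $\pi_\rho^n, \zeta_\rho(b_i,n)$ established via Yu's analytic subgroup theorem (Theorem~\ref{T:LinearIndependence} and Proposition~\ref{proposition: Ext}).
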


As an application, we also determine all algebraic relations among Goss's zeta values on elliptic curves (see also Theorem \ref{theorem: Goss zeta values}). 

\begin{theoremx}[Corollary \ref{cor: Goss zeta values}]
Let $m \in \mathbb N$ and $L$ be an extension of $K$ such that $L \subset H$. We consider the following set
\begin{align*}
\mathcal G_L=\{\pi_\rho\} \cup \{\zeta_{O_L}(n): 1 \leq n \leq m \text{ such that } q-1 \nmid n \text{ and } p \nmid n \}.
\end{align*}
Then the elements of $\mathcal G_L$ are algebraically independent over $\overline K$.
\end{theoremx}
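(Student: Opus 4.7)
The plan is to deduce this corollary directly from Theorem A by expressing each Goss zeta value $\zeta_{O_L}(n)$ as a nonzero polynomial, with coefficients in $\overline K$, in the $h$ Anderson zeta values $\zeta_\rho(b_1,n),\ldots,\zeta_\rho(b_h,n)$ at the same integer $n$.

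First, I would decompose $\zeta_{O_L}(n)$ using class field theory. Since $L\subset H$, the Galois group $\Gal(L/K)$ is a quotient of $\Gal(H/K)\cong \mathrm{Cl}(A)$. Regrouping the defining sum for $\zeta_{O_L}(n)$ according to ideal classes, and pushing forward via the norm $N_{L/K}$, yields the standard factorisation
\[ \zeta_{O_L}(n) \;=\; \prod_{\chi\in \widehat{\Gal(L/K)}} L_A(\chi,n), \qquad L_A(\chi,n)=\sum_{\mathfrak a}\chi(\mathfrak a)\,[\mathfrak a]_A^{-n}, \]
and grouping prime ideals by their ideal class shows that each twisted $L$-series $L_A(\chi,n)$ is a $\overline K$-linear combination of the elementary blocks $\zeta_A(\mathfrak p,n)$ for $\mathfrak p\in\mathcal P$.

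Second, since $\{\zeta_A(\mathfrak p,n)\}_{\mathfrak p\in\mathcal P}$ and $\{\zeta_\rho(b_i,n)\}_{1\leq i\leq h}$ have the same cardinality $h=|\mathcal P|=[H:K]$ and are related by an explicit invertible $\overline K$-linear change of basis (already used in setting up Theorem A), I can rewrite each $L_A(\chi,n)$ as a $\overline K$-linear form in the Anderson zeta values, and therefore express $\zeta_{O_L}(n)$ as a polynomial $P_n\in\overline K[Y_1,\ldots,Y_h]$ evaluated at $Y_i=\zeta_\rho(b_i,n)$. A crucial sub-step is to verify that every factor $L_A(\chi,n)$ is nonzero under the hypotheses $q-1\nmid n$ and $p\nmid n$, so that $P_n$ is non-constant; this should be extracted from Theorem A by observing that a vanishing $L_A(\chi,n)$ would give a nontrivial $\overline K$-linear relation among the algebraically independent $\zeta_\rho(b_i,n)$.

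Finally, I would conclude using Theorem A. Let $M=\{1\leq n\leq m : q-1\nmid n,\ p\nmid n\}$. For distinct $n\in M$, the polynomials $P_n$ involve pairwise disjoint subsets of the algebraically independent family
\[ \{\pi_\rho\}\cup \{\zeta_\rho(b_i,n) : 1\leq i\leq h,\ n\in M\} \]
provided by Theorem A. A short polynomial-dependency argument then shows that $\{\pi_\rho\}\cup\{P_n(\zeta_\rho(b_1,n),\ldots,\zeta_\rho(b_h,n))\}_{n\in M}=\mathcal G_L$ is itself algebraically independent over $\overline K$. The main obstacle is the non-vanishing step: one must show $L_A(\chi,n)\neq 0$ for every non-trivial character $\chi$ and every $n\in M$. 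The congruence conditions $q-1\nmid n$ and $p\nmid n$ are precisely designed to rule out the trivial zeros coming from the Carlitz--Euler and Frobenius relations highlighted in the introduction, and I would expect the verification to reduce to checking that the linear form on weight-$n$ Anderson values cut out by $\chi$ is nonzero, which follows from linear independence.
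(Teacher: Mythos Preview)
Your approach is essentially the same as the paper's: factor $\zeta_{O_L}(n)$ as a product of twisted $L$-values $L(n,\chi)$, express each of these as a $\overline K$-linear combination of the elementary blocks (hence of the Anderson zeta values $\zeta_\rho(b_i,n)$), verify non-vanishing via the linear independence contained in Theorem~A, and conclude by a disjoint-variables argument. The paper packages the middle step as a separate theorem (the $L(n,\chi)$, $\chi\in\widehat G$, together with $\pi_\rho$ are algebraically independent), but the logic is identical.

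There is, however, one genuine gap in your factorisation. You write
\[
\zeta_{O_L}(n)\;=\;\prod_{\chi\in\widehat{\Gal(L/K)}}L_A(\chi,n)
\]
and then want each $L_A(\chi,n)$ to be a $\overline K$-linear combination of the elementary blocks. For the latter you need $\chi$ to take values in $\overline K^\times\subset\bC_\infty^\times$, hence in $\overline{\mathbb F}_q^\times$; but in characteristic $p$ the character group $\Hom(\Gal(L/K),\overline{\mathbb F}_q^\times)$ only sees the prime-to-$p$ quotient of $\Gal(L/K)$. When $p\mid[L:K]$ the product above is therefore \emph{not} equal to $\zeta_{O_L}(n)$. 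The correct identity, which the paper invokes (citing Goss, \S 8.10), is
\[
\zeta_{O_L}(n)\;=\;\Bigl(\prod_{\chi} L(n,\chi)\Bigr)^{p^{k}},
\]
where $p^{k}$ is the exact power of $p$ dividing $[L:K]$ and the product runs over those $\chi\in\widehat G=\Hom(G,\overline{\mathbb F}_q^\times)$ trivial on the appropriate subgroup. This does not ultimately break your argument --- a $p^{k}$-th power of a nonzero polynomial in the Anderson values is still a non-constant polynomial in those values, so your final independence step goes through unchanged --- but the factorisation as you stated it is incorrect and must be repaired before the rest of the proof is valid.
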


We also prove algebraic independence of periods and logarithms of tensor powers of Drinfeld modules.

\begin{theoremx}[Theorem \ref{theorem: algebraic independence log}]
Suppose that $\mathbf{u}_1,\ldots,\mathbf{u}_m \in M_{n \times 1}(\bC_\infty)$ such that $\Exp_\rho^{\otimes n}(\mathbf{u}_i)=\mathbf{v}_i \in M_{n \times 1}(\overline K)$ and denote the $j$th entry of $\bu_i$ as $\bu_{i,j}$. If $\pi^n_\rho, \mathbf{u}_{1,n},\ldots,\mathbf{u}_{m,n}$ are linearly independent over $K$, then they are algebraically independent over $\overline K$.
\end{theoremx}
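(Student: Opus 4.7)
The plan is to apply the Papanikolas transcendence criterion, constructing a $t$-motive whose period matrix contains the quantities in question and computing its motivic Galois group. Following the blueprint of Papanikolas for the Carlitz case, adapted to the elliptic setting via the theory of $\rho^{\otimes n}$ developed in the earlier sections, I would first attach to each pair $(\bu_i, \bv_i)$ a $t$-motive $M_{\bu_i}$ fitting in a short exact sequence
\[
0 \to N \to M_{\bu_i} \to \mathbf{1} \to 0,
\]
where $N$ is the $t$-motive attached to $\rho^{\otimes n}$ and $\mathbf{1}$ is the trivial $t$-motive. Its rigid analytic trivialization has block form $\begin{pmatrix} \Psi_N & 0 \\ h_i & 1 \end{pmatrix}$, with $\Psi_N$ recovering $\pi_\rho^n$ and $h_i$ recovering $\bu_{i,n}$ upon specialization at $t=\theta$. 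Amalgamating these into $M := M_{\bu_1} \times_{\mathbf{1}} \cdots \times_{\mathbf{1}} M_{\bu_m}$ yields an extension $0 \to N \to M \to \mathbf{1}^m \to 0$ whose period matrix contains $\pi_\rho^n$ and all of $\bu_{1,n},\ldots,\bu_{m,n}$.

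The motivic Galois group $\Gamma_M$ sits in a short exact sequence $1 \to V \to \Gamma_M \to \Gamma_N \to 1$, where $V$ is a $\Gamma_N$-stable vector subgroup of $\mathbb G_a^m$. By the Galois group computation for $N$ carried out earlier in the paper, $\dim \Gamma_N = 1$, so the heart of the argument is to show that $\dim V = m$. For this I would invoke Hardouin's theorem on unipotent extensions in Tannakian categories: $\dim V$ equals the rank over $\End(N)$ of the span of the classes $[M_{\bu_i}] \in \mathrm{Ext}^1(\mathbf{1}, N)$, taken modulo the subspace of ``trivial'' extensions that split inside the Tannakian subcategory generated by $N$. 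Under the Papanikolas period dictionary, any $\mathbb F_q[t]$-linear dependence among these classes modulo trivial classes specializes at $t = \theta$ to a $K$-linear dependence among $\pi_\rho^n, \bu_{1,n}, \ldots, \bu_{m,n}$. The $K$-linear independence hypothesis therefore forces the classes to be independent, whence $\dim V = m$ and $\dim \Gamma_M = m+1$. Papanikolas' theorem then delivers transcendence degree $m+1$ for the period matrix of $M$, which gives the claimed algebraic independence.

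The main obstacle is the Hardouin step: one must precisely identify the ``trivial'' subspace of $\mathrm{Ext}^1(\mathbf{1}, N)$ (namely, those extensions whose last-coordinate period lies in $K \cdot \pi_\rho^n$), verify that Hardouin's criterion applies to the Tannakian subcategory generated by $N$ together with the $M_{\bu_i}$, and match the $\End(N)$-module structure on extension classes with the $K$-linear structure on specialized periods. This technical translation between Ext classes and periods is where the elliptic-curve situation differs most substantively from the Carlitz case, and it is exactly where the $K$-linear independence hypothesis is leveraged. Once this dictionary is in place the Papanikolas transcendence conclusion is automatic.
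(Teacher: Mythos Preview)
Your overall strategy is correct and matches the paper's: translate $K$-linear independence of $\pi_\rho^n,\mathbf{u}_{1,n},\ldots,\mathbf{u}_{m,n}$ into $\End(N)$-linear independence of the extension classes $[\mathcal{X}_{\mathbf v_i}]\in\mathrm{Ext}^1(\mathbf 1,N)$ (this is Proposition~\ref{proposition: Ext}), apply Hardouin's corollary to compute the unipotent radical, and finish with Papanikolas' theorem. However, your dimension bookkeeping is systematically off because you treat $N=\mathcal{X}_\rho^{\otimes n}$ as a rank-$1$ object, as in the Carlitz case. In the elliptic setting $N$ is a rank-$2$ $t$-motive over $\mathbb F_q(t)$, its Galois group is $\mathrm{Res}_{K/\mathbb F_q(t)}\mathbb G_{m,K}$ of dimension~$2$ (Proposition~\ref{proposition: Galois group}), and Hardouin's corollary gives $R_u(\Gamma_M)\cong\omega(N)^m\cong\mathbb G_a^{2m}$, not $\mathbb G_a^m$. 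So $\dim\Gamma_M=2m+2$, not $m+1$.

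This is not merely cosmetic. With the correct count, Papanikolas' theorem tells you $\mathrm{tr.deg}_{\overline K}\,\overline K(\Psi(\theta))=2m+2$, where the period field is generated by the $4$ entries of $(\Psi_\rho^{\otimes n})^{-1}(\theta)$ together with the $2m$ entries $(g_{\mathbf v_i}\Psi_\rho^{\otimes n})(\theta)$. You then need the observation (see Remark~\ref{remark: periods2} and the lemma in Section~\ref{sec: period calculations}) that this field equals $\overline K(\pi_\rho^n,Z,\mathbf u_{1,n},Z_1,\ldots,\mathbf u_{m,n},Z_m)$ for certain auxiliary quantities $Z,Z_i$; since these $2m+2$ generators have transcendence degree $2m+2$, they are algebraically independent, and hence so is the subset $\{\pi_\rho^n,\mathbf u_{1,n},\ldots,\mathbf u_{m,n}\}$. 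Your sketch (``$\Psi_N$ recovers $\pi_\rho^n$, $h_i$ recovers $\mathbf u_{i,n}$'') implicitly assumes the period matrix has only $m+1$ relevant entries, which is false here. Also, your amalgamation $M_{\mathbf u_1}\times_{\mathbf 1}\cdots\times_{\mathbf 1}M_{\mathbf u_m}$ is an extension of $\mathbf 1$ by $N^m$, not of $\mathbf 1^m$ by $N$; the paper simply uses the direct sum $\bigoplus_i\mathcal{X}_{\mathbf v_i}$, which generates the same Tannakian subcategory.
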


Let us sketch our proof and highlight the advances beyond \cite{CY07}. 
\begin{itemize}
\item Since we wish to apply the transcendence method of Papanikolas \cite{Pap08} (see Section \ref{sec:Papanikolas} for a summary), we will consider the $\Fq[\theta]$-modules induced by tensor powers of Drinfeld modules, still denoted by $\rho\on$ (see Section \ref{sec: background}). 

\item In Section \ref{sec: tensor powers and motives}, we construct $t$-motives attached to $\rho\on$ and compute their Galois group. Later these properties allow us to apply Hardouin's work \cite{Har11} (see Section \ref{sec:Hardouin} for a summary) which will give us the dimension of the motivic Galois group associated to the $t$-motive.

\item In Sections \ref{section: log rho} and \ref{section: log rho n}, we construct $t$-motives attached to logarithms of $\rho\on$. Our construction uses Anderson's generating functions as in \cite{CP11} instead of polygarithms used by Chang and Yu. This allows us to bypass the convergence issues of polygarithms present in \cite{CY07}. 

\item In Sections \ref{sec: period calculations} and \ref{sec: period calculations HJ}, we present two different ways to compute periods: either by direct calculations or by using a more conceptual method due to Anderson (see \cite{HJ16}, Section 5). 

\item In Section \ref{sec: Galois groups}, we compute explicitly the Galois groups of $t$-motives attached to logarithms and derive an application about algebraic independence of logarithms (see Theorem \ref{theorem: algebraic independence log}). Our calculations are completely different from all aforementioned works (e.g. \cite{CY07,CP11}) and based on a more robust method devised by Hardouin \cite{Har11}. 

\item In Section \ref{sec: Anderson zeta values}, we use a generalization of Anderson-Thakur's theorem on elliptic curves (see Theorem \ref{theorem: Anderson-Thakur}) to construct the zeta $t$-motives attached to Anderson's zeta values. Using results from Section \ref{section: log motives}, we apply the strategy of Chang-Yu to determine all algebraic relations among Anderson's zeta values (see Theorems \ref{theorem:main1} and \ref{theorem:main2}).

\item In Section \ref{sec: Goss zeta values}, we derive all algebraic relations among Goss's zeta values from those among Anderson's zeta values (see Theorem \ref{theorem: Goss zeta values} and Corollary \ref{cor: Goss zeta values}).
\end{itemize}

To summarize, we have solved completely the problem of determining all algebraic relations among Goss's zeta values on elliptic curves. Although we work on elliptic curves and make use of their group law, we have developed a general approach and expect to extend our work to a general base ring in future work. 

\bigskip

\noindent {\bf Acknowledgements.} The second author (T. ND.) was partially supported by CNRS IEA "Arithmetic and Galois extensions of function fields"; the ANR Grant COLOSS ANR-19-CE40-0015-02 and the Labex MILYON ANR-10-LABX-0070. 


\section{Background} \label{sec: background}
Traditionally, proofs in transcendental number theory tend to be quite eclectic; they pull from numerous disparate areas of mathematics.  Such is the case in this paper.  To ease the burden on the reader, we collect here a review of the various theories on which the proofs of our main theorems rely.  This review is not intended to be exhaustive and we refer the reader to various sources listed in each section.  After laying out the general notation (Section \ref{S:Notation}), we give a review of Anderson $\bA$-modules (Section \ref{S:Anderson modules}), Tensor powers of sign-normalized rank 1 Drinfeld-Hayes modules (Section \ref{S:Review of Tensor Powers}), Anderson-Thakur's theorem on zeta values and logarithms (Section \ref{S:And-Thak Theorem Gen}), linear independence of Anderson's zeta values (Section \ref{S:Linear Relations}), Papanikolas's theory on Tannakian categories and motivic Galois groups (Section \ref{sec:Papanikolas}) and Hardouin's theory on computing motivic Galois groups via the unipotent radical (Section \ref{sec:Hardouin}).
\subsection{Notation}\label{S:Notation} ${}$\par

We keep the notation of \cite{Gre17a, Gre17b, GP18} and work on elliptic curves. Throughout this paper, let $\Fq$ be a finite field of characteristic $p$, having $q$ elements. Let $X$ be an elliptic curve defined over $\Fq$ given by 
	\[ y^2+c_1ty+c_3y=t^3+c_2t^2+c_4t+c_6, \quad c_i \in \Fq. \]
It is equipped with the rational point $\infty \in X(\Fq)$ at infinity. We set $\Ab=\Fq[t,y]$ the affine coordinate ring of $X$ which is the set of functions on $X$ regular outside $\infty$ and $\Kb=\Fq(t,y)$ its fraction field. We also fix other variables $\theta, \eta$ so that $A=\Fq[\theta,\eta]$ and $K=\Fq(\theta,\eta)$ are isomorphic to $\Ab$ and $\Kb$. We denote the canonical isomorphisms $\chi: K \lra \Kb$ and $\iota: \Kb \lra K$ such that $\chi(\theta)=t$ and $\chi(\eta)=y$.

The $\infty$-adic completion $K_\infty$ of $K$ is equipped with the normalized $\infty$-adic valuation $v_\infty:K_\infty \rightarrow \mathbb Z\cup\{+\infty\}$ and has residue field $\Fq$.  We set $\deg:=-v_\infty$ so that $\deg \theta=2$ and $\deg \eta=3$. The completion $\bC_\infty$ of a fixed algebraic closure $\overline K_\infty$ of $K_\infty$ comes with a unique valuation extending $v_\infty$ which will still be denoted by $v_\infty$. We define the Frobenius $\tau: \bC_\infty \rightarrow \bC_\infty$ as the $\Fq$-algebra homomorphism which sends $x$ to $x^q$. Similarly, we can define $\Kb_\infty$ equipped with $v_\infty$ and $\deg$.

We set $\Xi=(\theta,\eta)$ which is a $K$-rational point of the elliptic curve $X$. We define a sign function $\text{\bf sgn}:\Kb_\infty^\times \rightarrow \Fq^\times$ as follows. For any $a \in \Ab$, there is a unique way to write 
	\[ a=\sum_{i \geq 0} a_i t^i + \sum_{i \geq 0} b_i t^i y, \quad a_i,b_i \in \Fq. \]
Recall that $\deg t=2$ and $\deg y=3$. The sign of $a$ is defined to the coefficient of the term of highest degree. It is easy to see that it extends to a group homomorphism 
	\[ \text{\bf sgn}:\Kb_\infty^\times \rightarrow \Fq^\times. \]
Similarly, we can define the sign function 
	\[ \sgn:K_\infty^\times \rightarrow \Fq^\times. \]

For any field extension $L/\F_q$, the coordinate ring of $E$ over $L$ is $L[t,y] = L\otimes_{\F_q} \bA$.  We extend the sign function to such rings $L[t,y] $, and using the same notion of leading term:
\[\tsgn:L(t,y)^\times \to L^\times,\]
which extends the function $\sgn$ on $\bK^\times$.
	
\subsection{Anderson $\bA$-modules on elliptic curves}\label{S:Anderson modules}  ${}$\par

We briefly review the basic theory of Anderson $\bA$-modules and dual $\bA$-motives and the relation between them.  This material follows closely to \cite[\S 3-4]{Gre17a} and the reader is directed there for proofs.

For $R$ a ring of characteristic $p$, we let $R[\tau]$ denote the (non-commutative) skew-polynomial ring with coefficients in $R$, subject to the relation for $r\in R$,
\[\tau r = r^q\tau.\]
We similarly define $R[\sigma]$, but subject to the restriction that $R$ must be an algebraically closed field and subject to the relation
\[\sigma r = r^{1/q} \sigma.\]

We define the Frobenius twist on $L[t,y]$ by setting for $g = \sum c_{j,k} t^jy^k\in  L[t,y]$, 
\begin{equation}\label{twistdef1}
g\twist = \sum c_{j,k}^q t^jy^k.
\end{equation}
The $i$th Frobenius twist is obtained by applying $i$ times the Frobenius twist. We extend twisting to matrices $\Mat_{i\times j}(L[t,y])$ by twisting coordinatewise.  We also define Frobenius twisting on points $P\in X$, also denoted by $P\twisti$, to be the the $i$th iteration of the $q$-power Frobenius isogeny.  We extend this to formal sums of points of $X$ in the natural way.  

\begin{definition}
1) An $n$-dimensional Anderson $\bA$-module is an $\F_q$-algebra homomorphism $E:\bA\to \Mat_n(\overline K_\infty)[\tau]$, such that for each $a\in \bA$, 
\[E_a = d[a] + A_1 \tau + \dots ,\quad A_i \in \Mat_n(\overline K_\infty)\]
where $d[a] = \iota(a)I + N$ for some nilpotent matrix $N\in \Mat_n(\overline K_\infty)$ (depending on $a$).  

2) A Drinfeld module is a one dimensional Andersion $\bA$-module $\rho:\bA \to \overline K_\infty[\tau]$. 
\end{definition}
We note that the map $a\mapsto d[a]$ is a ring homomorphism.

Let $E$ be an $\bA$-Anderson module of dimension $n$. We introduce the exponential and logarithm function attached to $E$, denoted $\Exp_E$ and $\Log_E$, respectively.  The exponential function is the unique function on $\C_\infty^n$ such that as an $\F_q$-linear power series we can write
\begin{equation}\label{expdef}
\Exp _E(\bz) = \sum_{i=0}^\infty Q_i \bz\twisti,\quad Q_i\in \Mat_{n}(\C_\infty), \bz\in \C_\infty^n,
\end{equation}
with $Q_0 = I_n$ and such that for all $a\in \bA$ and $\bz\in \C_\infty^n$,
\begin{equation}\label{expfunctionalequation}
\Exp_E(d[a]\bz) = E_a(\Exp_E(\bz)).
\end{equation}

The function $\Log_E$ is then defined as the formal power series inverse of $\Exp_E$.  We denote its power series as
\[
\Log _E(\bz) = \sum_{i=0}^\infty P_i \bz\twisti,\quad P_i\in \Mat_{n}(\C_\infty), \bz\in \C_\infty^n.
\]
We note that as functions on $\C_\infty^n$ the function $\Exp_E$ is everywhere convergent, whereas $\Log_E$ has some finite radius of convergence

We briefly set out some notation regarding points and divisors on the elliptic curve $X$.  We will denote addition of points on $X$ by adding the points without parenthesis, for example for $R_1,R_2 \in X$
\[R_1 + R_2 \in X,\]
and we will denote formal sums of divisors involving points on $X$ using the points inside parenthesis, for example, for $g \in K(t,y)$,
\[\divisor(g) = (R_1) - (R_2).\]
Further, multiplication on the curve $X$ will be denoted with square brackets, for example
\[[2]R_1 \in X,\]
whereas formal multiplication of points in a divisor will be denoted with simply a number where possible, or by an expression inside parenthesis, for example, for $h \in K(t,y)$,
\[\divisor(h) = 3(R_1) - (n+2)(R_2).\]

\subsection{Tensor powers of Drinfeld-Hayes modules on elliptic curves}\label{S:Review of Tensor Powers}  ${}$\par

We now construct the standard rank 1 sign-normalized Drinfeld module to which we will attach zeta values (see \cite{GP18} for a detailed account). For a general curve, we refer the interested reader to Hayes' work \cite{Hay79,Hay92}, see also \cite{And96,ANDTR17a,Thakur93} or \cite{Gos96}, Chapter 7 for more details on sign-normalized rank one Drinfeld modules.

By definition,  a rank 1 sign-normalized Drinfeld module is a Drinfeld module $\rho:\bA \to \overline K_\infty[\tau]$ such that for $a\in \bA$, we have
\[\rho_a = \iota(a) + a_1\tau + \dots + \text{\bf sgn}(a) \tau^{\deg(a)}.\]

Let $H\subset K_\infty$ be the Hilbert class field of $A$, i.e. the maximal abelian everywhere unramified extension of $K$ in which $\infty$ splits completely. There exists a unique point $V \in X(H)$ whose coordinates have positive degree, called the Drinfeld divisor (it is just a point in this situation), which verifies the equation on $X$
\[V - V\twist = \Xi.\]
We stress that $V$ is chosen to be in the formal group of $X$ at $\infty$, and in this way $V$ is uniquely determined. We recall the fact that a divisor on $X$ is principal if and only if the sum of the divisor is trivial on $X$ and the divisor has degree 0 (see \cite[Cor.~III.3.5]{Silverman}).  Thus we conclude that the divisor $(\Xi)+(V\twist) - (V) - (\infty)$ is principal and we denote the function with that divisor $f \in H(t,y)$, normalized so that $\tsgn(f)=1$, and call this the shtuka function associated to $X$ i.e.
\begin{equation}\label{D:fdiv}
\divisor(f) = (\Xi)+(V\twist) - (V) - (\infty).
\end{equation} 
We will denote the denominator and numerator of the shtuka function as
\begin{equation} \label{D:fdef}
  f := \frac{\nu(t,y)}{\delta(t)} := \frac{y - \eta - m(t-\theta)}{t-\alpha},
\end{equation}
where $m\in H$ is the slope on $E$ (in the sense of \cite[p. 53]{Silverman}) between the collinear points $V\twist, -V$ and $\Xi$, and $\deg(m) = q$, and
\begin{gather} \label{D:deltadiv}
\divisor(\nu) = (V\twist) + (-V) + (\Xi) - 3(\infty), \quad \divisor(\delta) = (V) + (-V) - 2(\infty).
\end{gather}

\begin{definition}
1) An abelian $\bA$-motive is a $\overline K[t,y,\tau]$-module $M$ which is a finitely generated projective $\overline K[t,y]$-module and free finitely generated $\overline K[\tau]$-module such that for $\ell\gg 0$ we have
	\[(t-\theta)^\ell(M/\tau M) = \{0\},\quad (y-\eta)^\ell(M/\tau M) = \{0\}.\]

2) An $\bA$-finite dual $\bA$-motive is a $\overline K[t,y,\sigma]$-module $N$ which is a finitely generated projective $\overline K[t,y]$-module and free finitely generated $\overline K[\sigma]$-module such that for $\ell\gg 0$ we have
	\[(t-\theta)^\ell(N/\sigma N) = \{0\},\quad (y-\eta)^\ell(N/\sigma N) = \{0\}.\]
Note that our definitions here are in line with \cite[\S 4.4]{BP16}, rather than the more general definition given in \cite[Def. 4.1]{HJ16}.
\end{definition}

We then let $U = \Spec \overline K[t,y]$, i.e. the affine curve $(\overline K \times_{\F_q} X) \setminus \{ \infty \}$. The (geometric) $\bA$-motive associated to $\rho$ is given by
\[
  M_1 = \Gamma(U, \cO_{X}(V))  = \bigcup_{i \geq 0} \cL((V) + i(\infty)),
\]
where $\cL((V) + i(\infty))$ is the $\overline K$-vector space of functions $g$ on $X$ with $\divisor(g) \geq -(V) - i(\infty)$.  We make $M_1$ into a left $\overline K[t,y,\tau]$-module by letting $\tau$ act by
\[
  \tau g = f g^{(1)}, \quad g \in M_1,
\]
and letting $\overline K[t,y]$ act by left multiplication.

The (geometric) dual $\bA$-motive associated to $\rho$ is given by 
\begin{equation} \label{dualt}
  N_1 = \Gamma\bigl(U, \cO_X(-(V^{(1)}))\bigr) = \bigcup_{i \geq 1} \cL(-(V\twist) + i(\infty)) \subseteq \overline K[t,y],
\end{equation}
where $\cL(-(V\twist) + i(\infty))$ is the $\overline K$-vector space of functions $g$ on $X$ with $\divisor(g) \geq (V\twist) - i(\infty)$.  We make $N_1$ into a left $\overline K[t,y,\sigma]$-module by letting $\sigma$ act by
\[
  \sigma g = f g^{(-1)}, \quad g \in N_1,
\]
and letting $\overline K[t,y]$ act by left multiplication.

We find that $M_1$ and $N_1$ are projective $\overline K[t,y]$-module of rank~$1$, that $M_1$ is as a free $\overline K[\tau]$-module of rank~$1$ and that $N_1$ is as a free $\overline K[\sigma]$-module of rank~$1$ (see \cite[\S 3]{GP18} for proofs of these facts).  A quick check shows that $M_1$ (resp. $N_1$) is indeed an abelian $\bA$-motive  (resp. $\bA$-finite dual $\bA$-motive).

We form the $n$th tensor power of $M_1$ and of $N_1$ and denote these as
\[M_n = M_1\on = M_1\otimes_{\overline K[t,y]} \dots \otimes_{\overline K[t,y]}M_1,\]
\[N_n = N_1\on = N_1\otimes_{\overline K[t,y]} \dots \otimes_{\overline K[t,y]}N_1,\]
with $\tau$ and $\sigma$ action on $a\in M_n$ and $b\in N_n$ given respectively by
\[\tau a = f^n b\twist,\quad \sigma b = f^n b\twistinv.\]

Observe that
\[M_n = \Gamma (U,\cO_X(nV)),\quad N_n \isom \Gamma (U,\cO_X(-nV\twist)),\]
and that $M_n$ (resp. $N_n$) is also an $\bA$-motive (resp. a dual $\bA$-motive).  Again, $M_n$ and $N_n$ are projective $\overline K[t,y]$-modules of rank 1. Further, $M_n$ is a free $\overline K[\tau]$-module of rank $n$ and $N_n$ is a free $\overline K[\sigma]$-module of rank $n$.

We write down convenient bases for $M_n$ and $N_n$ as a free $\overline K[\tau]$- and $\overline K[\sigma]$-modules, respectively.  Define functions $g_i \in M_n$ for $1\leq i\leq n$ with $\tsgn(g_i)=1$ and with divisors
\begin{equation}\label{gidivisor}
\divisor(g_{j}) = -n(V) +  (n-j)(\infty) + (j-1)(\Xi) + ([j-1]V\twist + [n-(j-1)]V),
\end{equation}
and similarly define functions $h_i \in N_n$ for $1\leq i\leq n$, each with $\tsgn(h_i) = 1$ and with divisor
\begin{equation}\label{hidivisor}
\divisor(h_j) = n(V\twist) - (n+j)(\infty) + (j-1)(\Xi) + (-[n - (j - 1)]V\twist-[j-1]V).
\end{equation}
Then we have
\[M_n = \overline K_\infty[\tau]\{g_1,\dots,g_n\},\quad N_n = \overline K_\infty[\sigma]\{h_1,\dots,h_n\}.\]

For $g\in N_n$ with $\deg(g) = mn + b$ with $0\leq b\leq q-1$, we define two maps
\[\delta_0,\delta_1: N_n \to \overline K_\infty^n \]
by writing $g$ in the $\overline K[\sigma]$-basis for $N_n$ described in \eqref{hidivisor},
\begin{equation}\label{gcoefficients}
g  = \sum_{i=0}^{m} \sum_{j=1}^n b_{j,i}\twistk{-i} \sigma^i h_{n-j+1},
\end{equation}
then denoting $\bb_i = (b_{1,i},b_{2,i},\dots,b_{n,i})^\top$, and setting
\begin{equation}\label{D:deltamapsdef}
\delta_0(g) = \bb_0,\quad \delta_1(g) = \bb_0 + \bb_1 + \dots + \bb_m.
\end{equation}

We then observe that $N_n/(\sigma-1)N_n \isom \overline K^n$, that the kernel of $\delta_1$ equals $\sigma-1$, and thus can write the commutative diagram
\begin{equation}\label{Amotivediagram}
\begin{diagram}
N_n/(1-\sigma)N_n &\rTo^{\delta_1} & \overline K^n  \\
\dTo^{a} & & \dTo_{\,\,\rho\on_a} \\
N_n/(1-\sigma)N_n &\rTo^{\delta_1} & \overline K^n
\end{diagram}
\end{equation}
where the left vertical arrow is multiplication by $a$ and the right vertical arrow is the map induced by multiplication by $a$, which we denote by $\rho\on_a$.

\begin{definition}\label{D:rhodef}
By \cite[Prop. 5.6]{HJ16} we know that $\rho\on$ is an Anderson $\bA$-module, which we call the $n$th tensor power of $\rho:= \rho^{\otimes 1}$.  We comment that $\rho\on$ is uniquely determined for $n\geq 1$ by the elliptic curve $X$, and that the category of dual $\bA$-motives is in anti-equivalence with the category of Anderson $\bA$-modules (with suitable conditions on each cateogry: see \cite[Thm. 5.9] {HJ16}).  The $\bA$-module $\rho\on$ is truly the $n$th tensor power of $\rho$ under this equivalence of categories.
\end{definition}

\begin{proposition}\label{P:aifacts}
We recall the following two facts about the functions $g_i$ and $h_i$ from \cite[\S 4]{Gre17a}.
\begin{enumerate}
\item For $1\leq i\leq n$, there exist constants $a_i,b_i\in H$ such that we can write 
\begin{align*}
tg_i &= \theta g_i + a_ig_{i+1} + g_{i+2},\\
th_i &= \theta h_i + b_ih_{i+1} + h_{i+2}.
\end{align*}
\item For the constants defined in \textnormal{(1)} we have $a_j = b_{n-j}$ for $1\leq j\leq n-1$ and $a_n = b_n^q.$
\end{enumerate}
\end{proposition}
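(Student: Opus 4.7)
The strategy for part (1) is divisor arithmetic: the $n$ functions $g_1,\dots,g_n$ form a $\overline{K}$-basis of $\cL(n(V))$ (by Riemann-Roch, since their orders at $\infty$ are the distinct values $n-1,n-2,\dots,0$, and $\dim_{\overline{K}}\cL(n(V))=n$), and this basis is adapted to filtrations by vanishing order at both $\infty$ and $\Xi$. Since $\divisor(t-\theta)=(\Xi)+(-\Xi)-2(\infty)$ and $g_i$ vanishes to order $n-i\geq 2$ at $\infty$ whenever $i\leq n-2$, the product $(t-\theta)g_i$ lies in $\cL(n(V))$ and vanishes to order exactly $i$ at $\Xi$ and exactly $n-i-2$ at $\infty$. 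The subspace of $\cL(n(V))$ vanishing to order $\geq i$ at $\Xi$ is spanned by $g_{i+1},\dots,g_n$, while the subspace vanishing to order $\geq n-i-2$ at $\infty$ is spanned by $g_1,\dots,g_{i+2}$; intersecting these constraints forces $(t-\theta)g_i \in \overline{K}g_{i+1}\oplus \overline{K}g_{i+2}$. Matching the leading term at $\infty$, using $\tsgn(t-\theta)=\tsgn(g_i)=\tsgn(g_{i+2})=1$, fixes the coefficient of $g_{i+2}$ to be $1$, and we define $a_i$ as the coefficient of $g_{i+1}$. For $i=n-1,n$ the same argument is carried out inside $\cL(n(V)+(\infty))$ or $\cL(n(V)+2(\infty))$; a short divisor calculation using \eqref{D:fdiv} identifies the extended basis elements $g_{n+k}$ with $\tau g_k$ for $k=1,2$ via $\tau g = f^n g\twist$. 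An entirely parallel argument on $N_n$, working in spaces $\cL(-n(V\twist)+k(\infty))$ with the dual filtration, yields the recurrence for $h_i$ with constants $b_i\in H$.

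For part (2), the goal is to introduce a $\overline{K}$-bilinear pairing $\langle\cdot,\cdot\rangle\colon M_n\times N_n\to \overline{K}$, for instance as a residue at $\infty$ of $g\cdot h\cdot\omega$ for a suitable differential $\omega$. The motivation is the divisor computation showing that $\divisor(g_i h_{n-i+1})$ simplifies to the principal divisor
\[ -n(V)+n(V\twist)-(n+1)(\infty)+(n-1)(\Xi)+(P)+(-P), \quad P=[i-1]V\twist+[n-i+1]V, \]
which one checks sums to $0$ on $X$ using the defining relation $V-V\twist=\Xi$, independently of $i$ (up to normalization). This allows us to arrange $\langle g_j,h_k\rangle = \delta_{k,n-j+1}$. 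Because multiplication by $t$ commutes with $\tau$ on $M_n$ and with $\sigma$ on $N_n$, the pairing is $t$-compatible: $\langle tg,h\rangle=\langle g,th\rangle$. Substituting the recurrences from part (1) on both sides and comparing the single surviving off-diagonal coefficient (at $k=n-i$) immediately yields $a_i=b_{n-i}$ for $1\leq i\leq n-1$.

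The main obstacle is the Frobenius-twist relation $a_n=b_n^q$ at the boundary. In this case the recurrence $tg_n = \theta g_n + a_n g_{n+1} + g_{n+2}$ expresses $g_{n+1},g_{n+2}$ as $\tau g_1,\tau g_2$, while the corresponding recurrence for $h_n$ introduces $\sigma$-shifted terms on the dual side. The pairing does not intertwine $\tau$ and $\sigma$ on the nose; the interaction is controlled by the shtuka function $f$ (through $\tau g = f^n g\twist$ and $\sigma h = f^n h\twistinv$) and effectively produces a Frobenius twist when one shifts across the boundary. Carefully tracking this twist through the case $i=n$ is what converts $b_n$ into $b_n^q$, and doing this cleanly is the delicate technical point of the proof.
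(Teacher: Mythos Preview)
The paper does not give its own proof of this proposition; it is recalled verbatim from \cite[\S 4]{Gre17a}, so there is nothing in the present paper to compare your argument against.  I will therefore assess the proposal on its own merits.

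Your treatment of part~(1) is correct and is the standard argument: Riemann--Roch gives $\dim\cL(n(V))=n$, the $g_j$ are linearly independent by their distinct orders at $\infty$, and the two filtration constraints (vanishing order at $\Xi$ and at $\infty$) pin $(t-\theta)g_i$ down to the two-dimensional span $\overline K g_{i+1}\oplus\overline K g_{i+2}$, with the sign normalisation fixing the coefficient of $g_{i+2}$.  The identification $g_{n+k}=\tau g_k$ via the divisor of $f^n$ is also right.

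Part~(2) has a genuine gap.  First, a minor point: the $t$-compatibility $\langle tg,h\rangle=\langle g,th\rangle$ is trivial for any pairing built from the product $g\cdot h$ as functions, since $(tg)h=g(th)$; it has nothing to do with $\tau$ or $\sigma$ commuting with $t$.  More seriously, you assert that a residue pairing $\langle g,h\rangle=\Res_\infty(g\,h\,\omega)$ can be ``arranged'' to satisfy $\langle g_j,h_k\rangle=\delta_{k,n-j+1}$, but you never specify $\omega$ and the claim does not hold for the obvious candidates.  The product $g_j h_k$ has a pole of order $j+k$ at $\infty$, so if $\omega$ is chosen to make the pairing nonzero on the anti-diagonal $j+k=n+1$, then for $j+k>n+1$ the differential $g_j h_k\,\omega$ has a higher-order pole at $\infty$, and its residue is generally \emph{not} zero.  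At best you obtain a triangular pairing, not a diagonal one; with a triangular pairing the equation you derive from $\langle tg_i,h_{n-i}\rangle=\langle g_i,th_{n-i}\rangle$ contains extra off-diagonal terms $\langle g_{i+2},h_{n-i}\rangle$ and $\langle g_i,h_{n-i+2}\rangle$ that you must show cancel.  (Alternatively, if you correct $\omega$ by $\delta^n$ so that the only pole of $g_j h_k\,\omega$ is at $\infty$, the residue theorem forces the pairing to vanish identically.)  To make this route work you must either exhibit an $\omega$ for which diagonality can be proved directly, or show that the sub-leading residues agree --- and this last step is exactly where the content of the identity $a_j=b_{n-j}$ lives.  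The boundary relation $a_n=b_n^q$ is, as you say, the delicate point, but your sketch does not yet explain how the Frobenius twist actually emerges from the pairing and the $\tau$/$\sigma$ actions.
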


We can write down the matrices defining $\rho_t\on$ using the coefficients $a_i\in H$ from Proposition \ref{P:aifacts},
\begin{equation}\label{taction}
\rho\on_t := d[\theta] + E_\theta \tau :=
\left (\begin{matrix}
\theta & a_1 & 1  & 0 & \hdots & 0 & 0 & 0 \\
0 & \theta & a_2  & 1 & \hdots & 0 & 0 & 0 \\
0 & 0 & \theta  & a_3 & \hdots & 0 & 0 & 0 \\
\vdots & \vdots &  \vdots &\vdots & \ddots &  \vdots & \vdots & \vdots \\
0 & 0 & 0  & 0 & \hdots & \theta & a_{n-2} & 1\\
0 & 0 & 0  & 0 & \hdots & 0 & \theta & a_{n-1}\\
0 & 0 & 0  & 0 & \hdots & 0 & 0 & \theta
\end{matrix}\right ) + 
\left (\begin{matrix}
0 & 0 & 0 & \hdots & 0 \\
\vdots & \vdots &  \vdots & \ddots & \vdots \\
0 & 0 & 0 & \hdots & 0\\
1 & 0 & 0 & \hdots & 0\\
a_n & 1 & 0 & \hdots & 0
\end{matrix}\right )\tau
\end{equation}

The logarithm and exponential functions associated to $\rho\on$ will be denoted $\Log_\rho\on$ and $\Exp_\rho\on$ respectively, and the period lattice of $\Exp_\rho\on$ will be denoted by $\Lambda_\rho\on$

We may consider $\rho\on$ to be a $t$-module by forgetting the action of $y$.  When it is necessary to distinguish $\rho\on$ as a $t$-module from $\rho\on$ as an $\bA$-module, we shall denote the $t$-module as $\hat \rho\on$.  When there is no confusion, we will drop the hat notation.

In \cite[Thm 5.4 and Cor. 5.6]{Gre17b} the following result is proved.
\begin{theorem}\label{T:LogCoef}
For $\bz\in \C_\infty^n$ inside the radius of convegence of $\Log_\rho\on$, if we write
\[
\Log_\rho\on(\bz) = \sum_{i=0}^\infty P_i \bz\twisti,
\]
for $n\geq 2$, then for $\lambda = \frac{dt}{2y + c_1t + c_3}$ the invariant differential on $X$,
\begin{equation}\label{Piresidue}
P_i = \left \langle \Res_\Xi\left (\frac{g_j h_{n-k+1}\twisti}{(ff\twist \dots f\twisti)^n}\lambda\right )\right  \rangle_{1\leq j,k\leq n}
\end{equation}
and $P_i \in \Mat_n(H)$ for $i\geq 0$.  Further, the bottom row of $P_i$ can be written as
\begin{equation}\label{bottomrow}
\left \langle \frac{h_{n-k+1}\twisti}{h_1(f\twist \dots f\twisti)^n}\bigg|_\Xi \right \rangle_{1\leq k\leq n}.
\end{equation}
\end{theorem}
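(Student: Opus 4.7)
The plan is to identify a functional equation satisfied by the coefficients $(P_i)$ of $\Log_{\rho^{\otimes n}}$ and verify that the proposed residue formula satisfies the same recursion with matching initial data. Since $\Log_{\rho^{\otimes n}}$ is characterized, on a neighborhood of $0$, as the formal inverse of $\Exp_{\rho^{\otimes n}}$, it obeys the functional equation
\[
\Log_{\rho^{\otimes n}}\bigl(\rho^{\otimes n}_t(\bz)\bigr) = d[\theta]\,\Log_{\rho^{\otimes n}}(\bz),
\]
and substituting $\rho^{\otimes n}_t = d[\theta] + E_\theta \tau$ from \eqref{taction} and comparing coefficients of $\bz^{(i)}$ yields a linear recursion expressing $d[\theta]P_i - P_i d[\theta]^{(i)}$ in terms of $P_{i-1}$ and the matrix $E_\theta^{(i-1)}$. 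In particular the sequence $(P_i)$ is completely determined by $P_0 = I_n$ together with this recursion.

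Next I would verify that the matrix of residues
\[
R_i := \left\langle \Res_\Xi\!\left(\frac{g_j\, h_{n-k+1}^{(i)}}{(ff^{(1)}\cdots f^{(i)})^n}\,\lambda\right)\right\rangle_{1\leq j,k\leq n}
\]
satisfies the same recursion. The key tools are Proposition \ref{P:aifacts}, which gives the three-term recurrences for $tg_j$ and $th_j$, together with the shtuka equation \eqref{D:fdiv}, namely that $f$ has a simple zero at $\Xi$. Multiplying the integrand by $t-\theta$ lets one rewrite $(t-\theta)g_j$ using the recurrence in the $g$'s, and $(t-\theta)h_{n-k+1}^{(i)}$ using the twisted recurrence for the $h$'s; the twists $b_j^{(i)}$ convert to the $a_\ell$'s via part (2) of Proposition \ref{P:aifacts}. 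The factor of $f^{(i)}$ in the denominator absorbs one power of $(t-\theta)$ via the shtuka relation after a change of variables, shifting the index $i \mapsto i-1$. Matching bookkeeping produces precisely the entries $(d[\theta]R_i - R_i d[\theta]^{(i)})_{j,k}$ against $(E_\theta^{(i-1)} R_{i-1})_{j,k}$.

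The base case $P_0 = I_n$ reduces to computing $\Res_\Xi(g_j h_{n-k+1}\lambda/f^n)$; using the divisors \eqref{D:fdiv}, \eqref{gidivisor} and \eqref{hidivisor} one checks that the integrand has a simple pole at $\Xi$ exactly when $j=k$ (otherwise the order of vanishing at $\Xi$ forces the residue to vanish), and the normalization $\tsgn(g_i)=\tsgn(h_i)=1$ together with the Weierstrass form of $\lambda$ pin the residue down to $1$. That $P_i\in \Mat_n(H)$ is then automatic: $f$, $g_j$, $h_k$ and $\Xi$ are all defined over $H$, and the residue at an $H$-rational point of an $H$-rational differential on $X$ lies in $H$.

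For the bottom row I would specialize $j=n$. By \eqref{gidivisor} the divisor of $g_n$ has no pole at $\infty$ and its zero locus at $\Xi$ is absent, so after writing $g_n = h_1 \cdot (g_n/h_1)$ and using \eqref{hidivisor} one verifies that $g_n/(h_1 f^n)$ is regular at $\Xi$ (its pole at $V$ is cancelled by the zero of $f^n$ at $V^{(1)}$ after one twist, and successive twists propagate this cancellation), so that the residue at $\Xi$ of $g_n h_{n-k+1}^{(i)} \lambda / (ff^{(1)}\cdots f^{(i)})^n$ equals the evaluation at $\Xi$ of the regular function $h_{n-k+1}^{(i)}/(h_1 (f^{(1)}\cdots f^{(i)})^n)$, as claimed. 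The main obstacle is the residue bookkeeping in the recursion step, where one has to carefully track how the three-term recurrences for $g$ and $h$, the twist shift $h^{(i)}\mapsto h^{(i+1)}$, and the shtuka factorization of $f$ at $\Xi$ combine; once this is set up correctly, the recursion matches termwise.
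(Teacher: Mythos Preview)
This theorem is not proved in the present paper; it is quoted verbatim from \cite[Thm.~5.4 and Cor.~5.6]{Gre17b}, so there is no in-paper proof to compare against. I can still comment on the proposal on its own merits.

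Your overall strategy---derive the recursion
\[
d[\theta]P_i - P_i\,d[\theta]^{(i)} \;=\; P_{i-1}\,E_\theta^{(i-1)}
\]
from the functional equation, note that the left-hand operator is invertible for $i\geq 1$ (the eigenvalues are $\theta-\theta^{q^i}\neq 0$), and then verify that the residue matrices $R_i$ satisfy the same recursion with $R_0=I_n$---is sound in outline. But each of the three verification steps is more delicate than your sketch suggests.

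\emph{Base case.} The assertion that the $\tsgn$ normalization forces $\Res_\Xi(g_j h_{n-j+1}\lambda/f^n)=1$ is not justified: $\tsgn$ is a leading-coefficient condition at $\infty$, not a condition at $\Xi$. One really needs a separate duality-type identity relating the $g_j$ and $h_{n-j+1}$ at $\Xi$; in \cite{Gre17a,Gre17b} this comes from the explicit pairing between $M_n$ and $N_n$.

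\emph{Bottom row.} Your claim that $g_n/(h_1 f^n)$ is regular at $\Xi$ is false. From \eqref{gidivisor}, \eqref{hidivisor}, and \eqref{D:fdiv}, the orders at $\Xi$ are $n-1$ for $g_n$, $0$ for $h_1$, and $n$ for $f^n$, so $g_n/(h_1 f^n)$ has a \emph{simple pole} at $\Xi$. The correct reduction is that, since $h_{n-k+1}^{(i)}$ and $f^{(1)},\dots,f^{(i)}$ are regular and nonvanishing at $\Xi$ for $i\geq 1$, one has
\[
\Res_\Xi\!\left(\frac{g_n\,h_{n-k+1}^{(i)}}{(ff^{(1)}\cdots f^{(i)})^n}\,\lambda\right)
= \frac{h_{n-k+1}^{(i)}(\Xi)}{(f^{(1)}\cdots f^{(i)})^n(\Xi)}\cdot \Res_\Xi\!\left(\frac{g_n}{f^n}\,\lambda\right),
\]
and the bottom-row formula \eqref{bottomrow} then amounts to the single identity $\Res_\Xi(g_n\lambda/f^n)=1/h_1(\Xi)$, which is exactly the $j=n$ case of the base-case identity above---still unproved in your sketch.

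\emph{Recursion step.} This is where the actual work hides. The three-term recurrences of Proposition~\ref{P:aifacts} for $tg_j$ and $th_j$ produce indices $j+1,j+2$ that overflow the range $1\leq j\leq n$; handling $g_{n+1},g_{n+2}$ and $h_{n+1}^{(i)},h_{n+2}^{(i)}$ requires invoking the $\tau$- and $\sigma$-module structures ($\tau g = f^n g^{(1)}$ and $\sigma h = f^n h^{(-1)}$) to convert these into twisted basis elements, and then checking that the resulting cross terms assemble into precisely $P_{i-1}E_\theta^{(i-1)}$. Your phrase ``matching bookkeeping produces precisely\dots'' is where the argument would have to be made, and it is not.

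In summary, the skeleton is reasonable, but the base-case residue identity, the correct pole analysis for the bottom row, and the overflow/twist bookkeeping in the recursion are each genuine obligations that the sketch does not discharge.
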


We denote by $\mathbb T$ the Tate algebra in variable $t$ with coefficients in $\bC_\infty$ and by $\mathbb L$ the fraction field of the Tate algebra $\mathbb T$.  We now give a brief review of the functions $\omega_\rho$, $E_\bu\on$ and $G_\bu\on$ defined in \cite[\S 5-6]{Gre17a}.  Let
\begin{equation} \label{omegarhoprod}
  \omega_\rho = \xi^{1/(q-1)} \prod_{i=0}^\infty \frac{\xi^{q^i}}{f^{(i)}} \in \mathbb T[y]^\times,
\end{equation}
where $\xi\in H$ is an explicit constant given in \cite[Thm. 4.6]{GP18}.  Observe that $\omega_\rho$ satisfies the functional equation $\omega_\rho\twist = f \omega_\rho$.  For $\bu = (u_1,...,u_n)^\top \in \C_\infty^n$ define
\begin{equation}\label{Eudef}
E_{\bu}^{\otimes n}(t) = \sum_{i=0}^\infty \Exp_\rho\on\left (d[\theta]^{-i-1} \bu\right ) t^i,
\end{equation}
\begin{equation}\label{Gudef}
G_\bu\on (t,y) = E_{d[\eta]\bu}\on(t) + (y+c_1t + c_3) E_\bu\on(t).
\end{equation}
For $n=1$ and $\bu = u\in \C_\infty$, we will simplify notation by setting $E_u(t):=E_\bu^{\otimes 1}(t)$ and $G_u(t,y):=G_\bu^{\otimes 1}(t,y)$

Define $\cM$ to be the submodule of $\TT[y]$ consisting of all elements in $\TT[y]$ which have a meromorphic continuation to all of $U$.  Now define the map $\RES_{\Xi}:\cM^n\to \C_\infty^n$, for a vector of functions $(z_1,...,z_n)^\top\in \cM^n$ as
\begin{equation}\label{RESdef}
\RES_{\Xi}((z_1,\dots, z_n)^\top) = (\Res_{\Xi}(z_1\lambda),\dots, \Res_{\Xi}(z_n\lambda))^\top
\end{equation}
where $\lambda$ is the invariant differential on $E$.

We define a map $T:\TT[y]\to \TT[y]^n$ by
\begin{equation}\label{Tmapdef}
T( h(t,y)) =
\left (\begin{matrix}
h(t,y)\cdot g_1  \\
h(t,y)\cdot g_2  \\
\vdots  \\
h(t,y)\cdot g_n \\
\end{matrix}\right ).
\end{equation}

\begin{proposition} \label{P:EuReview}
We collect the following facts from \cite[\S 5-6]{Gre17a} about the above functions:
\begin{enumerate}
\item[(a)] The function $E_\bu\on \in \TT^n$ and we have the following identity of functions in $\TT^n$
\[E_\bu\on(t) = \sum_{j=0}^\infty Q_j\left (d[\theta]\twistk{j} -   tI \right )\inv\bu\twistk{j} \]
where $Q_i$ are the coefficients of $\Exp_\rho\on$ from \eqref{expdef}.
\item[(b)] The function $G_\bu\on \in \TT[y]$ and extends to a meromorphic function on $U = (\C_\infty \times_{\F_q} E) \setminus \{ \infty \}$ with poles in each coordinate only at the points $\Xi\twisti$ for $i\geq 0$.
\item[(c)] We have $\RES_\Xi(G_\bu\on) =  -(u_1,\dots,u_n )^\top$.
\item[(d)] If we denote $\Pi_n = -\RES_\Xi(T(\omega_\rho^n))$, then $T(\omega_\rho^n) = G_{\Pi_n}\on$ and the period lattice of $\Exp_\rho\on$ equals $\Lambda_\rho\on = \{d[a]\Pi_n\mid a\in \bA\}$.
\item[(e)] If $\pi_\rho$ is a fundamental period of the exponential function associated to $\rho$, and if we denote the last coordinate of $\Pi_n \in \C_\infty^n$ as $p_n$, then $p_n/\pi_\rho^n \in H$.
\end{enumerate}
\end{proposition}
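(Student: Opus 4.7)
The plan is to verify each part in order, with parts (a)--(c) being essentially formal manipulations of power series in the Tate algebra, while (d)--(e) require the functional equation of $\omega_\rho$ together with the residue pairing that realizes Anderson generating functions.

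For part (a), I would substitute the power series $\Exp_\rho^{\otimes n}(\bz)=\sum_j Q_j \bz^{(j)}$ into the definition \eqref{Eudef} of $E_\bu^{\otimes n}$. Since $d[\theta]$ commutes with Frobenius twist (its entries are in $\F_q[\theta,\eta]$) we can write $d[\theta]^{-i-1}\bu$ under the twist and then swap the order of summation. The double sum reorganizes as
\[
E_\bu^{\otimes n}(t)=\sum_{j\geq 0} Q_j \Bigl(\sum_{i\geq 0} t^i (d[\theta]^{(j)})^{-i-1}\Bigr)\bu^{(j)},
\]
and the inner geometric series sums formally to $(d[\theta]^{(j)}-tI)^{-1}$. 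Convergence in $\mathbb{T}^n$ follows because $\Exp_\rho^{\otimes n}$ is entire, so the entries of $Q_j$ decay fast enough to dominate the $t$-expansion on the closed unit disk. The interchange of summation is then justified by absolute convergence.

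For part (b), the identity from (a) exhibits $E_\bu^{\otimes n}(t)$ as a uniformly convergent sum of rational functions of $t$ whose only poles lie at the zeros of $\det(d[\theta]^{(j)}-tI)$; since $d[\theta]$ has single eigenvalue $\theta$ with nilpotent part, these occur at $t=\theta^{q^j}$. This gives a meromorphic extension to $\mathbb{C}_\infty$. To pass to $G_\bu^{\otimes n}(t,y)$, use the Weierstrass equation of $X$ to see that, near each $\Xi^{(j)}$, the combination $E_{d[\eta]\bu}^{\otimes n}(t)+(y+c_1t+c_3)E_\bu^{\otimes n}(t)$ has its pole in $t$ matched by the fact that $y-\eta^{q^j}$ vanishes at $\Xi^{(j)}$, so the singularities localize at the points $\Xi^{(j)}$ on $X$. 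For part (c), compute the residue at $\Xi=(\theta,\eta)$ directly from the series in (a): only the $j=0$ summand contributes, with pole $(d[\theta]-tI)^{-1}\bu$; pulling back to $X$ via the invariant differential $\lambda$ and using $Q_0=I_n$ gives $\RES_\Xi(G_\bu^{\otimes n})=-\bu$.

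For part (d), the key input is the functional equation $\omega_\rho^{(1)}=f\omega_\rho$, which by definition of the $\tau$-action on $M_n$ forces $T(\omega_\rho^n)$ to satisfy the same Frobenius twist relation as $G_{\Pi_n}^{\otimes n}$ (namely the one coming from $\Exp_\rho^{\otimes n}$ and the functional equation \eqref{expfunctionalequation}). By (c), their residues at $\Xi$ agree by construction of $\Pi_n$; by compatibility with twisting, their residues at every $\Xi^{(j)}$ agree; and the remaining difference is a section of $\mathcal{O}_X$ on $U$ with controlled growth at $\infty$ via the degree bookkeeping from \eqref{gidivisor}, forcing it to vanish. Once $T(\omega_\rho^n)=G_{\Pi_n}^{\otimes n}$ is established, the period lattice description follows since applying $\delta_1$ (or directly using the exponential functional equation) to the identity implies $\Exp_\rho^{\otimes n}(d[a]\Pi_n)=0$ for all $a\in\bA$, and the converse inclusion uses rank-one uniformization of the corresponding $\bA$-motive. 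Finally for (e), combine (d) with the bottom-row formula in Theorem \ref{T:LogCoef}: the last coordinate $p_n$ of $\Pi_n$ is extracted from $\omega_\rho^n$ restricted to $\Xi$ via the basis function $g_n$, which by \eqref{omegarhoprod} expresses $p_n$ as $\pi_\rho^n$ times an explicit product of values of shtuka-function data, all lying in $H$.

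The main obstacle is part (d): identifying $T(\omega_\rho^n)$ with $G_{\Pi_n}^{\otimes n}$ and then extracting the precise period lattice requires carefully tracking poles at all twists $\Xi^{(j)}$ together with the compatibility between the $\tau$-module structure on $M_n$ and the Anderson generating function $G_\bu^{\otimes n}$. Parts (a)--(c) are purely formal; parts (b) and (e) depend on the explicit geometric input already compiled in Section \ref{S:Review of Tensor Powers}.
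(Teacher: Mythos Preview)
The paper does not give a proof of this proposition: it is stated as a compilation of results from \cite[\S 5--6]{Gre17a} and left uncited beyond that reference. So there is no ``paper's own proof'' to compare against; your sketch is effectively an outline of how the arguments in \cite{Gre17a} run, and in broad strokes it matches what is done there.

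One small correction: in part (a) you assert that $d[\theta]$ commutes with Frobenius twisting because its entries lie in $\F_q[\theta,\eta]$. This is false---by Proposition~\ref{P:aifacts} the off-diagonal entries $a_i$ lie in $H$, not in $\F_q$. Fortunately the argument does not need this: what you actually use is the identity $(d[\theta]^{-i-1}\bu)^{(j)} = (d[\theta]^{(j)})^{-i-1}\bu^{(j)}$, which is simply how twisting distributes over matrix products, and this is exactly what appears in your displayed double sum. For part (e), your appeal to Theorem~\ref{T:LogCoef} is a bit indirect, since that result concerns logarithm coefficients rather than periods; the actual argument (see \cite[Thm.~6.7]{Gre17a}) computes the last coordinate of $\Pi_n$ as a residue of $g_n\omega_\rho^n\lambda$ at $\Xi$ and reads off the factor $\pi_\rho^n$ from the pole structure of $\omega_\rho^n$ together with the explicit shtuka-function data, which indeed lands in $H$. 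Otherwise your outline of (a)--(d) is sound.
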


\subsection{A generalization of Anderson-Thakur's theorem on elliptic curves}\label{S:And-Thak Theorem Gen} ${}$\par

Recall that $\rho: A\rightarrow \bC_\infty\{\tau\}$ is the standard sign-normalized rank one Drinfeld module on elliptic curves constructed in the previous section and that $H$ is the Hilbert class field of $A$. Let $B$ (or $\cO_H$) be the integral closure of $A$ in $H$. We denote by $G$ the Galois group ${\rm Gal}(H/K)$. 

We denote by $\mathcal{I}(A)$ the group of fractional ideals of $A$. For $I \in \mathcal I(A)$, we set
\begin{equation} \label{eq: sigma I}
\sigma_I:=(I,H/K) \in G.
\end{equation}

By \cite{Gos96}, Proposition 7.4.2 and Corollary 7.4.9, the subfield of $\mathbb C_\infty$ generated by $K$ and the coefficients of $\rho_a$ is $H$. Furthermore, by \cite{Gos96}, Lemma 7.4.5, we get
	\[ \forall a\in A, \quad \rho_a\in B\{\tau\}. \]
Let $I$ be a nonzero ideal of $A$, we define $\rho_I$ to be the unitary element in  $H\{\tau \}$ such that
	 \[ H\{ \tau \} \rho_I=\sum_{a\in I} H\{\tau\} \rho_a. \]
We have
	 \[ {\ker} \, \rho_I=\bigcap _{a\in I} {\ker}\, \rho_a, \]
	 \[ \rho_I\in B\{\tau\}, \]
	 \[ \deg_\tau \rho_I= \deg I. \]
We write $\rho_I=\rho_{I,0}+\cdots+\rho_{I,\deg I} \tau^{\deg I}$ with $\rho_{I,\deg I}=1$ and denote by $\psi(I) \in B\setminus\{0\}$ the constant coefficient $\rho_{I,0}$ of $\rho_I$. Thus the map $\psi$ extends uniquely into a map $\psi: \mathcal I(A)\rightarrow H^\times$ with the following properties:

1) for all $I, J\in \mathcal I(A), \psi(IJ)= \sigma_J(\psi(I))\, \psi(J)$,

2) for all $I\in \mathcal I(A), IB=\psi(I)B$,

3) for all $x\in K^\times, \psi (xA)= \frac{x}{\sgn (x)}$.

Finally, for $n \in \mathbb N$, we define Anderson's zeta value (also called the equivariant zeta value) at $n$ attached to $\rho$ as follows:
\begin{equation}\label{D:Anderson Zeta Values}
\zeta_\rho(b,n)=\sum_{I \subseteq A} \frac{\sigma_I(b)}{\psi(I)^n} \in K_\infty.
\end{equation}
By the works of Anderson (see \cite{And94}, \cite{And96}), for any $b \in B$, we have 
	\[ \exp_\rho(\zeta_\rho(b,1)) \in B. \]
\begin{remark}
This is an example of log-algebraicity identities for Drinfeld modules. The theory  began with the work of Carlitz \cite{Car35} in which he proved the log-algebraicity identities for the Carlitz module defined over $\Fq[\theta]$. Further examples for Drinfeld modules over $A$ which are PIDs were discovered by Thakur \cite{Tha92}. Shortly after, Anderson proved this above identity for any sign-normalized rank one Drinfeld $A$-module, known as Anderson's log-algebraicity theorem. For alternative proofs of this theorem, we refer the reader to \cite{Thakur93}, Chapter 8 for the $\Fq[\theta]$-case, \cite{GP18} for the case of elliptic curves and \cite{ANDTR17a} for the general case.
\end{remark}


\fantome{
We recall that $f$ is the shtuka function attached to $\rho$ and that for $1 \leq i \leq n$ the $h_i$ form a $\bC_\infty[\sigma]$-basis for the dual $A$-motive associated to $\rho$. Then there exist elements $d_{k,j} \in \overline K$ such that
	\[ \zeta_\rho(b,n)=\sum_{i=0}^\infty \frac{\sum_{j=0}^{\min(i,q+e)} \sum_{k=1}^n d_{k,j}^{(i)} h_{n-k+1}^{(i-j)}} {C_n h_1 (f^{(1)} \cdots f^{(i-j)})^n} \bigg|_\Xi \]
where $C_n \in H$ is an explicit constant depending on $n$. We recall the formula for the coefficients of the logarithm series for $\rho^{\otimes n}$:
	\[ \log_{\rho^{\otimes n}}=\sum_{i \geq 0} P_i \tau^i, \quad P_i \in M_{n \times n}(H). \]
It is shown that the bottom row of $P_i$ is equal to
	\[ \left( \frac{h_{n-k+1}^{(i)}}{h_1 (f^{(1)} \cdots f^{(i)})^n} \bigg|_\Xi \right)_{1 \leq k \leq n}. \]

We recall the notion of Stark units introduced in \cite{ANDTR17a,ATR17} for Drinfeld modules and \cite{ANDTR18a} for Anderson modules. For an $rn$-tuple $\alpha=(a_{1,1},a_{2,1},\ldots,a_{n,1},\ldots,a_{n,r}) \in \bC_\infty^{rn}$, we define
	\[ L_{\alpha,n}=\sum_{i=0}^\infty \frac{\sum_{j=1}^r \sum_{k=1}^n a_{k,j}^{(i)} h_{n-k+1}^{(i-j)}} {(f^{(1)} \cdots f^{(i-j)})^n} \]
where each term with $i-j<0$ is 0. Further we set
	\[ \beta_{\alpha,n}=\sum_{j=1}^r (a_{1,j}^{(j-1)}h_n+\ldots+a_{n,j}^{(j-1)} h_1)^{(-1)}. \]
Then we get
	\[ L_{\alpha,n}^{(-1)} - \frac{1}{f^n} L_{\alpha,n}=\beta_{\alpha,n}. \]

}
	
The following theorem is a generalization of the celebrated Anderson-Thakur theorem for tensor powers of the Carlitz module (see \cite{AT90}, Theorem 3.8.3).

\begin{theorem}[Anglès-Ngo Dac-Tavares Ribeiro \cite{ANDTR19a} for any $n$ and Green \cite{Gre17b} for $n<q$]  \label{theorem: Anderson-Thakur}
Let $n \geq 1$ be an integer. Then there exists a constant $C_n \in H$ such that for $b \in B$, there exists a vector $Z_n(b) \in \bC_\infty^n$ verifying the following properties:

1) We have $\Exp_\rho^{\otimes n}(Z_n(b)) \in H^n$.

2) The last coordinate of $Z_n(b)$ is equal to $C_n \zeta_\rho(b,n)$.
\end{theorem}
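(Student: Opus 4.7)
The plan is to construct $Z_n(b)$ as an explicit $\C_\infty$-vector built from the residues at $\Xi$ of a carefully chosen meromorphic function on $U$, and then read off both properties from the shtuka formalism. The key observation is that Theorem \ref{T:LogCoef} expresses the bottom row of the logarithm coefficients $P_i$ in terms of $h_{n-k+1}^{(i)}/(h_1 (f^{(1)}\cdots f^{(i)})^n)\big|_\Xi$, which matches the shape of the summands in $\zeta_\rho(b,n)$ once ideals are unfolded via the $\psi$-map. So the idea is to produce a formal series $L_b(t,y) \in \TT[y]$ whose evaluation at $\Xi$ reproduces $\zeta_\rho(b,n)$ and which also serves as the image under $T$ of a Tate-algebra function possessing a good functional equation under the Frobenius twist.

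More precisely, I would first introduce, for each $b \in B$, an Eisenstein-like series
\[
L_b := \sum_{I \subseteq A} \frac{\sigma_I(b)}{\psi(I)^n} \cdot (\text{twisted shtuka factor}),
\]
where the twisted factor packages $\omega_\rho^n$ and the basis functions $h_j$ so that $L_b$ lies in $\TT[y]$ and extends meromorphically to $U$ with poles only along the orbit $\{\Xi^{(i)}\}_{i\geq 0}$. Using $\omega_\rho^{(1)} = f\omega_\rho$ and the identity $\psi(IJ)=\sigma_J(\psi(I))\psi(J)$, one verifies the Stark-unit functional equation
\[
L_b^{(-1)} - \frac{1}{f^n} L_b = \beta_b
\]
for an explicit $\beta_b$ with $\beta_b|_\Xi \in H$. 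I would then define $Z_n(b) := -\RES_\Xi\bigl(T(L_b)\bigr) \in \C_\infty^n$, by analogy with Proposition \ref{P:EuReview}(d) where $T(\omega_\rho^n)$ produces the period $\Pi_n$.

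To verify the first property, the functional equation translates under $\RES_\Xi \circ T$ into an identity which forces $\Exp_\rho^{\otimes n}(Z_n(b))$ to equal $\RES_\Xi T(\beta_b)$, i.e., a vector whose entries are residues at $\Xi$ of $H$-rational functions, hence in $H^n$. This is the analogue of Anderson's log-algebraicity pushed up to the $n$th tensor power, and is exactly what \cite{ANDTR19a} proves via Stark units (and what \cite{Gre17b} handles for $n<q$ by direct estimation). For the second property, the bottom entry of $Z_n(b)$ is computed as $-\Res_\Xi(g_n \cdot L_b \cdot \lambda)$; using \eqref{gidivisor} for $g_n$, the residue collapses onto the $h_1$-factor in $L_b$, and unpacking the sum over ideals $I$ produces $\zeta_\rho(b,n)$ multiplied by a universal constant $C_n \in H$ (arising from the leading coefficient of $h_1$ and the normalization of $\omega_\rho$), which is manifestly independent of $b$.

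The main obstacle will be justifying that $L_b$ genuinely extends to a meromorphic function on $U$ and that $\RES_\Xi \circ T$ commutes with the infinite summation defining it. For $n<q$ this is handled by the convergence estimates in \cite{Gre17b}, but the growth of $\omega_\rho^n$ and the denser pole structure for large $n$ force one, in the general case, to adopt the Pellarin $L$-series / Stark unit machinery of \cite{ANDTR19a}: one first works with a deformation parameter, proves the functional equation at the level of a rigid-analytic trivialization, and only at the end specializes to $t=\theta$, $y=\eta$. A secondary subtlety is checking that the single constant $C_n$ truly works for every $b\in B$ simultaneously, which amounts to verifying that the only $b$-dependence enters through the numerator $\sigma_I(b)$ of $L_b$ and not through the normalization of the residue operator.
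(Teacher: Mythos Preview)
The paper does not prove this theorem at all: it is stated as a cited result, attributed to \cite{ANDTR19a} for general $n$ and to \cite{Gre17b} for $n<q$, and used as a black box throughout. So there is no ``paper's own proof'' to compare against.

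Your sketch is a reasonable outline of the strategy in those references---build an Eisenstein-type series over ideals, establish a shtuka/Stark-unit functional equation $L_b^{(-1)} - f^{-n}L_b = \beta_b$, and read off both $\Exp_\rho^{\otimes n}(Z_n(b))\in H^n$ and the last-coordinate identity from residues at $\Xi$---and you correctly identify that the $n<q$ case goes by direct convergence estimates while the general case requires the deformation/Stark-unit machinery. But as written it is a proof \emph{plan}, not a proof: the construction of $L_b$ is left schematic (``twisted shtuka factor''), the functional equation is asserted rather than derived, and you yourself flag the two genuine obstacles (meromorphic extension of $L_b$ to $U$ and commuting $\RES_\Xi\circ T$ with the infinite sum) without resolving them. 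These are exactly the technical hearts of \cite{Gre17b} and \cite{ANDTR19a} respectively, and neither is short. If your goal was to indicate why the theorem is plausible and where the real work lies, you have done that well; if the goal was to supply an independent proof, substantial work remains.
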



\subsection{Linear relations among Anderson's zeta values}\label{S:Linear Relations} ${}$\par

In this section we determine completely linear relations among Anderson's zeta values on elliptic curves. In the genus 0 case, it was done by Yu (see \cite{Yu91}, Theorem 3.1 and \cite{Yu97}, Theorem 4.1). His works are built on two main ingredients. The first one is Yu's theory where he developed an analogue of W\"ustholz's analytic subgroup theorem for function fields while the second one is the Anderson-Thakur theorem mentioned in the previous section. The main result of this section is to extend Yu's work on elliptic curves.

Recall that $A$ is generated as an $\Fq$-algebra by $t$ and $y$. Following Green (see \cite{Gre17b}, Section 7), we still denote by $\rho:\Fq[t] \lra \bC_\infty \{\tau\}$ the induced Drinfeld $\Fq[t]$-module by forgetting the $y$-action. Similarly, we denote by $\rho^{\otimes n}:\Fq[t] \lra M_n(\bC_\infty) \{\tau\}$ the Anderson $\Fq[t]$-module by forgetting the $y$-action. Basic properties of this Anderson module are given below.

\begin{proposition}[Green \cite{Gre17b}, Lemmas 7.2 and 7.3] \label{prop: simple modules}${}$\par
1) The Anderson $\Fq[t]$-module $\rho^{\otimes n}:\Fq[t] \lra M_n(\bC_\infty) \{\tau\}$ is simple in the sense of Yu (see \cite{Yu91, Yu97}). 

2) The Anderson $\Fq[t]$-module $\rho^{\otimes n}:\Fq[t] \lra M_n(\bC_\infty) \{\tau\}$ has endomorphism algebra equal to $A$. 
\end{proposition}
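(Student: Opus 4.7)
I plan to prove the two parts in sequence, exploiting the extra $A = \Fq[t,y]$-module structure on $\rho^{\otimes n}$, which is the key feature that upgrades results about $A$-modules to results about the underlying $\Fq[t]$-modules.

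\textbf{Part (1).} I argue by contradiction. Suppose $E' \subsetneq \rho^{\otimes n}$ is a proper nonzero sub-Anderson-$\Fq[t]$-module. Since $A$ is commutative and acts by $\Fq[t]$-endomorphisms, the image $y \cdot E'$ is again $\Fq[t]$-stable, so $E'' := E' + y \cdot E'$ is an $\Fq[t]$-submodule of $\rho^{\otimes n}$. The Weierstrass relation writes $y^2 \in \Fq[t] + y\,\Fq[t]$, which yields $y \cdot E'' \subseteq E''$; hence $E''$ is stable under all of $A$. Now $\rho^{\otimes n}$ is a rank-$1$ $A$-Anderson-module (being the $n$th tensor power of the rank-$1$ Drinfeld $A$-module $\rho$), and any rank-$1$ $A$-Anderson-module is simple: via the anti-equivalence, its dual $A$-motive $N_n$ is rank-$1$ projective over $\overline K[t,y]$ and therefore admits no nonzero proper quotient dual motive. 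So $E'' \in \{0,\rho^{\otimes n}\}$; the zero case forces $E' = 0$, a contradiction.

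The remaining case is $E' + y \cdot E' = \rho^{\otimes n}$ with $E' \subsetneq \rho^{\otimes n}$. Writing $m := \dim E'$, we obtain $\dim(E' \cap y E') = 2m - n \geq 0$, forcing $m \geq n/2$; the would-be $E'$ is thus an $\Fq[t]$-Anderson-submodule of rank $1$ sitting inside the rank-$2$ module $\rho^{\otimes n}$. To rule this out I would use the explicit form of $\rho^{\otimes n}_t$ from \eqref{taction}: the Frobenius piece $E_\theta \tau$ is supported only in the bottom two rows, while the nilpotent part $d[\theta] - \theta I$ is nearly cyclic (second super-diagonal filled with $1$'s, by Proposition~\ref{P:aifacts}). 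Requiring $E'$ to be simultaneously $d[\theta]$- and $E_\theta\tau$-invariant, and to decompose with $y E'$ as above, severely constrains the possible Lie algebra of $E'$. This final case analysis is the main technical obstacle; it is likely cleanest to recast it on the dual motive $N_n$ as a rank-$2$ $\overline K[t,\sigma]$-module and use the irreducibility of the shtuka function $f$.

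\textbf{Part (2).} The inclusion $A \hookrightarrow \End_{\Fq[t]}(\rho^{\otimes n})$ is immediate from commutativity of $A$. For the reverse, part~(1) implies $D := \End_{\Fq[t]}(\rho^{\otimes n}) \otimes_{\Fq[t]} \Fq(t)$ is a finite-dimensional division algebra over $\Fq(t)$, and it embeds in $\Mat_2(\Fq(t))$ via the generic Tate module of $\rho^{\otimes n}$ (whose $\Fq[t]$-rank is $2$); thus $\dim_{\Fq(t)} D \leq 4$. The subring $A \otimes_{\Fq[t]} \Fq(t) = \Fq(t,y)$ sits inside $D$ as a degree-$2$ extension of $\Fq(t)$. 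Since $\rho$ has generic characteristic, Yu's theory of endomorphism algebras of simple Anderson modules (cf.\ \cite{Yu91, Yu97}) shows that $D$ is commutative; combined with the Tate-module bound this forces $D = \Fq(t,y)$. Finally, $\End_{\Fq[t]}(\rho^{\otimes n})$ is an order in $\Fq(t,y)$ containing the maximal order $A$, so it equals $A$.
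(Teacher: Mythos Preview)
The paper does not prove this proposition at all: it is quoted verbatim from \cite{Gre17b}, Lemmas~7.2 and~7.3, with no argument supplied here. So there is no ``paper's own proof'' to compare against; I can only assess your attempt on its own merits.

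\textbf{Part (1).} Your reduction to the $A$-module structure is the right instinct, and the observation that $E'' = E' + \rho_y^{\otimes n}(E')$ is $A$-stable via the Weierstrass relation is clean. However, you explicitly leave the case $E' + yE' = \rho^{\otimes n}$ with $E' \subsetneq \rho^{\otimes n}$ unresolved, calling it ``the main technical obstacle'' and only gesturing at how one might proceed. That is a genuine gap, not a detail: this case is exactly the heart of the matter, since an $\Fq[t]$-submodule that is \emph{not} $y$-stable is precisely what distinguishes $\Fq[t]$-simplicity from $A$-simplicity. Your dimension count $m \geq n/2$ and the remark about $E'$ having $\Fq[t]$-rank~$1$ are suggestive but do not by themselves exclude anything. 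The actual argument in \cite{Gre17b} proceeds differently and does complete this step; you would need to either consult it or carry out the dual-motive analysis you allude to in full.

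\textbf{Part (2).} The skeleton is reasonable --- embed the endomorphism ring in a matrix algebra via a rank-$2$ realization, bound the dimension, use maximality of $A$ as an order --- but the crucial step ``Yu's theory \ldots\ shows that $D$ is commutative'' is asserted, not argued. A $4$-dimensional division algebra over $\Fq(t)$ containing the quadratic field $\Fq(t,y)$ could in principle be a quaternion algebra, and you have not excluded this. You need either an explicit commutativity argument (e.g.\ via purity/weights, or via the action on the period lattice as a rank-$1$ $A$-module) or a sharper dimension bound. Also, part~(2) as you have written it depends on part~(1) for the division-algebra claim, so the gap in (1) propagates.
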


We slightly generalize \cite{Gre17b}, Theorem 7.1 to obtain the following theorem which settles the problem of determining linear relations among Anderson's zeta values and periods attached to $\rho$, which generalizes the work of Yu.

\begin{theorem}\label{T:LinearIndependence}
Let $\{b_1,\ldots,b_h\}$ be a $K$-basis of $H$ with $b_i \in B$. We consider the following sets for $m,s\geq 1$
\begin{align*}
& \mathcal R  :=\{\pi_\rho^k, 0 \leq k \leq m\} \cup \{\zeta_\rho(b_i,n): 1 \leq i \leq h, 1  \leq n\leq s \text{ such that } q-1 \nmid n \}, \\
& \mathcal R'  :=\{\pi_\rho^k, 0 \leq k \leq m\} \cup \{\zeta_\rho(b_i,n): 1 \leq i \leq h, 1 \leq n\leq s  \}.
\end{align*}
Then 

1) The $\overline K$-vector space generated by the elements in $\mathcal{R}$ and that generated by those in $\mathcal{R}'$ are the same. 

2) The elements in $\mathcal{R}$ are linearly independent over $\overline K$.
\end{theorem}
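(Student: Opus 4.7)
The statement splits naturally into two parts. For part (1), I need to show that whenever $(q-1)\mid n$, the value $\zeta_\rho(b_i,n)$ already lies in $\overline K\cdot\pi_\rho^n$, so that including it does not enlarge the $\overline K$-span. I would prove this by an Euler-Carlitz type argument on the elliptic curve: by Proposition \ref{P:EuReview}(e), the last coordinate $p_n$ of the fundamental period $\Pi_n$ of $\Exp_{\rho\on}$ satisfies $p_n/\pi_\rho^n\in H$, while by Theorem \ref{theorem: Anderson-Thakur}, $C_n\zeta_\rho(b_i,n)$ is the last coordinate of a vector $Z_n(b_i)$ with $\Exp_{\rho\on}(Z_n(b_i))\in H^n$. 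When $(q-1)\mid n$, an $\Fq^\times$-scaling equivariance argument (using that scaling by $\Fq^\times$ commutes with $\Exp_{\rho\on}$ via $d[\theta]$) shows that $Z_n(b_i)$ differs from an $H$-multiple of $\Pi_n$ by an element of the period lattice $\Lambda_{\rho\on}$; comparing last coordinates yields $\zeta_\rho(b_i,n)\in H\cdot\pi_\rho^n$.

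For part (2), I would adapt Yu's strategy from \cite{Yu97}, as implemented for tensor powers of $\rho$ in Green \cite{Gre17b}. Assume for contradiction a nontrivial $\overline K$-linear relation
\[
\sum_{k=0}^m \alpha_k\,\pi_\rho^k \;+\; \sum_{\substack{1\le n\le s\\(q-1)\nmid n}}\sum_{i=1}^h \beta_{i,n}\,\zeta_\rho(b_i,n)=0.
\]
Using Theorem \ref{theorem: Anderson-Thakur}, realize each $C_n\zeta_\rho(b_i,n)$ as the last coordinate of $Z_n(b_i)\in\bC_\infty^n$ with $\Exp_{\rho\on}(Z_n(b_i))\in H^n$; using Proposition \ref{P:EuReview}(d)--(e), realize each $\pi_\rho^k$ (up to a nonzero element of $H$) as the last coordinate of a fundamental period $\Pi_k$ of $\Exp_{\rho^{\otimes k}}$. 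Assembling these into a single vector $\mathbf{u}\in\operatorname{Lie}(E)(\bC_\infty)$ for the product Anderson $\Fq[t]$-module
\[
E \;=\; \bigoplus_{k=1}^m \hat\rho^{\otimes k}\;\oplus\;\bigoplus_{\substack{1\le n\le s\\(q-1)\nmid n}}(\hat\rho\on)^{\oplus h},
\]
one has $\Exp_E(\mathbf{u})\in E(\overline K)$, and the assumed relation becomes a nontrivial $\overline K$-linear form on $\operatorname{Lie}(E)$ vanishing at $\mathbf{u}$.

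By Yu's sub-$t$-module theorem there is then a proper sub-$t$-module $F\subsetneq E$ defined over $\overline K$ with $\mathbf{u}\in\operatorname{Lie}(F)$. Proposition \ref{prop: simple modules} enters decisively: each $\hat\rho\on$ is simple over $\Fq[t]$ with $\End(\hat\rho\on)=A$, and for $n\ne n'$ the modules $\hat\rho^{\otimes n}$ and $\hat\rho^{\otimes n'}$ have different dimensions and admit no nonzero morphism. Hence any proper sub-$t$-module of $E$ is obtained by omitting some simple components in the first block and/or by restricting the $(\hat\rho\on)^{\oplus h}$-block in the second to a proper $A$-submodule under the diagonal action. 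The $K$-linear independence of $\{b_1,\dots,b_h\}$, together with the explicit Galois conjugates $\sigma_I(b_i)$ appearing in the definition \eqref{D:Anderson Zeta Values} of $\zeta_\rho(b_i,n)$, then forces $\beta_{i,n}=0$ for all $i,n$; the non-torsion of each $\Pi_k$ forces $\alpha_k=0$ for $k\ge 1$; and $\alpha_0=0$ follows trivially. The hypothesis $(q-1)\nmid n$ is crucial at this stage to prevent the $\hat\rho\on$-summand from being absorbed into a $\hat\rho^{\otimes k}$-summand via the Euler-Carlitz relation of part (1).

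The main technical difficulty I anticipate lies in the last step: correctly classifying the $A$-submodules of $(\hat\rho\on)^{\oplus h}$ under the diagonal action and verifying that $Z_n(b_1),\dots,Z_n(b_h)$ remain $A$-linearly independent modulo such submodules. This is precisely where the class number $h>1$ enters in an essential way beyond the Carlitz setting of \cite{Yu97}, and where the Hilbert class field structure of $H/K$ is exploited through the $K$-basis hypothesis on the $b_i$.
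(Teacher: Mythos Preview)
Your overall framework for part (2) matches the paper's: assemble the periods $\Pi_k$ and the vectors $Z_n(b_i)$ into a point $\mathbf{u}$ of a product Anderson $\Fq[t]$-module, observe that $\Exp(\mathbf{u})$ is algebraic, and invoke Yu's sub-$t$-module theorem together with the simplicity of each $\hat\rho^{\otimes n}$ (Proposition~\ref{prop: simple modules}). Where your proposal diverges from the paper is in the endgame, and this is precisely the place you flag as the ``main technical difficulty.''

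The paper does \emph{not} classify sub-$t$-modules of $(\hat\rho^{\otimes n})^{\oplus h}$ or appeal to the Galois conjugates $\sigma_I(b_i)$. Instead, after Yu's theorem and simplicity reduce the putative relation to a single $n$ with $(q-1)\nmid n$ and to coefficients in $A$,
\[
\sum_{i=1}^h a_i\,\zeta_\rho(b_i,n) + b\,\pi_\rho^n = 0,\qquad a_i,b\in A,
\]
two elementary observations finish the argument immediately. First, each $\zeta_\rho(b_i,n)$ lies in $K_\infty$ (clear from the defining series), so $b\pi_\rho^n\in K_\infty$; but $\pi_\rho^n\notin K_\infty$ when $(q-1)\nmid n$, forcing $b=0$. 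Second, $b\mapsto\zeta_\rho(b,n)$ is $A$-linear, so $\sum_i a_i\zeta_\rho(b_i,n)=\zeta_\rho\bigl(\sum_i a_ib_i,\,n\bigr)$; nonvanishing of Anderson's zeta values then gives $\sum_i a_ib_i=0$, whence $a_i=0$ by the $K$-basis hypothesis. You missed both of these tricks, and as a result your sketch of the final step is vague and your anticipated ``technical difficulty'' is a phantom. Your claim that the condition $(q-1)\nmid n$ serves to prevent absorption of $\hat\rho^{\otimes n}$ into some $\hat\rho^{\otimes k}$ is not the real role it plays; it is used solely for the $K_\infty$-irrationality of $\pi_\rho^n$.

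For part (1), your $\Fq^\times$-equivariance sketch is not convincing as written: it is unclear why scaling should force $Z_n(b_i)$ to differ from an $H$-multiple of $\Pi_n$ by a period. The paper simply invokes the Carlitz--Goss result (stated immediately after the theorem) that $\zeta_\rho(b,n)/\pi_\rho^n\in K$ whenever $(q-1)\mid n$; this is what actually yields the equality of spans.
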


\begin{proof}
The proof follows the same lines as that of \cite{Yu97}, Theorem 4.1, see also \cite{Gre17b}, Theorem 7.1. We provide a proof for the convenience of the reader.

Recall that for $n \in \mathbb N$, $\rho\on$ denotes the Anderson $\Fq[t]$-module induced by $\rho\on$. We consider
\[G = G_L \times \left (\prod_{k=1}^{m} \rho^{\otimes k} \right ) \times \left (\prod_{\substack{n=1\\q-1 \nmid n}}^{s} \left( \rho^{\otimes n} \right)^h \right )\]
where $G_L$ is the trivial $t$-module.

For $1 \leq n \leq s$ set $Z_n(b_i) =(*,\ldots,*,C_n \zeta_\rho(b_i,n)^\top) \in \bC_\infty^n$ to be the vector from Theorem \ref{theorem: Anderson-Thakur} such that $\Exp_\rho\on(Z_n(b_i)) \in H^n$. For $1 \leq k \leq m$, let $\Pi_k \in \bC_\infty^k$ be a fundamental period of $\Exp_\rho^{\otimes k}$ such that the bottom coordinate of $\Pi_k$ is an $H$ multiple of $\pi_\rho^k$. Define the vector
\[ \bu = 1 \times \left (\prod_{k=1}^{m} \Pi_k \right ) \times \left (\prod_{\substack{n=1\\q-1 \nmid n}}^{s} \prod_{i=1}^h Z_n(b_i) \right ) \in G(\bC_\infty) \]
and note $\Exp_G(\bu) \in G(H)$, where $\Exp_G$ is the exponential function on $G$. Our assumption that there is a $\bar K$-linear relation among the $\zeta_\rho(b_i,n)$ and $\pi_\rho^k$ implies that $\bu$ is contained in a $d[\Fq[t]]$-invariant hyperplane of $G(\bC_\infty)$ defined over $\bar K$. This allow us to apply \cite[Thm. 1.3]{Yu97}, which says that $\bu$ lies in the tangent space to the origin of a proper $t$-submodule $G' \subset G$. Then Proposition \ref{prop: simple modules} together with [\cite[Thm. 1.3]{Yu97} imply that there exist $1 \leq n \leq s$ with $q-1 \nmid n$ and a linear relation of the form 
\[ \sum_{i=1}^h a_i \zeta_\rho(b_i,n) + b \pi_\rho^n=0 \]
for some $a_i,b \in A$ not all zero. Since $\zeta_\rho(b_i,n) \in K_\infty$ and since $H \subset K_\infty$, this implies that $b \pi_\rho^n \in K_\infty$. Since $q-1 \nmid n$, we know that $\pi_\rho^n \notin K_\infty$. It follows that $b=0$ and hence $\sum_{i=1}^h a_i \zeta_\rho(b_i,n)=0$. Since $a_i \in A$, we get
\[ 0=\sum_{i=1}^h a_i \zeta_\rho(b_i,n)=\zeta_\rho \left( \sum_{i=1}^h a_i b_i,n \right).\]
We deduce that $\sum_{i=1}^h a_i b_i=0$. Since $b_i$ is a $K$-basis of $H$, this forces $a_i=0$ for all $i$. This provides a contradiction, and proves the theorem. 
\end{proof}

As explained by B. Anglès \footnote{Personal communication in July 2019}, the following result is attributed to Carlitz and Goss  (see \cite{Gos96}, Lemma 8.22.4) which improves \cite{ANDTR17b}, Theorem 5.7 and \cite{Gre17b}, Corollary 7.4.
\begin{proposition}
Let $n \geq 1, n \equiv 0 \pmod{q-1}$ be an integer. Then for $b \in B$, we have $\zeta_\rho(b,n) / \pi_\rho^n \in K$.
\end{proposition}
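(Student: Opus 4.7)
The strategy, following Goss's treatment in \cite[Lem.~8.22.4]{Gos96}, is Galois-theoretic. The point is that both $\zeta_\rho(b,n)$ and $\pi_\rho^n$, when $(q-1)\mid n$, transform by the same character $J\mapsto\psi(J)^n$ of the ideal class group $G=\Gal(H/K)$, so that their ratio is $G$-invariant and descends to $K$.

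\textbf{Step 1: action on the zeta value.} Starting from $\zeta_\rho(b,n) = \sum_{I}\sigma_I(b)/\psi(I)^n$, apply $\sigma_J$ termwise. Using the cocycle relation $\psi(IJ) = \sigma_J(\psi(I))\psi(J)$ of Section~\ref{S:And-Thak Theorem Gen} in the form $\sigma_J(\psi(I))=\psi(IJ)/\psi(J)$, and reindexing via the bijection $I\mapsto IJ$ on $\mathcal I(A)$, one obtains
\[\sigma_J(\zeta_\rho(b,n)) = \psi(J)^n\,\zeta_\rho(b,n).\]

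\textbf{Step 2: action on the period.} The hypothesis $(q-1)\mid n$ is essential: although $\pi_\rho$ is only well-defined in $\overline K_\infty$ up to an $\Fq^\times$-ambiguity, the power $\pi_\rho^{q-1}$ is canonical, and the standard Hayes theory for sign-normalized rank-$1$ Drinfeld modules produces a natural extension of each $\sigma_J$ to $\overline K_\infty$ under which
\[\sigma_J(\pi_\rho^{q-1}) = \psi(J)^{q-1}\,\pi_\rho^{q-1},\]
reflecting that $\rho_J:\rho\to\rho^{\sigma_J}$ is an isogeny of derivative $\psi(J)$ at the origin. Raising to the $(n/(q-1))$-th power gives $\sigma_J(\pi_\rho^n)=\psi(J)^n\pi_\rho^n$. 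Combining with Step~1, the quotient $\zeta_\rho(b,n)/\pi_\rho^n$ is fixed by every $\sigma_J$, hence by $G$, so it lies in $H^G=K$.

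\textbf{Main obstacle.} The real subtlety is reconciling the termwise Galois action of Step~1 with a genuine continuous field automorphism, since $\zeta_\rho(b,n)$ lives in the analytic completion $K_\infty$ while its summands lie in $H$. One fixes an embedding $H\hookrightarrow K_\infty$ (available because $\infty$ splits completely in $H/K$) and checks that the termwise action of $\sigma_J$ on summands agrees with a continuous extension of $\sigma_J$ to $K_\infty$ via $\infty$-adic convergence of partial sums. Alternatively, as in the Carlitz-Goss argument, one may first express $\zeta_\rho(b,n)$ as an Eisenstein-type sum over the period lattice $\Lambda_\rho$ whose coefficients are built from the Taylor expansion of $\exp_\rho$ with entries in $H$; this directly places $\zeta_\rho(b,n)/\pi_\rho^n$ in $H$, after which the Galois calculation above descends it to $K$. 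Either way, one avoids appealing to Theorem~\ref{T:LinearIndependence}, since that result leaves the coefficient in $\overline K$ and the present proposition is precisely the refinement to $K$.
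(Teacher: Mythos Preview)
First, a framing remark: the paper does not actually supply a proof of this proposition. It is stated without argument, attributed to Carlitz and Goss via \cite[Lem.~8.22.4]{Gos96}, with a note that it sharpens \cite[Thm.~5.7]{ANDTR17b} and \cite[Cor.~7.4]{Gre17b}. So the benchmark is Goss's Eisenstein-series argument, not anything written in this paper.

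Your main Galois approach (Steps~1--2) has a genuine gap, and you have in fact put your finger on it in the ``Main obstacle'' paragraph without resolving it. The paper fixes once and for all an embedding $H\subset K_\infty$ (Section~\ref{S:Review of Tensor Powers}), and every $\sigma_J\in G=\Gal(H/K)$ is the identity on $K$; hence its only continuous extension to $K_\infty$ is the identity. So the only coherent meaning of $\sigma_J(\zeta_\rho(b,n))$ as an element of $K_\infty$ is $\zeta_\rho(b,n)$ itself, and your displayed identity in Step~1 would then force $\psi(J)^n=1$, which is false in general. Interpreting ``apply $\sigma_J$'' termwise does not save the computation either: after the cocycle manipulation you obtain $\psi(J)^n\sum_I\sigma_{IJ}(b)/\psi(IJ)^n$, and the substitution $I\mapsto IJ$ is a bijection on the group $\mathcal I(A)$ of \emph{fractional} ideals but not on the set of \emph{integral} ideals over which the zeta sum actually runs, so the reindexed sum is not $\zeta_\rho(b,n)$. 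The same objection undermines Step~2: there is no continuous automorphism of $K_\infty$ (or of $\bC_\infty$) over $K$ that simultaneously acts nontrivially on $H$ and scales $\pi_\rho^n$ by $\psi(J)^n$.

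Your alternative sketch via Eisenstein-type lattice sums is the correct direction and is essentially the Carlitz--Goss argument: one groups $\zeta_\rho(b,n)$ by ideal class, rewrites each class-sum (for $(q-1)\mid n$) as a lattice Eisenstein series over a fractional ideal, and then identifies these Eisenstein series with Taylor coefficients of $z/\exp_{\rho}(z)$ (and its Galois conjugates), using the explicit Hayes theory relating the periods of the conjugate modules to $\psi$ and $\pi_\rho$. That route lands in $K$ without ever invoking a Galois action on $K_\infty$, which is precisely where your Steps~1--2 run aground. As written, however, your alternative is only a one-sentence pointer; to constitute a proof it would need to be carried out in full.
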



\subsection{Papanikolas's work} \label{sec:Papanikolas} ${}$\par

We review Papanikolas' theory \cite{Pap08} (see also \cite{And86,ABP04}) and work with $t$-motives. Let $\overline K[t,\sigma]$ be the polynomial ring in variables $t$ and $\sigma$ with the rules
	\[at=ta, \ \sigma t=t \sigma, \ \sigma a=a^{1/q} \sigma, \quad a \in \overline K. \]
By definition, an Anderson dual $t$-motive is a left $\overline K[t,\sigma]$-module $N$ which is free and finitely generated both as a left $\overline K[t]$-module and as a left $\overline K[\sigma]$-module and which satisfies 
	\[ (t-\theta)^d N \subset \sigma N \]
for some integer $d$ sufficiently large. We consider $\overline K(t)[\sigma,\sigma\inv]$ the ring of Laurent polynomials in $\sigma$ with coefficients in $\overline K(t)$. A pre-$t$-motive is a left $\overline K(t)[\sigma,\sigma\inv]$-module that is finite dimensional over $\overline K(t)$. The category of pre-$t$-motives is abelian and there is a natural functor from the category of Anderson dual $t$-motives to the category of pre-$t$-motives
	\[ N \mapsto M:=\overline K(t) \otimes_{\overline K[t]} N \]
where $\sigma$ acts diagonally on $M$.

We now consider pre-$t$-motives $M$ which are rigid analytically trivial. Let ${\bf m} \in M_{r \times 1}(M)$ be a $\overline K(t)$-basis of $M$ and let $\Phi \in \text{GL}_r(\overline K[t])$ be the matrix representing the multiplication by $\sigma$ on $M$:
	\[ \sigma({\bf m})=\Phi {\bf m}. \] 
We recall that $\mathbb T$ is the Tate algebra in variable $t$ with coefficients in $\bC_\infty$ and that $\mathbb L$ is the fraction field of the Tate algebra $\mathbb T$. We say that $M$ is  rigid analytically trivial if there exists $\Psi \in \text{GL}_r(\mathbb L)$ such that
	\[ \Psi^{(-1)}=\Phi \Psi. \]

We set $M^\dagger:=\mathbb L \otimes_{\overline K(t)} M$ on which $\sigma$ acts diagonally and define $H_{\text{Betti}}(M)$ to be the $\Fq(t)$-submodule of $M^\dagger$ fixed by $\sigma$. We call $H_{\text{Betti}}(M)$ the Betti cohomology of $M$.	One can show that $M$ is rigid analytically trivial if and only if the natural map
	\[ \mathbb L \otimes_{\Fq(t)} H_{\text{Betti}}(M) \rightarrow M^\dagger \]
is an isomorphism. We then call $\Psi$ a rigid analytically trivialization for the matrix $\Phi$.

The category of pre-$t$-motives which are rigid analytically trivial is a neutral Tannakian category over $\Fq(t)$ with the fiber functor $\omega$ which maps $M \mapsto H_{\text{Betti}}(M)$ (see \cite{Pap08}, Theorem 3.3.15). The strictly full Tannakian subcategory generated by the images of rigid analytically trivial Anderson dual $t$-motives is called the category of $t$-motives denoted by $\mathcal T$ (see \cite{Pap08}, Section 3.4.10). By \cite{HJ16}, Remark 4.14, this categorgy is equivalent to the category of uniformizable dual $\Fq[t]$-motives given in \cite{HJ16}, Definition 4.13. 

By Tannakian duality, for each (rigid analytically trivial) $t$-motive $M$, the Tannakian subcategory generated by $M$ is equivalent to the category of finite dimensional representations over $\Fq(t)$ of some algebraic group $\Gamma_M$. It is called the (motivic) Galois group of the $t$-motive $M$. Further, we always have a faithful representation $\Gamma_M \hookrightarrow \text{GL}(H_{\text{Betti}}(M))$ which is called the tautological representation of $M$.

Papanikolas proved an analogue of Grothendieck's period conjecture which unveils a deep connection between Galois groups of $t$-motives and transcendence.

\begin{theorem}[Papanikolas \cite{Pap08}, Theorem 1.1.7] \label{theorem: Papanikolas}
Let $M$ be a $t$-motive and let $\Gamma_M$ be its Galois group. Suppose that $\Phi \in \text{GL}_n(\overline K(t)) \cap M_{n \times n}(\overline K[t])$ represents the multiplication by $\sigma$ on $M$ and that $\det \phi=c(t-\theta)^s, c \in \overline K$. If $\Psi \in \text{GL}_n(\bT)$ is a rigid analytic trivialization for $\Phi$, then
	\[ \text{tr.} \deg_{\overline K} \overline K(\Psi(\theta))=\dim \Gamma_M. \]
\end{theorem}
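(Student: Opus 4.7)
The plan is to deduce the theorem by combining two independent ingredients: a Tannakian/Galois-theoretic computation identifying $\dim \Gamma_M$ with the transcendence degree of the entries of $\Psi$ over $\overline K(t)$ in the variable $t$, and a specialization principle --- the Anderson-Brownawell-Papanikolas (ABP) theorem --- that converts transcendence of $\Psi$ to transcendence of the specialization $\Psi(\theta)$ over $\overline K$.

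For the Galois-theoretic side, I would introduce two independent copies $\Psi_1 := \Psi \otimes 1$ and $\Psi_2 := 1 \otimes \Psi$ in a suitable tensor product ring and form $\widetilde\Psi := \Psi_1^{-1}\Psi_2$. Since both copies satisfy $\Psi_i^{(-1)} = \Phi \Psi_i$, the matrix $\widetilde\Psi$ is $\sigma$-invariant, so its entries lie in an $\F_q(t)$-subalgebra. The ideal of all polynomial relations among these entries cuts out a closed subgroup scheme $\Gamma_\Psi \subset \text{GL}_n/\F_q(t)$. One then verifies via Tannakian duality that the fiber functor $M' \mapsto H_{\text{Betti}}(M')$ restricted to $\langle M\rangle$ is representable by $\Gamma_\Psi$ acting naturally on $H_{\text{Betti}}(M)$; in particular $\Gamma_M \cong \Gamma_\Psi$ as affine group schemes over $\F_q(t)$. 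The ring $\overline K(t)[\Psi_{ij},(\det\Psi)^{-1}]$ is then naturally a $\Gamma_\Psi$-torsor under right multiplication (being essentially the scalar extension to $\overline K(t)$ of the coordinate ring of $\Gamma_\Psi$), so its Krull dimension equals $\dim \Gamma_\Psi$. This gives $\dim \Gamma_M = \text{tr.deg}_{\overline K(t)} \overline K(t)(\Psi_{ij})$.

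For the second ingredient, one invokes the ABP theorem on algebraic relations. Because $\Phi \in M_{n\times n}(\overline K[t])$ and $\det \Phi = c(t-\theta)^s$, the functional equation $\Psi^{(-1)} = \Phi \Psi$ forces $\Psi$, a priori only in $\text{GL}_n(\mathbb T)$, to extend to an entire matrix function on $\mathbb C_\infty$ with controlled growth. The ABP theorem then asserts that any $\overline K$-linear relation among the specialized entries $\Psi_{ij}(\theta)$ lifts to a $\overline K(t)$-linear relation among the entries $\Psi_{ij}(t)$; passing to symmetric powers of $M$ promotes this linear statement to algebraic relations. This yields $\text{tr.deg}_{\overline K}\overline K(\Psi(\theta)) \geq \text{tr.deg}_{\overline K(t)} \overline K(t)(\Psi)$, while the reverse inequality is automatic by specializing relations at $t = \theta$. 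Combining the two inputs gives the desired equality.

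The principal obstacle is the identification $\Gamma_M \cong \Gamma_\Psi$: one must show that the affine group scheme defined through polynomial relations among entries of $\widetilde\Psi$ is canonically the motivic Galois group produced by Tannakian duality. This is the heart of the proof and requires assembling the torsor structure on $\overline K(t)[\Psi_{ij},(\det\Psi)^{-1}]$, interpreting it as a geometric realization of the Tannakian fiber functor, and descending faithfully from $\overline K(t)$ down to $\F_q(t)$. A secondary technical point is extending ABP from linear to algebraic relations, which is handled by applying the linear version to carefully chosen symmetric and tensor powers of $M$ that are themselves $t$-motives with rigid analytic trivializations built from $\Psi$; once set up, this step is formal.
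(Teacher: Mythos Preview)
The paper does not give its own proof of this theorem: it is simply quoted from Papanikolas's paper \cite{Pap08} (Theorem~1.1.7) as part of the background review in Section~\ref{sec:Papanikolas}, with no argument supplied. So there is nothing in the present paper to compare your proposal against.

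That said, your sketch is an accurate outline of Papanikolas's original proof. The two pillars you identify --- the identification $\Gamma_M \cong \Gamma_\Psi$ via the torsor structure on $\overline K(t)[\Psi_{ij},(\det\Psi)^{-1}]$ (this is \cite{Pap08}, Theorems~4.3.1 and~4.5.10), and the passage from functional to numerical transcendence via the ABP linear-independence criterion \cite{ABP04} applied to tensor constructions on $M$ (this is \cite{Pap08}, Theorem~5.2.2) --- are exactly the structure of the argument in \cite{Pap08}. Your assessment of where the real work lies (the Tannakian identification and the descent to $\F_q(t)$) is also correct; the ABP upgrade from linear to algebraic relations is indeed a formal consequence once one has the linear version for all symmetric powers.
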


Papanikolas also show that $\Gamma_M$ equals to the Galois group $\Gamma_\Psi$ of the Frobenius difference equation corresponding to $M$ (see \cite{Pap08}, Theorem 4.5.10). It provides a method to compute explicitly the Galois groups for many cases of $t$-motives. It turns out to be a very powerful tool and has led to major transcendence results in the last twenty years. We refer the reader to \cite{ABP04,Cha12,CP11,CP12,CPTY10,CPY10,CPY11,CY07} for more details about transcendence applications.

Papanikolas proved that $\Gamma_M$ is an affine algebraic groupe scheme over $\Fq(t)$ which is absolutely irreducible and smooth over $\overline{\Fq(t)}$ (see \cite{Pap08}, Theorems 4.2.11, 4.3.1 and 4.5.10). Further, for any $\Fq(t)$-algebra $R$, the map 
	\[ \Gamma_M(R) \rightarrow \text{GL}(R \otimes_{\Fq(t)} H_{\text{Betti}}(M)) \] 
is given by
\begin{align} \label{eq: Galois Frob}
\gamma \mapsto (1 \otimes \Psi^{-1} {\bf m} \mapsto (\gamma^{-1} \otimes 1) \cdot (1 \otimes \Psi^{-1} {\bf m})).
\end{align}

\subsection{Hardouin's work} \label{sec:Hardouin} ${}$\par

In this section, we review the work of Hardouin \cite{Har11} on unipotent radicals of Tannakian groups in positive characteristic. Let $F$ be a field and $(\mathcal T,\omega)$ be a neutral Tannakian category over $F$ with fiber functor $\omega$. We denote by $\mathbb G_m$ the multiplicative group over $F$. For an object $\mathcal U \in \mathcal T$, we denote by $\Gamma_{\mathcal U}$ the Galois group of $\mathcal U$. Let $\mathbf 1$ be the unit object for the tensor product and $\mathcal Y$ be a completely reducible object that means $\mathcal Y$ is a direct sum of finitely many irreducible objects. We consider extensions $\mathcal U \in \text{Ext}^1(\mathbf 1,\mathcal Y)$ of $\mathbf 1$ by $\mathcal Y$, that means that we have a short exact sequence
	\[ 0 \rightarrow \mathcal Y \rightarrow \mathcal U \rightarrow \mathbf 1 \rightarrow 0. \]
For such an extension $\mathcal U$, the Galois group $\Gamma_{\mathcal U}$  of $U$ can be written as the semi-direct product
	\[ \Gamma_{\mathcal U}=R_u({\mathcal U}) \rtimes \Gamma_{\mathcal Y} \]
where $R_u({\mathcal U})$ stands for the unipotent part of $\Gamma_{\mathcal U}$. Therefore we reduce the computation of $\Gamma_{\mathcal U}$ to that of its unipotent part. In \cite{Har11}, Hardouin proves several fundamental results which characterize $R_u({\mathcal U})$ in terms of the extension group $\text{Ext}^1(\mathbf 1,\mathcal Y)$.

\begin{theorem}[Hardouin \cite{Har11}, Theorem 2] \label{theorem:Hardouin}
We keep the notation as above. Assume that
\begin{itemize}
\item[1.] every $\Gamma_{\mathcal Y}$-module is completely reducible,
\item[2.] the center of $\Gamma_{\mathcal Y}$ contains $\mathbb G_m$,
\item[3.] the action of $\mathbb G_m$ on $\omega(\mathcal Y)$ is isotypic,
\item[4.] $\Gamma_{\mathcal U}$ is reduced.
\end{itemize}
Then there exists a smallest sub-object $\mathcal V$ of $\mathcal Y$ such that $\mathcal U/\mathcal V$ is a trivial extension of $\mathbf 1$ by $\mathcal Y/\mathcal V$. Further, the unipotent part $R_u({\mathcal U})$ of the Galois group $\Gamma_{\mathcal U}$ equals to $\omega(\mathcal V)$.
\end{theorem}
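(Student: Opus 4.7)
The plan is to exploit the semi-direct product decomposition $\Gamma_{\mathcal U} = R_u(\mathcal U) \rtimes \Gamma_{\mathcal Y}$ stated in the excerpt and identify $R_u(\mathcal U)$ explicitly as a sub-representation of $\omega(\mathcal Y)$. First I would split the short exact sequence $0 \to \omega(\mathcal Y) \to \omega(\mathcal U) \to \omega(\mathbf 1) \to 0$ as $F$-vector spaces. Since $R_u(\mathcal U)$ is the kernel of $\Gamma_{\mathcal U} \twoheadrightarrow \Gamma_{\mathcal Y}$, it acts trivially both on $\omega(\mathcal Y)$ (whose $\Gamma_{\mathcal U}$-action factors through $\Gamma_{\mathcal Y}$) and on $\omega(\mathbf 1)$. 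Relative to the chosen splitting, each $u \in R_u(\mathcal U)$ therefore acts on $\omega(\mathcal U)$ by a matrix $\begin{pmatrix} \mathrm{id} & v(u) \\ 0 & 1 \end{pmatrix}$ with $v(u) \in \omega(\mathcal Y)$. Faithfulness of the tautological representation makes $u \mapsto v(u)$ an injection of algebraic groups $\phi \colon R_u(\mathcal U) \hookrightarrow \omega(\mathcal Y)$, where the target is viewed as an additive group scheme; a direct matrix computation shows that conjugation by $\Gamma_{\mathcal Y}$ on $R_u(\mathcal U)$ corresponds under $\phi$ to the natural action on $\omega(\mathcal Y)$, so $\phi(R_u(\mathcal U))$ is a $\Gamma_{\mathcal Y}$-stable closed subgroup.

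The core of the argument, and the main obstacle in positive characteristic, is to promote this closed subgroup to a genuine linear subspace. Under assumption (2), the central $\mathbb G_m \subseteq \Gamma_{\mathcal Y}$ acts on $R_u(\mathcal U)$ by conjugation, translating under $\phi$ to the natural action on $\omega(\mathcal Y)$. By (3), this action is isotypic of a single weight $d$, which must be nonzero whenever $\mathcal Y$ is nontrivial (the only case of interest). By (4), $\Gamma_{\mathcal U}$ is reduced, so its closed subgroup $R_u(\mathcal U)$ is reduced and hence smooth, making $\phi(R_u(\mathcal U))$ a smooth closed subgroup of $\omega(\mathcal Y) \simeq \mathbb G_a^N$ with $N = \dim_F \omega(\mathcal Y)$. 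Any such subgroup is the zero locus of a collection of additive polynomials $\sum c_{i,j} X_j^{q^i}$; stability under the scalar $\mathbb G_m$-action of weight $d \neq 0$ forces the defining ideal to be homogeneous in the grading that assigns each $X_j$ the weight $d$. Since the monomial $X_j^{q^i}$ acquires weight $dq^i$ in this grading, only the terms with $i=0$ can appear, so the defining equations are linear. Thus $\phi(R_u(\mathcal U))$ coincides with a linear subspace $W \subseteq \omega(\mathcal Y)$, automatically $\Gamma_{\mathcal Y}$-stable. This is the one place where assumptions (2), (3), (4) must all be used in tandem, and where the characteristic $p$ pathologies (graph-of-Frobenius type subgroup schemes) are finally excluded.

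Finally, assumption (1) guarantees that $\omega(\mathcal Y)$ is a completely reducible $\Gamma_{\mathcal Y}$-module, and Tannakian duality applied to the subcategory generated by $\mathcal Y$ converts the $\Gamma_{\mathcal Y}$-stable subspace $W$ into a sub-object $\mathcal V \subseteq \mathcal Y$ with $\omega(\mathcal V) = W = R_u(\mathcal U)$. To verify the minimality clause, I would argue that for any sub-object $\mathcal W \subseteq \mathcal Y$ the pushout extension $\mathcal U/\mathcal W \in \mathrm{Ext}^1(\mathbf 1, \mathcal Y/\mathcal W)$ is trivial if and only if its unipotent radical vanishes, equivalently if and only if the image of $R_u(\mathcal U) = \omega(\mathcal V)$ under the surjection $\omega(\mathcal Y) \twoheadrightarrow \omega(\mathcal Y/\mathcal W)$ is zero, i.e.\ $\omega(\mathcal V) \subseteq \omega(\mathcal W)$, which by Tannakian duality means $\mathcal V \subseteq \mathcal W$. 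Hence $\mathcal V$ is the smallest sub-object of $\mathcal Y$ with the desired splitting property, and by construction $R_u(\mathcal U) = \omega(\mathcal V)$, as claimed.
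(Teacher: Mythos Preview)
The paper does not prove this theorem. It is quoted verbatim from Hardouin \cite{Har11} in the background section \S\ref{sec:Hardouin} and used as a black box; no argument is supplied. There is therefore no proof in the paper to compare your proposal against.

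For what it is worth, your sketch follows the same line as Hardouin's own argument: embed $R_u(\mathcal U)$ into the additive group $\omega(\mathcal Y)$ via the unipotent matrix block, then use the central $\mathbb G_m$ together with isotypy and reducedness to force this closed subgroup to be a linear (hence $\Gamma_{\mathcal Y}$-sub\-representation) subspace, and finally invoke Tannakian duality to produce $\mathcal V$. The weight argument you give---that $\mathbb G_m$-stability makes the defining additive polynomials homogeneous of a single $q$-power, and smoothness (via radicality of the ideal) then reduces to the linear part---is the heart of the matter and is correct. One point to sharpen: your claim that the weight $d$ is nonzero ``whenever $\mathcal Y$ is nontrivial'' is really a consequence of the faithfulness of the tautological representation $\Gamma_{\mathcal Y}\hookrightarrow \mathrm{GL}(\omega(\mathcal Y))$, which forces the central $\mathbb G_m$ to act nontrivially; you should say so rather than appeal to nontriviality of $\mathcal Y$ alone. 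The minimality clause at the end is handled correctly.
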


As a consequence, she proves the following corollary which states that algebraic relations between the extensions are exactly given by the linear relations.

\begin{corollary}[Hardouin \cite{Har11}, Corollary 1] \label{cor:Hardouin}
We continue with the above notation. Let $\mathcal E_1,\ldots,\mathcal E_n$ be extensions of $\mathbf 1$ by $\mathcal Y$. Assume that \begin{itemize}
\item[1.] every $\Gamma_{\mathcal Y}$-module is completely reducible,
\item[2.] the center of $\Gamma_{\mathcal Y}$ contains $\mathbb G_m$,
\item[3.] the action of $\mathbb G_m$ on $\omega(\mathcal Y)$ is isotypic,
\item[4.] $\Gamma_{\mathcal E_1},\ldots,\Gamma_{\mathcal E_n}$ are reduced.
\end{itemize}
Then if $\mathcal E_1,\ldots,\mathcal E_n$ are $\End(\mathcal Y)$-linear independent in $\text{Ext}^1(\mathbf 1,\mathcal Y)$, then the unipotent radical of the Galois group $\Gamma_{\mathcal E_1 \oplus \ldots \oplus \mathcal E_n}$ of the direct sum $\mathcal E_1 \oplus \ldots \oplus \mathcal E_n$ is isomorphic to $\omega(\mathcal Y)^n$.
\end{corollary}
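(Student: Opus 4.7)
The plan is to deduce the corollary from Theorem \ref{theorem:Hardouin} applied to a single extension of $\mathbf 1$ by $\mathcal Y^n$ that packages the data of the $\mathcal E_i$. First, the direct sum $\mathcal E := \mathcal E_1 \oplus \cdots \oplus \mathcal E_n$ sits in a short exact sequence $0 \to \mathcal Y^n \to \mathcal E \to \mathbf 1^n \to 0$; pulling back along the diagonal embedding $\mathbf 1 \hookrightarrow \mathbf 1^n$ produces an extension $\widetilde{\mathcal E} \in \text{Ext}^1(\mathbf 1, \mathcal Y^n)$ whose class is $([\mathcal E_1], \ldots, [\mathcal E_n])$ under the canonical identification $\text{Ext}^1(\mathbf 1, \mathcal Y^n) \cong \text{Ext}^1(\mathbf 1, \mathcal Y)^n$. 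A direct check shows that $\mathcal Y^n$ inherits hypotheses (1)--(3) of Theorem \ref{theorem:Hardouin} from $\mathcal Y$: its Galois group is a quotient of $\Gamma_{\mathcal Y}$ (so its representations remain completely reducible and the center still contains $\mathbb G_m$), while $\mathbb G_m$ acts isotypically on $\omega(\mathcal Y^n) = \omega(\mathcal Y)^n$. Reducedness of $\Gamma_{\widetilde{\mathcal E}}$ follows from reducedness of the $\Gamma_{\mathcal E_i}$ since $\widetilde{\mathcal E}$ lies in the Tannakian subcategory generated by the $\mathcal E_i$.

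Applying Theorem \ref{theorem:Hardouin} to $\widetilde{\mathcal E}$ yields a smallest sub-object $\mathcal V \subseteq \mathcal Y^n$ with $\widetilde{\mathcal E}/\mathcal V$ a trivial extension, together with the identification $R_u(\Gamma_{\widetilde{\mathcal E}}) \cong \omega(\mathcal V)$. The crux is to show $\mathcal V = \mathcal Y^n$. Argue by contradiction: if $\mathcal V \subsetneq \mathcal Y^n$, complete reducibility of $\mathcal Y^n$ produces a decomposition $\mathcal Y^n = \mathcal V \oplus \mathcal V'$ with $\mathcal V' \neq 0$; let $p : \mathcal Y^n \twoheadrightarrow \mathcal V'$ denote the associated projection. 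Since $\mathcal V' \neq 0$ and $\mathcal V' \subseteq \mathcal Y^n$, at least one coordinate projection $\pi_i : \mathcal Y^n \to \mathcal Y$ restricts to a nonzero morphism $\phi := \pi_i|_{\mathcal V'} : \mathcal V' \to \mathcal Y$. Then $\phi \circ p : \mathcal Y^n \to \mathcal Y$ is a nonzero morphism, which under $\Hom(\mathcal Y^n, \mathcal Y) \cong \End(\mathcal Y)^n$ corresponds to a nonzero tuple $(\phi_1, \ldots, \phi_n)$. Because $\phi \circ p$ factors through $\mathcal Y^n/\mathcal V$, the pushforward of $\widetilde{\mathcal E}$ along it factors through $\widetilde{\mathcal E}/\mathcal V$ and is therefore a trivial extension of $\mathbf 1$ by $\mathcal Y$; but this pushforward is exactly the Baer sum $\sum_{i=1}^n \phi_i \cdot [\mathcal E_i]$ in $\text{Ext}^1(\mathbf 1, \mathcal Y)$. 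The resulting identity $\sum_{i=1}^n \phi_i \cdot [\mathcal E_i] = 0$ contradicts the assumed $\End(\mathcal Y)$-linear independence of the classes $[\mathcal E_i]$.

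Hence $\mathcal V = \mathcal Y^n$ and $R_u(\Gamma_{\widetilde{\mathcal E}}) \cong \omega(\mathcal Y)^n$. To conclude, observe that $\widetilde{\mathcal E}$ and $\mathcal E_1 \oplus \cdots \oplus \mathcal E_n$ generate the same Tannakian subcategory of $\mathcal T$: the former is a sub-object of the latter (constructed as a pullback), and conversely each $\mathcal E_i$ is recovered as the pushforward of $\widetilde{\mathcal E}$ along the $i$-th projection $\mathcal Y^n \to \mathcal Y$. Consequently $\Gamma_{\mathcal E_1 \oplus \cdots \oplus \mathcal E_n}$ and $\Gamma_{\widetilde{\mathcal E}}$ share the same unipotent radical, which gives the stated isomorphism with $\omega(\mathcal Y)^n$. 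The main obstacle is the middle step: upgrading the purely categorical statement $\mathcal V \subsetneq \mathcal Y^n$ into a concrete nontrivial $\End(\mathcal Y)$-linear relation among the $[\mathcal E_i]$. This passage relies crucially on combining complete reducibility (to split off a complement $\mathcal V'$) with the identification $\Hom(\mathcal Y^n, \mathcal Y) \cong \End(\mathcal Y)^n$ and the functoriality of Baer sums under pushforward.
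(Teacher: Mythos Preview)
The paper does not supply its own proof of this corollary; it is quoted from Hardouin \cite{Har11} and used as a black box. Your deduction from Theorem~\ref{theorem:Hardouin} via the diagonal pullback $\widetilde{\mathcal E}\in\text{Ext}^1(\mathbf 1,\mathcal Y^n)$ is the natural route and matches how one would extract the corollary from the theorem in Hardouin's paper. The heart of your argument---using complete reducibility to split off a complement $\mathcal V'$, producing a nonzero map $\mathcal Y^n\to\mathcal Y$ that kills $\mathcal V$, and reading off a nontrivial $\End(\mathcal Y)$-relation among the $[\mathcal E_i]$ via pushforward---is correct and is exactly the intended mechanism.

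There is one soft spot. Your verification of hypothesis~(4) for $\widetilde{\mathcal E}$ asserts that reducedness of $\Gamma_{\widetilde{\mathcal E}}$ follows from reducedness of each $\Gamma_{\mathcal E_i}$ because $\widetilde{\mathcal E}$ lies in the Tannakian subcategory they generate. But this only gives a closed immersion $\Gamma_{\mathcal E_1\oplus\cdots\oplus\mathcal E_n}\hookrightarrow\prod_i\Gamma_{\mathcal E_i}$, and in positive characteristic a closed subgroup scheme of a reduced (even smooth) group scheme need not be reduced. So the step ``reduced $\Gamma_{\mathcal E_i}$ for all $i$ $\Rightarrow$ reduced $\Gamma_{\oplus_i\mathcal E_i}$'' is not justified by what you wrote. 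In the present paper this is harmless: every Galois group arising from a $t$-motive is smooth by Papanikolas's results (see the discussion after Theorem~\ref{theorem: Papanikolas}), so hypothesis~(4) holds automatically in all applications here. If you want a self-contained argument in Hardouin's abstract setting, you should either strengthen the hypothesis to reducedness of $\Gamma_{\mathcal E_1\oplus\cdots\oplus\mathcal E_n}$, or invoke an additional argument (as Hardouin does) explaining why the unipotent part is a vector group under assumptions~(1)--(3).
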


\section{Constructing $t$-motives connected to periods} \label{sec: tensor powers and motives}

From now on, we investigate the problem of determining algebraic relations between special zeta values and periods attached to $\rho$. For the Carlitz module, it was done by Chang and Yu \cite{CY07} using the machinery of Papanikolas \cite{Pap08} (see Section \ref{sec:Papanikolas}). For our setting, we will also need the results of Hardouin \cite{Har11} (see Section \ref{sec:Hardouin}).

\subsection{The $t$-motive associated to $\rho$} \label{sec: motive for rho} ${}$\par

We follow the construction given by Chang-Papanikolas \cite{CP11}, Sections 3.3 and 3.4. We consider $\rho:\Fq[t] \lra B\{\tau\}$ from Definition \ref{D:rhodef} as a Drinfeld $\Fq[t]$-module of rank $2$ by forgetting the $y$ action. We recall 
	\[ \rho_t=\theta+x_1 \tau + \tau^2, \quad x_1 \in B, \]
(see \cite[\S 3]{GP18} for more details on this construction).  For $u \in \bC_\infty$, we consider the associated Anderson generating function given by Equation \eqref{Eudef}:
\begin{equation*} 
E_u(t):=\sum_{n=0}^\infty \exp_\rho \left(\frac{u}{\theta^{n+1}} \right) t^n \in \mathbb{T}.
\end{equation*}
This function extends meromorphically to all of $\bC_\infty$ with simple poles at $t=\theta^{q^i}, i \geq 0$. Further, it satisfies the functional equation
	\[ \rho_t(E_u(t))=\exp_\rho(u)+t E_u(t). \]
In other words, we have
	\[ \theta E_u(t)+x_1 E_u(t)^{(1)}+E_u(t)^{(2)}=\exp_\rho(u)+t E_u(t). \]
Now we fix an $\Fq[t]$-basis $u_1 = \pi_\rho$ and $u_2 = \eta \pi_\rho$ of the period lattice $\Lambda_\rho$ of $\rho$. We set $E_i:=E_{u_i}$ for $i=1,2$. We define the following matrices:
\begin{align*}
\Phi_\rho &=
\begin{pmatrix}
0 & 1  \\
t-\theta & -x_1^{(-1)}
\end{pmatrix} \in M_{2 \times 2}(\overline K[t]), 
\quad
\Upsilon =
\begin{pmatrix}
E_1 & E_1^{(1)}  \\
E_2 & E_2^{(1)}
\end{pmatrix} \in M_{2 \times 2}(\bT),
\\
\Theta &=
\begin{pmatrix}
0 & t-\theta  \\
1 & -x_1
\end{pmatrix}  \in M_{2 \times 2}(\overline K[t]),
\quad
V =
\begin{pmatrix}
x_1 & 1  \\
1 & 0
\end{pmatrix}  \in M_{2 \times 2}(\overline K).
\end{align*}
Then we set 
	\[ \Psi_\rho:=V^{-1} (\Upsilon^{(1)})^{-1}. \]
Since $V^{(-1)} \Phi_\rho=\Theta V$ and $\Upsilon^{(1)}=\Upsilon \Theta$, we get
	\[ \Psi^{(-1)}_\rho=\Phi_\rho \Psi_\rho. \]

\subsection{The $t$-motive associated to $\rho^{\otimes n}$} \label{sec: motive for rho_n} ${}$\par

Let $n \geq 1$ be an integer. Recall the definition of $\rho^{\otimes n}$ from Definition \ref{D:rhodef}. By forgetting the $y$-action, the Anderson $\bA$-module $\rho^{\otimes n}$ can be considered as an Anderson $\Fq[t]$-module given by
	\[ \rho^{\otimes n}_t=d[\theta]+E_\theta \tau,  \]
(see section \ref{S:Review of Tensor Powers} for explicit formulas of $d[\theta]$ and $E_\theta$). For $\mathbf{u}=(u_1,\ldots,u_n)^{\top} \in M_{n \times 1}(\bC_\infty)$, we consider the associated Anderson generating function given by Equation \eqref{Eudef}:
\begin{align} \label{eq: Anderson function}
E^{\otimes n}_{\mathbf{u}}(t):=\sum_{i=0}^\infty \Exp^{\otimes n}_\rho \left(d[\theta]^{-(i+1)} \mathbf{u} \right) t^i \in \mathbb{T}^n.
\end{align}
It satisfies the functional equation
	\[ \rho_t(E^{\otimes n}_{\mathbf{u}}(t))=\Exp^{\otimes n}_\rho(\mathbf{u})+t E^{\otimes n}_{\mathbf{u}}(t). \]
As $n$ is fixed throughout this section, to simplify notation, we will suppress the dependence on $n$ and denote the coordinates of 
\[E^{\otimes n}_{\mathbf{u}}(t) := (E_{\bu,1},\dots,E_{\bu,n})^\top,\]
and similarly for other vector valued functions.  Recall the definitions of the functions $h_i$ and the coefficients $a_i$ and $b_i$ from Proposition \ref{P:aifacts} and the discussion preceding it.  For $1 \leq i \leq n$, we set 
\[
\Theta_i = \begin{pmatrix}
0 & 1\\
t-\theta &  -a_i
\end{pmatrix},\quad
\phi_i = \begin{pmatrix}
0 & 1\\
t-\theta &  -b_i
\end{pmatrix}.
\]

We have
\begin{align*}
\begin{pmatrix}
h_{i+1}  \\
h_{i+2} 
\end{pmatrix} =
\begin{pmatrix}
0 & 1  \\
t-\theta & -b_i
\end{pmatrix} 
\begin{pmatrix}
h_i  \\
h_{i+1} 
\end{pmatrix} =
\phi_i \begin{pmatrix}
h_i  \\
h_{i+1} 
\end{pmatrix}.
\end{align*}
We define
\begin{align*}
\Phi_\rho^{\otimes n}=\phi_n \ldots \phi_1 =
\begin{pmatrix}
0 & 1  \\
t-\theta & -b_n
\end{pmatrix} \cdots
\begin{pmatrix}
0 & 1  \\
t-\theta & -b_1
\end{pmatrix} \in M_{2 \times 2}(\overline K[t]).
\end{align*}
It follows that
\begin{align*}
\begin{pmatrix}
h_1  \\
h_2 
\end{pmatrix}^{(-1)} =
\begin{pmatrix}
h_{n+1}  \\
h_{n+2} 
\end{pmatrix} =
\Phi_\rho^{\otimes n}
\begin{pmatrix}
h_1  \\
h_2 
\end{pmatrix}.
\end{align*}

Now we fix an $\Fq[t]$-basis $\mathbf{u}_1 = \Pi_n$ and $\mathbf{u}_2 = d[\eta]\Pi_n$ of the period lattice $\Lambda_\rho^{\otimes n}$ of $\rho^{\otimes n}$, where $\Pi_n$ is defined in Proposition \ref{P:EuReview}(d). We denote by $E_\mathbf{i}$ the associated Anderson generating function.  In general,   Then 
	\[ \rho_t(E^{\otimes n}_{\mathbf{i}})=\Exp^{\otimes n}_\rho(\mathbf{u}_i)+t E^{\otimes n}_{\mathbf{i}}=t E^{\otimes n}_{\mathbf{i}}. \]
If we set
\begin{align*}
\Theta=\Theta_n \ldots \Theta_1=
\begin{pmatrix}
0 & 1  \\
t-\theta & -a_n
\end{pmatrix} \cdots
\begin{pmatrix}
0 & 1  \\
t-\theta & -a_1
\end{pmatrix} \in M_{2 \times 2}(\overline K[t]),
\end{align*}
and
\begin{equation}\label{D:UpsilonDef}
\Upsilon =
\begin{pmatrix}
E_{\mathbf{1},1} & E_{\mathbf{2},1}  \\
E_{\mathbf{1},2} & E_{\mathbf{2},2}
\end{pmatrix} \in M_{2 \times 2}(\mathbb{T}),
\end{equation}
then we obtain
\[
\Upsilon^{(1)}=\Theta \Upsilon.
\]
We define
\begin{align*}
V =
\begin{pmatrix}
a_n & 1  \\
1 & 0
\end{pmatrix}  \in M_{2 \times 2}(\overline K).
\end{align*}
Note that $V$ is symmetric and 
\begin{align*}
V^{(-1)} =
\begin{pmatrix}
b_n & 1  \\
1 & 0
\end{pmatrix}.
\end{align*}

We claim that
\begin{equation}\label{E:VPhiRelation}
 V^{(-1)} \Phi_\rho^{\otimes n}=(\Theta^{\top}) V.
\end{equation}
In fact, recall from Proposition \ref{P:aifacts} that $b_i=a_{n-i}$ for $1 \leq i \leq n-1$ and $a_n=b_n^q$. It is clear that for any $x \in \bC_\infty$, we have
\begin{align*}
\begin{pmatrix}
t-\theta & 0  \\
0 & 1
\end{pmatrix}
\begin{pmatrix}
0 & 1 \\
t-\theta & x
\end{pmatrix} =
\begin{pmatrix}
0 & t-\theta  \\
1 & x
\end{pmatrix}
\begin{pmatrix}
t-\theta & 0 \\
0 & 1
\end{pmatrix}.
\end{align*}
Then the claim follows immediately:
\begin{align*}
V^{(-1)} \Phi_\rho^{\otimes n} &=
\begin{pmatrix}
b_n & 1  \\
1 & 0
\end{pmatrix}
\begin{pmatrix}
0 & 1 \\
t-\theta & -b_n
\end{pmatrix} \cdots
\begin{pmatrix}
0 & 1 \\
t-\theta & -b_1
\end{pmatrix} \\
&= \begin{pmatrix}
t-\theta & 0  \\
0 & 1
\end{pmatrix}
\begin{pmatrix}
0 & 1 \\
t-\theta & -b_{n-1}
\end{pmatrix} \cdots
\begin{pmatrix}
0 & 1 \\
t-\theta & -b_1
\end{pmatrix} \\
&= \begin{pmatrix}
0 & t-\theta \\
1 & -b_{n-1}
\end{pmatrix}
\begin{pmatrix}
t-\theta & 0  \\
0 & 1
\end{pmatrix}
\begin{pmatrix}
0 & 1 \\
t-\theta & -b_{n-2}
\end{pmatrix} \cdots
\begin{pmatrix}
0 & 1 \\
t-\theta & -b_1
\end{pmatrix} \\
&= \cdots \\
&= \begin{pmatrix}
0 & t-\theta \\
1 & -b_{n-1}
\end{pmatrix} \cdots
\begin{pmatrix}
0 & t-\theta \\
1 & -b_1
\end{pmatrix}
\begin{pmatrix}
t-\theta & 0  \\
0 & 1
\end{pmatrix} \\
&= \begin{pmatrix}
0 & t-\theta \\
1 & -a_1
\end{pmatrix} \cdots
\begin{pmatrix}
0 & t-\theta \\
1 & -a_{n-1}
\end{pmatrix}
\begin{pmatrix}
t-\theta & 0  \\
0 & 1
\end{pmatrix} \\
&= \begin{pmatrix}
0 & t-\theta \\
1 & -a_1
\end{pmatrix} \cdots
\begin{pmatrix}
0 & t-\theta \\
1 & -a_{n-1}
\end{pmatrix}
\begin{pmatrix}
0 & t-\theta \\
1 & -a_n
\end{pmatrix}
\begin{pmatrix}
a_n & 1  \\
1 & 0
\end{pmatrix} \\
&= (\Theta^{\top}) V.
\end{align*}
We set
	\[ \Psi_\rho^{\otimes n}:=V^{-1} ((\Upsilon^{\top})^{(1)})^{-1}  \in M_{2 \times 2}(\mathbb{L}). \]
Thus we get
	\[ (\Psi^{\otimes n}_\rho)^{(-1)}=\Phi_\rho^{\otimes n} \Psi_\rho^{\otimes n}. \]	
	
\begin{remark} \label{remark: periods}
From the previous discussion, we have
\begin{align*}
(\Psi_\rho^{\otimes n})^{-1}=(\Upsilon^{\top})^{(1)} V=
\begin{pmatrix}
a_n E_{\mathbf{1},1}^{(1)}+E_{\mathbf{1},2}^{(1)} & E_{\mathbf{1},1}^{(1)}  \\
a_n E_{\mathbf{2},1}^{(1)}+E_{\mathbf{2},2}^{(1)} & E_{\mathbf{2},2}^{(1)}
\end{pmatrix}.
\end{align*}
By direct calculations (see Lemma \ref{L:ulemma}), we show that
	\[ [\Psi_\rho^{\otimes n}]^{-1}_{i,1}(\theta)=\mathbf{u}_{i,n} \in K \pi^n_\rho, \] 
where $\bu_{i,n}$ is the $n$th coordinate of the period $\bu_i$ for $i=1,2$.
\end{remark}

\subsection{Galois groups}	${}$\par

We denote by $\mathcal{X}^{\otimes n}_\rho$ the pre $t$-motive associated to $\rho^{\otimes n}$. The following proposition gives some basic properties of this $t$-motive (compare to \cite{CP11}, Theorem 3.5.4).

\begin{proposition} \label{proposition: Galois group}
1) The pre $t$-motive $\mathcal{X}^{\otimes n}_\rho$ is a $t$-motive.

2) The $t$-motive $\mathcal{X}^{\otimes n}_\rho$ is pure.  

3) Its Galois group $\Gamma_{\mathcal{X}_\rho^{\otimes n}}$ is $\textrm{Res}_{K/\Fq[t]} \mathbb{G}_{m,K}$. In particular, it is a torus.
\end{proposition}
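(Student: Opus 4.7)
The plan mirrors \cite[Thm.~3.5.4]{CP11}, which treats the analogous statement ($n=1$) for CM Drinfeld modules over $\mathbb{P}^1$, adapted here to the tensor-power / elliptic-curve setting.

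For parts (1) and (2), I would realize $\mathcal{X}^{\otimes n}_\rho$ as $\overline{K}(t)\otimes_{\overline{K}[t]} N_n$, where $N_n$ is the dual $\bA$-motive from Section \ref{S:Review of Tensor Powers} viewed as an Anderson dual $t$-motive by restricting scalars along $\Fq[t]\hookrightarrow\bA$; the condition $(t-\theta)^{d}N_n\subset\sigma N_n$ is inherited from the dual-$\bA$-motive condition on $N_n$. The functional equation $(\Psi_\rho^{\otimes n})^{(-1)}=\Phi_\rho^{\otimes n}\Psi_\rho^{\otimes n}$ established in Section \ref{sec: motive for rho_n}, together with $\Psi_\rho^{\otimes n}\in\mathrm{GL}_2(\mathbb{L})$ (coming from invertibility of the Anderson generating function matrix $\Upsilon$), furnishes a rigid analytic trivialization, so $\mathcal{X}^{\otimes n}_\rho$ is a $t$-motive. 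Purity follows since $N_1$ is pure (a rank-one Drinfeld--Hayes dual $\bA$-motive has a unique slope at $\infty$) and $\bA$-tensor powers preserve purity; explicitly $\det\Phi^{\otimes n}_\rho=(-1)^n(t-\theta)^n$, consistent with a uniform weight.

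For part (3), set $T:=\mathrm{Res}_{K/\Fq[t]}\mathbb{G}_{m,K}$. The containment $\Gamma_{\mathcal{X}^{\otimes n}_\rho}\subseteq T$ comes from the $\bA$-module structure: multiplication by $d[\eta]$ commutes with $\rho^{\otimes n}_t$, endowing $\mathcal{X}^{\otimes n}_\rho$ with an action of $\Kb=\Fq(t,y)$ by endomorphisms in the Tannakian category, so by Tannakian duality $\Gamma_{\mathcal{X}^{\otimes n}_\rho}$ centralizes this action. The torus $T$ is connected of dimension $2$, and its $\Fq(t)$-rational proper connected subtori are exactly the central $\mathbb{G}_m\subset T$ and the norm-one subtorus $T_1:=\ker(\mathrm{Nm}\colon T\to\mathbb{G}_m)$, corresponding to the two Galois-stable rank-one sublattices of the character lattice $\mathbb{Z}^2$ of $T_{\overline{\Fq(t)}}$.

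To upgrade containment to equality I would combine two exclusions. First, Proposition \ref{prop: simple modules} gives $\mathrm{End}_{\mathcal{T}}(\mathcal{X}^{\otimes n}_\rho)=\Kb$; by Tannakian duality the commutant of $\Gamma_{\mathcal{X}^{\otimes n}_\rho}$ in $\mathrm{Mat}_2(\Fq(t))$ is exactly $\Kb$, which strictly excludes both $\Gamma=\{1\}$ and $\Gamma=\mathbb{G}_m$ (whose commutants are the full matrix algebra). Second, the identity $\det\Phi_\rho^{\otimes n}=(-1)^n(t-\theta)^n$ forces $\det\Psi_\rho^{\otimes n}$ to satisfy the functional equation of the $n$-th power of the Anderson--Thakur $\omega$-function, so that $\det\Psi_\rho^{\otimes n}(\theta)$ is a nonzero $\overline{K}$-multiple of $\pi_\rho^n$, hence transcendental over $\overline{K}$ by Yu's theorem. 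Consequently $\det$ is nontrivial on $\Gamma_{\mathcal{X}^{\otimes n}_\rho}$, ruling out $\Gamma\subseteq T_1$, while also yielding $\dim\Gamma_{\mathcal{X}^{\otimes n}_\rho}\geq 1$ via Theorem \ref{theorem: Papanikolas}. Having eliminated all finite subgroups and every proper connected $\Fq(t)$-rational subgroup of $T$, I conclude $\Gamma_{\mathcal{X}^{\otimes n}_\rho}=T$, which is a torus. The main technical point I anticipate is the clean identification $\mathrm{End}_{\mathcal{T}}(\mathcal{X}^{\otimes n}_\rho)=\Kb$: Proposition \ref{prop: simple modules} supplies this equality at the level of Anderson $\Fq[t]$-modules, and it must be transported through the anti-equivalence of \cite{HJ16} with dual $\Fq[t]$-motives and then through base change from $\overline{K}[t]$ to $\overline{K}(t)$ to yield the Tannakian endomorphism algebra on the pre-$t$-motive side.
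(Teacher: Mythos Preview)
Your approach is correct in substance but differs meaningfully from the paper's for Part (3). For Parts (1) and (2) you are close to the paper: it too realizes $\mathcal{X}^{\otimes n}_\rho$ via $N_n$, but appeals to \cite[Prop.~3.20(b)]{HJ16} for uniformizability (rather than your explicit $\Psi_\rho^{\otimes n}$) and to \cite[Ex.~2.5, Prop.~4.9(e)]{HJ16} for purity, starting from $\rho$ as a rank-$2$ Drinfeld $\Fq[t]$-module.

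For Part (3) the paper takes a one-line route: having established that $\mathcal{X}^{\otimes n}_\rho$ is pure, uniformizable, and has complex multiplication by $A$ (via \cite[Lem.~7.3]{Gre17b}), it invokes the CM black box \cite[Thm.~6.5]{HJ16} to conclude $\Gamma_{\mathcal{X}^{\otimes n}_\rho}=\mathrm{Res}_{K/\Fq[t]}\mathbb{G}_{m,K}$ directly. Your argument instead adapts \cite[Thm.~3.5.4]{CP11}: containment in $T$ via the $\Kb$-action, then case-by-case elimination of proper connected $\Fq(t)$-subtori using the endomorphism algebra (ruling out the scalar $\mathbb{G}_m$) and the determinant (ruling out the norm-one torus), together with connectedness of $\Gamma$ from \cite{Pap08}. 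This is more hands-on and more self-contained, and it makes explicit \emph{why} the CM theorem holds in this rank-$2$ situation; the paper's citation is shorter but opaque.

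One point to correct: your claim that $\det\Psi_\rho^{\otimes n}(\theta)$ is a nonzero $\overline K$-multiple of $\pi_\rho^n$ is not right as stated. The functional equation $(\det\Psi_\rho^{\otimes n})^{(-1)}=(-1)^n(t-\theta)^n\det\Psi_\rho^{\otimes n}$ is that of the $n$-th tensor power of the \emph{Carlitz} motive over $\Fq[t]$, so $\det\Psi_\rho^{\otimes n}(\theta)$ is a $\overline K^\times$-multiple of $\widetilde{\pi}^{\,n}$, the Carlitz period, not of $\pi_\rho^n$ (these are periods of different Drinfeld modules). Your exclusion of $T_1$ survives unchanged, since $\widetilde{\pi}$ is transcendental by Wade's theorem, and hence $\det$ is nontrivial on $\Gamma$; but the identification with $\pi_\rho^n$ should be dropped. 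With that fix, your argument goes through.
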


\begin{proof} 
For Part 1, since $\rho^{\otimes n}$ is an $A$-motive of rank $1$, it follows from \cite{HJ16}, Proposition 3.20, Part b that $\rho^{\otimes n}$ is uniformizable as an $A$-motive. It implies that $\rho^{\otimes n}$ is uniformizable as an $\Fq[t]$-motive. Thus Part 1 follows immediately.

For Part 2, if $n=1$ then $\rho$ is a Drinfeld $\Fq[t]$-module. By \cite{HJ16}, Example 2.5, we know that $\rho$ is pure. Thus the dual $t$-motive $N$ is pure, which implies $N^{\otimes n}$ is also pure by \cite{HJ16}, Proposition 4.9, Part e. Hence we get the purity of $\mathcal{X}^{\otimes n}_\rho$.

We now prove Part 3. To calculate the Galois group $\Gamma_{\mathcal{X}_\rho^{\otimes n}}$ associated to the $t$-motive $\rho^{\otimes n}$, we will use \cite{HJ16}, Theorem 6.5. We claim that the $t$-motive $\rho^{\otimes n}$ verifies all the conditions of this theorem. In fact, it is a pure uniformizable dual $\Fq[t]$-motive thanks to Part 2. Further, it has complex multiplication since $\End_{\bC_\infty}(M)=\End_{\bC_\infty}(\rho^{\otimes n})=A$ by \cite{Gre17b}, Lemma 7.3. Thus we apply \cite{HJ16}, Theorem 6.5 to the $t$-motive $\rho^{\otimes n}$ to obtain
	\[ \Gamma_{\mathcal{X}_\rho^{\otimes n}}=\textrm{Res}_{K/\Fq[t]} \mathbb{G}_{m,K}. \]
The proof is complete.
\end{proof}
	
\begin{remark}	
1) We note that the Galois group associated to the $A$-motive $\rho^{\otimes n}$ is also equal to $\textrm{Res}_{K/\Fq[t]} \mathbb{G}_{m,K}$ (see \cite{HJ16}, Example 3.24). 

2) We should mention a similar result of Pink and his group that determines completely the Galois group of a Drinfeld $A$-module (see \cite{HJ16}, Theorem 6.3 and also \cite{CP12}, Theorem 3.5.4 for more details). It states that if $M$ is the $t$-motive associated to a Drinfeld $A$-module, then 
	\[ \Gamma_M=\text{Cent}_{\text{GL}(H_{\text{Betti}}(M))} \End_{\bC_\infty}(M). \]
\end{remark}	

\begin{proposition}
The entries of $\Psi_\rho^{\otimes n}$ are regular at $t=\theta$ and we have
	\[ \text{tr. deg}_{\overline K} \overline K(\Psi_\rho^{\otimes n}(\theta))=\dim \Gamma_{\mathcal{X}_\rho^{\otimes n}}. \]
\end{proposition}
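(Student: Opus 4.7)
The plan is to apply Papanikolas' transcendence criterion (Theorem~\ref{theorem: Papanikolas}). The matrix $\Phi_\rho^{\otimes n}$ built in Section~\ref{sec: motive for rho_n} lies in $\mathrm{GL}_2(\overline K(t))\cap M_{2\times 2}(\overline K[t])$ and has $\det\Phi_\rho^{\otimes n}=(-1)^n(t-\theta)^n$, which is of the form $c(t-\theta)^s$ required by the theorem; by construction, $\Psi_\rho^{\otimes n}$ is a rigid analytic trivialization of $\Phi_\rho^{\otimes n}$. Proposition~\ref{proposition: Galois group}(3) identifies $\Gamma_{\mathcal X_\rho^{\otimes n}}$ with the two-dimensional torus $\mathrm{Res}_{K/\Fq[t]}\mathbb G_{m,K}$, so once $\Psi_\rho^{\otimes n}$ is known to be regular at $t=\theta$, the theorem will immediately yield
\[
  \text{tr.}\deg_{\overline K}\overline K(\Psi_\rho^{\otimes n}(\theta))=\dim\Gamma_{\mathcal X_\rho^{\otimes n}}=2.
\]

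\textbf{Regularity at $t=\theta$.} The main point is thus to establish this regularity. I would first show that every entry of $(\Psi_\rho^{\otimes n})^{-1}=(\Upsilon^\top)^{(1)}V$ is holomorphic in a neighborhood of $\theta$. Since $V\in M_{2\times 2}(\overline K)$ is constant, this reduces to treating the entries of $\Upsilon^{(1)}$, namely the Frobenius twists $E_{\bu_i,j}^{(1)}$ of the Anderson generating functions. From the defining series~\eqref{eq: Anderson function}, the $k$-th coefficient of $E_{\bu_i,j}$ is bounded in absolute value by a constant multiple of $|\theta|^{-(k+1)}$, so its radius of convergence in $t$ equals $|\theta|$. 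The Frobenius twist $f\mapsto f^{(1)}$ raises every power-series coefficient to the $q$-th power while leaving $t$ untouched, and the non-archimedean Cauchy--Hadamard formula therefore gives $E_{\bu_i,j}^{(1)}$ radius of convergence $|\theta|^q>|\theta|$; in particular $E_{\bu_i,j}^{(1)}$ is holomorphic on a disk containing $\theta$. Hence $(\Psi_\rho^{\otimes n})^{-1}$ extends holomorphically to a neighborhood of $\theta$; to conclude the same for $\Psi_\rho^{\otimes n}$ itself, it remains only to check that $(\Psi_\rho^{\otimes n})^{-1}(\theta)$ is invertible.

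\textbf{Invertibility and conclusion.} Remark~\ref{remark: periods} already identifies the first column of $(\Psi_\rho^{\otimes n})^{-1}(\theta)$ with the vector $(\bu_{1,n},\bu_{2,n})^\top\in (K\pi_\rho^n)^2$, which is nonzero because $\{\Pi_n,d[\eta]\Pi_n\}$ is an $\Fq[t]$-basis of the period lattice $\Lambda_\rho^{\otimes n}$. To rule out that the second column becomes an $\overline K$-multiple of the first at $\theta$, I would take determinants in the Frobenius relation $(\Psi_\rho^{\otimes n})^{(-1)}=\Phi_\rho^{\otimes n}\Psi_\rho^{\otimes n}$ to obtain the scalar equation
\[
  (\det\Psi_\rho^{\otimes n})^{(-1)}=(-1)^n(t-\theta)^n\det\Psi_\rho^{\otimes n},
\]
and compare it with the function $\omega_\rho$ of~\eqref{omegarhoprod}, which satisfies $\omega_\rho^{(1)}=f\omega_\rho$ and is known to be holomorphic and nonvanishing at $\theta$, with $\omega_\rho(\theta)$ an explicit nonzero multiple of $\pi_\rho$. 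Iterating the two functional equations and matching leading behaviors should express $\det\Psi_\rho^{\otimes n}$, up to a constant in $\Fq(t)$, as an explicit meromorphic function built from $\omega_\rho$ and its Frobenius twists, forcing $\det\Psi_\rho^{\otimes n}(\theta)\neq 0$. I expect this bookkeeping --- pinning down the $\Fq(t)$-constant of proportionality so as to conclude non-vanishing rather than an uncontrolled zero --- to be the main technical obstacle. Once it is resolved, Theorem~\ref{theorem: Papanikolas} delivers the required transcendence degree identity.
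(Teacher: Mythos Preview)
Your direct analytic approach is different from the paper's, and it has a genuine gap beyond the determinant bookkeeping you flagged. The hypothesis of Theorem~\ref{theorem: Papanikolas} is that the rigid analytic trivialization lie in $\mathrm{GL}_r(\bT)$, not merely that it be regular at $t=\theta$. As constructed in Section~\ref{sec: motive for rho_n}, $\Psi_\rho^{\otimes n}$ is only known to lie in $\mathrm{GL}_2(\mathbb L)$; even if you succeed in showing that $(\Psi_\rho^{\otimes n})^{-1}(\theta)$ is invertible, you have not placed $\Psi_\rho^{\otimes n}$ itself in $\mathrm{GL}_2(\bT)$, so the theorem does not apply to it directly. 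Your outline does not address this point.

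The paper sidesteps both issues---the regularity at $\theta$ and the $\bT$-membership---with a single soft argument. By \cite[Prop.~3.3.9(c) and \S4.1.6]{Pap08}, any two rigid analytic trivializations of $\Phi_\rho^{\otimes n}$ differ by a matrix in $\mathrm{GL}_2(\Fq(t))$, and among them there is one $\widetilde\Psi\in\mathrm{GL}_2(\bT)$; by \cite[Prop.~3.1.3]{ABP04} the entries of $\widetilde\Psi$ are in fact entire on $\bC_\infty$, hence regular at $\theta$. Writing $\Psi_\rho^{\otimes n}=\widetilde\Psi\,U^{-1}$ with $U\in\mathrm{GL}_2(\Fq(t))$, one notes that the entries of $U^{-1}$ (rational functions with $\Fq$-coefficients) can have poles only at algebraic points over $\Fq$, never at the transcendental $\theta$; so $\Psi_\rho^{\otimes n}$ is regular at $\theta$. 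Since $U(\theta)\in\mathrm{GL}_2(\overline K)$, one has $\overline K(\Psi_\rho^{\otimes n}(\theta))=\overline K(\widetilde\Psi(\theta))$, and Theorem~\ref{theorem: Papanikolas} applied to $\widetilde\Psi$ finishes the argument. This avoids entirely the explicit determinant comparison with $\omega_\rho$ that you identified as the main obstacle.
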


\begin{proof}
By \cite{Pap08}, Proposition 3.3.9 (c) and Section 4.1.6, there exists a matrix $U \in \text{GL}_2(\Fq(t))$ such that $\widetilde{\Psi}=\Psi_\rho^{\otimes n} U$ is a rigid analytic trivialization of $\Phi_\rho^{\otimes n}$ and $\widetilde{\Psi} \in \text{GL}_2(\mathbb T)$. By \cite{ABP04}, Proposition 3.1.3, the entries of $\widetilde{\Psi}$ converge for all values of $\bC_\infty$. Thus the entries of $\widetilde{\Psi}$ and $U^{-1}$ are regular at $t=\theta$. It implies that the entries of $\Psi_\rho^{\otimes n}$ are regular at $t=\theta$.

Further, since $\overline K(\Psi_\rho^{\otimes n}(\theta))=\overline K(\widetilde{\Psi}(\theta))$, by Theorem \ref{theorem: Papanikolas}, we have 
	\[ \text{tr. deg}_{\overline K} \overline K(\Psi_\rho^{\otimes n}(\theta))=\text{tr. deg}_{\overline K} \overline K(\widetilde{\Psi}(\theta))=\dim \Gamma_{\mathcal{X}_\rho^{\otimes n}}. \]
The proof is finished.
\end{proof}

\subsection{Endomorphisms of $t$-motives} \label{sec: endos} ${}$\par

We write down explicitly the endomorphism of $\mathcal{X}^{\otimes n}_\rho$ given by $y \in A$. By similar arguments to \cite{Gre17a}, Proposition 4.2, there exist $y_i, z_i \in H$ such that for $1 \leq i \leq n$, we have 
	\[ yh_i=\eta h_i + y_i h_{i+1}+z_i h_{i+2}+h_{i+3}.\]
We deduce that there exists $M_y \in M_{2 \times 2}(\overline K[t])$ such that the endomorphism $y$ in the basis $\{h_1,h_2\}$ is represented by $M_y$:
\begin{align*}
y \begin{pmatrix}
h_1  \\
h_2 
\end{pmatrix} =M_y 
\begin{pmatrix}
h_1  \\
h_2 
\end{pmatrix}.
\end{align*}

More generally, we consider any element $a \in A$ as an endomorphism of $\mathcal{X}^{\otimes n}_\rho$ . We denote by $M \in M_{2 \times 2}(\overline K[t])$ such that the endomorphism $a$ in the basis $\{h_1,h_2\}$ is represented by $M$:
\begin{align*}
a \begin{pmatrix}
h_1  \\
h_2 
\end{pmatrix} =M 
\begin{pmatrix}
h_1  \\
h_2 
\end{pmatrix}.
\end{align*}

\begin{lemma}[Compare to \cite{CP11}, Proposition 4.1.1] \label{lemma:endo}
With the above notation, $M_{1,1}(\theta) \in K^\times$ and $M_{2,1}(\theta)=0$
\end{lemma}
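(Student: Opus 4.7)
The plan is to evaluate the defining identities $a h_i = M_{i1}(t) h_1 + M_{i2}(t) h_2$ (which hold in $N_n$ viewed as functions on $X$, with $\{h_1, h_2\}$ a $\overline K[t]$-basis of $N_n$ of rank $2$) at the point $\Xi = (\theta, \eta) \in X(K)$, exploiting the explicit divisors of $h_1$ and $h_2$ recorded in \eqref{hidivisor}.

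From \eqref{hidivisor} with $j = 2$, we read off $\divisor(h_2) = n(V\twist) - (n+2)(\infty) + (\Xi) + (-[n-1]V\twist - V)$, so $h_2$ has a simple zero at $\Xi$; in particular $h_2(\Xi) = 0$. With $j = 1$, $\divisor(h_1) = n(V\twist) - (n+1)(\infty) + (-[n]V\twist)$, and $\Xi$ does not belong to its support $\{V\twist, -[n]V\twist, \infty\}$: indeed, since $V \in X(H)$ is chosen in the formal group of $X$ at $\infty$ and has coordinates of strictly positive degree (larger than $\deg \theta$ and $\deg \eta$), the relation $V\twist = V - \Xi$ forces $V\twist \neq \Xi$, and a parallel degree comparison gives $-[n]V\twist \neq \Xi$. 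Hence $h_1(\Xi) \in H^\times$.

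Specializing $a h_i = M_{i1}(t) h_1 + M_{i2}(t) h_2$ at $(t, y) = (\theta, \eta)$ produces the scalar identity $a(\Xi)\, h_i(\Xi) = M_{i1}(\theta)\, h_1(\Xi) + M_{i2}(\theta)\, h_2(\Xi)$ in $H$, where $a(\Xi) \in K$ is the value of the polynomial $a \in A$ at $\Xi$. For $i = 2$, both the left-hand side and the $M_{22}$-contribution vanish, so $M_{21}(\theta)\, h_1(\Xi) = 0$; dividing by the nonzero scalar $h_1(\Xi)$ yields $M_{21}(\theta) = 0$. For $i = 1$, the $M_{12}$-contribution vanishes, giving $M_{11}(\theta)\, h_1(\Xi) = a(\Xi)\, h_1(\Xi)$ and hence $M_{11}(\theta) = a(\Xi) \in K^\times$ (nonzero since $a \neq 0$).

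The only technical point is the verification that $h_1(\Xi) \neq 0$, that is, that $\Xi$ avoids $\{V\twist, -[n]V\twist\}$; this follows from the explicit form of the Drinfeld divisor $V$ (cf.\ \cite{GP18}) by an elementary degree comparison and does not present a substantial obstacle. The conceptual mechanism is simply that evaluation at $\Xi$ kills $h_2$ but not $h_1$, so the identity $a \mathbf{h} = M \mathbf{h}$ restricted to the fiber at $\Xi$ becomes upper-triangular with the desired diagonal entry $a(\Xi)$.
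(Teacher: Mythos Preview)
Your proof is correct and follows the same approach as the paper's: specialize the identity $a\,\mathbf{h} = M\,\mathbf{h}$ at $\Xi$ and use $h_1(\Xi)\neq 0$, $h_2(\Xi)=0$ to read off the two conclusions. The paper simply asserts these two facts about $h_1(\Xi)$ and $h_2(\Xi)$, whereas you supply the divisor computation from \eqref{hidivisor} and a degree argument to justify them; otherwise the arguments are identical.
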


\begin{proof}
(Compare to \cite{CP11}, Proposition 4.1.1) Specializing the above equality at $\Xi$ and recalling that $h_1(\Xi) \neq 0$ and $h_2(\Xi) = 0$, we obtain $M_{1,1}(\theta)=a(\theta) \in K^\times$ and $M_{2,1}(\theta)=0$.
\end{proof}


\section{Constructing $t$-motives connected to logarithms} \label{section: log motives}

\subsection{Logarithms attached to $\rho$} ${}$\par \label{section: log rho}

We keep the notation of Section \ref{sec: motive for rho}. Following Chang-Papanikolas (see \cite{CP11}, Section 4.2), for $u \in \bC_\infty$ with $\exp_\rho(u)=v \in \overline K$, we consider $E_u$ the associated Anderson generating function to $u$ given by \eqref{Eudef}. We set
\begin{align*}
h_v =
\begin{pmatrix}
v  \\
0
\end{pmatrix}  \in M_{2 \times 1}(\overline K),
\quad
\Phi_v =
\begin{pmatrix}
\Phi_\rho & 0  \\
h_v^{\top} & 1
\end{pmatrix} \in M_{3 \times 3}(\overline K[t]).
\end{align*}
We define
\begin{align*}
g_v &=V \begin{pmatrix}
-E_u^{(1)}  \\
-E_u^{(2)}
\end{pmatrix} = 
\begin{pmatrix}
-(t-\theta)E_u-v  \\
-E_u^{(1)}
\end{pmatrix} \in M_{2 \times 1}(\mathbb{T}), \\
\Psi_v &=
\begin{pmatrix}
\Psi_\rho & 0  \\
g_v^{\top} \Psi_\rho & 1
\end{pmatrix} \in M_{3 \times 3}(\mathbb{T}).
\end{align*}
Then we get
	\[ \Phi_\rho^{\top} g_v^{(-1)}=g_v+h_v,\]
and 
	\[ \Psi_v^{(-1)}=\Phi_v \Psi_v.\]
The associated pre-motive $\mathcal{X}_v$ is in fact a $t$-motive in the sense of Papanikolas. 

For $v:=\exp_\rho(\zeta_\rho( b,1)) \in H$, we call the corresponding $t$-motive $\mathcal{X}_v$ the zeta motive associated to $\zeta_\rho( b,1)$.

\subsection{Logarithms attached to $\rho^{\otimes n}$} ${}$\par \label{section: log rho n}

We now switch to the general case and use freely the notation of Section \ref{sec: motive for rho_n}. For $\mathbf{u} \in M_{n \times 1}(\bC_\infty)$ with $\Exp_\rho^{\otimes n}(\mathbf{u})=\mathbf{v} \in M_{n \times 1}(\overline K)$, we consider $E_\mathbf{u}\on$ the Anderson generating function associated to $\mathbf{u}$ given by Equation \eqref{eq: Anderson function}. Recall that
\begin{equation}\label{E:EuFunctionalEquation}
 \rho_t(E^{\otimes n}_{\mathbf{u}}(t))=\Exp^{\otimes n}_\rho(\mathbf{u})+t E^{\otimes n}_{\mathbf{u}}(t)=\mathbf{v}+t E^{\otimes n}_{\mathbf{u}}(t).
\end{equation}

Thus we get
\begin{align*}
\begin{pmatrix}
E_{\mathbf{u},i+1}  \\
E_{\mathbf{u},i+2} 
\end{pmatrix} =
\begin{pmatrix}
0 & 1  \\
t-\theta & -a_i
\end{pmatrix} 
\begin{pmatrix}
E_{\mathbf{u},i}  \\
E_{\mathbf{u},i+1} 
\end{pmatrix}+
\begin{pmatrix}
0  \\
-v_i 
\end{pmatrix}=
\Theta_i
\begin{pmatrix}
E_{\mathbf{u},i}  \\
E_{\mathbf{u},i+1} 
\end{pmatrix}+
\begin{pmatrix}
0  \\
-v_i 
\end{pmatrix}.
\end{align*}

We define the vector $f_\bv:= (f_{\bv,1},f_{\bv,2})^\top$ given by
\begin{equation}\label{D:fvdef}
f_\bv = \Theta_n \cdots \Theta_2  \begin{pmatrix}
0 \\
v_1
\end{pmatrix} + 
\Theta_n \cdots \Theta_3  \begin{pmatrix}
0 \\
v_2
\end{pmatrix}
+
\cdots
+
\begin{pmatrix}
0 \\
v_n
\end{pmatrix}.
\end{equation}
It follows that
\begin{align*}
\begin{pmatrix}
E_{\mathbf{u},1}  \\
E_{\mathbf{u},2} 
\end{pmatrix}^{(1)} =
\Theta
\begin{pmatrix}
E_{\mathbf{u},1}  \\
E_{\mathbf{u},2} 
\end{pmatrix}-
\begin{pmatrix}
f_{\mathbf{v},1}  \\
f_{\mathbf{v},2}
\end{pmatrix}.
\end{align*}
Here we recall 
\begin{align*}
\Theta=
\begin{pmatrix}
0 & 1  \\
t-\theta & -a_n
\end{pmatrix} \cdots
\begin{pmatrix}
0 & 1  \\
t-\theta & -a_1
\end{pmatrix}
\end{align*}
and
\begin{align*}
\Upsilon =
\begin{pmatrix}
E_{\mathbf{1},1} & E_{\mathbf{2},1}  \\
E_{\mathbf{1},2} & E_{\mathbf{2},2}
\end{pmatrix} \in M_{2 \times 2}(\mathbb{T}).
\end{align*}
They verify 
	\[ \Upsilon^{(1)}=\Theta \Upsilon.\]
We set
\begin{align*}
\Theta_\mathbf{v} =
\begin{pmatrix}
\Theta & -(f_{\mathbf{v},1},f_{\mathbf{v},2})^\top  \\
0 & 1
\end{pmatrix} \in M_{3 \times 3}(\overline K[t]),
\quad
\Upsilon_\mathbf{v} =
\begin{pmatrix}
\Upsilon & (E_{\mathbf{u},1},E_{\mathbf{u},2})^\top  \\
0 & 1
\end{pmatrix} \in M_{3 \times 3}(\mathbb{T}).
\end{align*}
Then we get
	\[ \Upsilon^{(1)}_\mathbf{v}=\Theta_\mathbf{v} \Upsilon_\mathbf{v}.\]

Now we are ready to construct the associated rigid analytic trivialization. Recall that
\begin{align*}
V =
\begin{pmatrix}
a_n & 1  \\
1 & 0
\end{pmatrix}.
\end{align*}
Note that $V$ is symmetric. We set
\begin{align*}
W &= \text{diag}(V,1)=
\begin{pmatrix}
V & 0 \\
0 & 1
\end{pmatrix}  \in \text{GL}_3(\overline K), \\
\Phi_\mathbf{v} &=(W^{(-1)})^{-1} (\Theta_\mathbf{v}^{\top}) W =\begin{pmatrix}
\Phi_\rho^{\otimes n} & 0  \\
(h_{\mathbf{v},1},h_{\mathbf{v},2}) & 1
\end{pmatrix} \in M_{3 \times 3}(\overline K[t]), \\
\Psi_\mathbf{v} &=W^{-1} ((\Upsilon_\mathbf{v}^{\top})^{(1)})^{-1}=\begin{pmatrix}
\Psi_\rho^{\otimes n} & 0  \\
(g_{\mathbf{v},1},g_{\mathbf{v},2})\Psi_\rho^{\otimes n}  & 1
\end{pmatrix} \in M_{3 \times 3}(\mathbb{T}).
\end{align*}
\begin{remark} \label{remark: periods2}
Note that by direct calculations, we obtain
	\[ (g_{\mathbf{v},1},g_{\mathbf{v},2})\Psi_\rho^{\otimes n}=-(a_n E_{\mathbf{u},1}^{(1)}+E_{\mathbf{u},2}^{(1)}, E_{\mathbf{u},1}^{(1)}).\]
Further, we will show below that $g_{\mathbf{v},1}(\theta)=u_n-v_n$, where $u_n$ and $v_n$ are the last coordinate of $\bu$ and $\bv$ respectively (see Lemma \ref{L:ulemma}).	
\end{remark}
	
Thus we get
	\[ \Psi_\mathbf{v}^{(-1)}=\Phi_\mathbf{v} \Psi_\mathbf{v}.\]	
The associated pre-motive $\mathcal{X}_\mathbf{v}$ is in fact a $t$-motive because it is an extension of two $t$-motives (see for example \cite{HJ16}, Lemma 4.20). 

\subsection{Period Calculations} \label{sec: period calculations} ${}$\par
\fantome{ 

Let $u \in \bC_\infty$ and $E_u$ be the Anderson generating function defined above.  Also recall that $G_u$ is defined by Equation \eqref{Gudef}
\[G_u = (y + c_1t + c_3)E_u + E_{\eta u},\]
and that the shtuka function can be written as $f=\nu/\delta$ (see \ref{D:deltadiv}).
\begin{lemma}  Then we find that
\[G_{\pi_\rho}\twist\big |_\Xi = -\delta\twist\big|_\Xi\cdot \pi_\rho.\]
\end{lemma}
\begin{proof}
We begin with 
\[G_{\pi_\rho} = \omega_\rho,\]
from \cite[(66)]{GP18}, therefore $G_{\pi_\rho}\twist = f G_{\pi_\rho}$.  We also have that
\begin{align*}
-\pi_\rho &= \Res_\Xi(G_{\pi_\rho}\frac{dt}{2y})\\
&= (t-\theta)(G_{\pi_\rho} \frac{1}{2y}) \big|_\Xi\\
&= f G_{\pi_\rho} \cdot\frac{t-\theta}{2fy}\big|_\Xi\\
&= G_{\pi_\rho}\twist \cdot\frac{t-\theta}{2fy}\big|_\Xi.
\end{align*}
Then from equation \cite[(32)]{GP18} we deduce that
\[G_{\pi_\rho}\twist \big|_\Xi = -\delta\twist(\Xi) \pi_\rho.\]
\end{proof}

\begin{corollary}
The period $\pi_\rho$ is contained in $\overline K(\Psi_\rho(\theta))$.
\end{corollary}
\begin{proof}
We see quickly that $\overline K(\Psi_\rho(\theta)) = \overline K(\Upsilon(\theta))$, hence $E_i\twistk{j}(\theta) \in \overline K(\Psi_\rho(\theta))$ for $1\leq i,j\leq 2$.  Then, since $u_1 = \pi_\rho$ and $u_2 = \eta\pi_\rho$, we observe that
\[G_{\pi_\rho}\twist(\Xi) = (\eta^q + c_1\theta^q + c_3) E_{\pi_\rho}\twist(\theta) + E_{\eta\pi_\rho}\twist(\theta)= (\eta^q + c_1\theta^q + c_3) E_{1}\twist(\theta) + E_{2}\twist(\theta),\]
which finishes the proof.
\end{proof}

We enact a similar analysis for the periods of $\rho^{\otimes n}$.  Let $\Pi_n \in \bC_\infty^n$ be a fundamental period for $\Exp_\rho^{\otimes n}$.  For $\bu\in \bC_\infty^n$, recall from \eqref{Gudef} that
\[G_\bu\on(t,y) = E_{d[\eta]\bu}\on(t) + (y + c_1t + c_3)E_\bu\on(t) \]
and from Proposition \ref{P:EuReview}(d) that
\[G_{\Pi_n}\on = T(\omega_\rho^n) = 
\left (\begin{matrix}
\omega_\rho^n\cdot g_1  \\
\omega_\rho^n\cdot g_2  \\
\vdots  \\
\omega_\rho^n\cdot g_n \\
\end{matrix}\right ),\]
where the $g_i$ are basis elements of the dual $A$-motive and $\omega_\rho$ is the special function given by Eq. \eqref{omegarhoprod} such that $\omega_\rho\twist = f\omega_\rho$.  
\begin{lemma}
There exists some nonzero $\alpha \in \overline K$ such that
\[(\omega_\rho^n)\twist(\Xi) = \alpha \pi_\rho^n.\]
\end{lemma}
\begin{proof}
Note that because $\Res_\Xi(\omega_\rho\lambda) = -\pi_\rho$ and since $\omega_\rho$ has simple poles at $\Xi\twisti$ for $i\geq 0$, we have that
\[\omega_\rho = \xi \pi_\rho (t-\theta)\inv + a_0 + a_1(t-\theta) + \dots,\]
for some $\xi\in \overline K$ and hence
\[\omega_\rho^n = \xi^n\pi_\rho^n(t-\theta)^{-n} + O((t-\theta)^{-n+1}).\]
Thus we conclude for some $\beta,\gamma \in \overline K$ that
\begin{align*}
\beta \pi_\rho^n &= \Res_\Xi((t-\theta)^{n-1}\omega_\rho^n\lambda) \\
&= f^n \omega_\rho^n \frac{(t-\theta)^n}{2yf^n}\big|_\Xi\\
&= (\omega_\rho^n)\twist \cdot\gamma\big|_\Xi.
\end{align*}
In other words, we find that $(\omega_\rho^n(\Xi))\twist = \alpha \pi_\rho^n$ for some algebraic $\alpha \in \overline K$.
\end{proof}

\begin{proposition}
The quantity $\pi_\rho^n$ is contained in $\overline K(\Psi_\rho^{\otimes n}(\theta))$.
\end{proposition}
\begin{proof}
As above we have that $\overline K(\Psi_\rho\on(\theta)) = \overline K(\Upsilon(\theta))$.  Since we have a basis for the period lattice $\bu_1 = \Pi_n$ and $\bu_2 = d[\eta]\Pi_n$ we find that
\[(\eta^q + c_1\theta^q + c_3) E_{\mathbf{1},1}\twist(\theta) + E_{\mathbf{2},1}\twist(\theta) = G_1\twist(\Xi) = g_1\twist (\omega_\rho^n)\twist(\Xi) = g_1(\Xi)\alpha \pi_\rho^n.\]
\end{proof}

Now let $u\in \bC_\infty$ and define $E_u$ and $G_u$ as above, and assume that $\exp_\rho(u) = z \in \overline K$.  The case we wish to keep in mind is 
$u=\zeta_A(1)$ and thus $\exp_\rho(u) = z \in \overline K$.  From \cite[Prop. 5.2]{GP18} we see that 
\[(\tau - f)(G_u - G_u(V)) = \delta\twist\cdot z,\]
where $V$ is the Drinfeld divisor associated to $\rho$.  Thus we calculate that
\[\tau G_u = G_u\twist = fG_u + \delta\twist z + (\tau-f)(G(V)),\]
and using \cite[(32) and (62)]{GP18} we find that
\[G_u\twist(\Xi) = -\frac{u}{\delta\twist(\Xi)} + \delta\twist(\Xi)z + z^{q^2} + m^qz^q.\]

} 
In this section we show explicitly how to obtain the periods and zeta values discussed in the previous section from evaluations of the entries of the rigid analytic trivialization $\Psi_\bv$.
\begin{lemma}\label{L:ResEu}
Let $\bu = (u_1,\dots,u_n)^\top \in \bC_\infty^n$ and let $E_u\on$ be defined as above.  Then
\[\Res_\theta(E_\bu\on dt) = 
\left (\begin{matrix}
\Res_\theta(E_{\bu,1}dt)  \\
\vdots  \\
\Res_\theta(E_{\bu,n}dt) \\
\end{matrix}\right )
=
-\bu\]
\end{lemma}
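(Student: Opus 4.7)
The plan is to isolate the pole of $E_\bu\on$ at $t=\theta$ by invoking the rational-function decomposition from Proposition \ref{P:EuReview}(a),
\[
E_\bu\on(t) = \sum_{j=0}^\infty Q_j \bigl(d[\theta]\twistk{j} - tI\bigr)^{-1} \bu\twistk{j}.
\]
For every $j \geq 1$, the matrix $d[\theta]\twistk{j}$ is upper triangular with its single eigenvalue equal to $\theta^{q^j} \neq \theta$, so $(d[\theta]\twistk{j} - tI)^{-1}$ is holomorphic at $t=\theta$. Hence the $j \geq 1$ tail contributes nothing to the residue, and the computation collapses to the single term $(d[\theta] - tI)^{-1}\bu$, since $Q_0 = I$.

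For this reduced calculation, I would write $d[\theta] = \theta I + N$, where $N$ is the strictly upper triangular (hence nilpotent, with $N^n=0$) matrix read off from \eqref{taction}. Setting $s = t-\theta$, I would then expand the geometric series
\[
(d[\theta] - tI)^{-1} = -(sI - N)^{-1} = -\sum_{k=0}^{n-1}(t-\theta)^{-k-1} N^k.
\]
Reading off the coefficient of $(t-\theta)^{-1}$ in $(d[\theta] - tI)^{-1}\bu$ immediately gives $-\bu$, so $\Res_\theta\bigl((d[\theta] - tI)^{-1}\bu\, dt\bigr) = -\bu$, which is the asserted identity.

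The main point that needs care is justifying that the residue commutes with the infinite sum, or equivalently that the tail $\sum_{j \geq 1} Q_j (d[\theta]\twistk{j} - tI)^{-1}\bu\twistk{j}$ genuinely defines a vector-valued function holomorphic in some punctured neighbourhood of $\theta$. This should follow from the fact that $\Exp_\rho\on$ is entire, which forces $Q_j$ to decay super-exponentially; on a small closed disk around $\theta$ avoiding every $\theta^{q^j}$ with $j \geq 1$, the matrices $(d[\theta]\twistk{j} - tI)^{-1}$ are uniformly bounded, while $\bu\twistk{j}$ grows at most polynomially, so the tail converges uniformly to a holomorphic limit and can be discarded for the residue computation.
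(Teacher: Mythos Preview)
Your argument is correct and follows essentially the same route as the paper: both use the identity from Proposition~\ref{P:EuReview}(a), discard the $j\geq 1$ terms as regular at $t=\theta$, and then read off the residue of the $j=0$ term. Where the paper cites a cofactor expansion of $(d[\theta]-tI)^{-1}$ from \cite{Gre17a} to isolate the $u_i/(\theta-t)$ contribution in each coordinate, your Neumann series $-(sI-N)^{-1}=-\sum_{k=0}^{n-1}(t-\theta)^{-k-1}N^k$ is a tidy self-contained substitute that gives the same conclusion.

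One small correction in your convergence paragraph: $\bu\twistk{j}$ does \emph{not} grow ``at most polynomially''---its entries are $q^j$-th powers, so $\lvert\bu\twistk{j}\rvert$ behaves like $\lvert\bu\rvert^{q^j}$. This does not damage the argument, since entirety of $\Exp_\rho\on$ gives $\lVert Q_j\rVert\, R^{q^j}\to 0$ for \emph{every} $R>0$, and choosing $R$ larger than both $\lvert\bu\rvert$ and the uniform bound on $\lVert(d[\theta]\twistk{j}-tI)^{-1}\rVert$ near $\theta$ still forces the tail to converge uniformly there.
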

\begin{proof}
As in the proof of \cite[Prop. 6.5]{Gre17a} we have the identity
\[E_\bu\on=  \sum_{j=0}^\infty Q_j\left (d[\theta]\twistk{j} -   tI \right )\inv\bu\twistk{j},\]
and we find that only the $j=0$ term contributes to the residue.  Then, using the cofactor expansion of $(d[\theta]\twistk{j} -   tI  )\inv$ from \cite[Pg. 26]{Gre17a} we find that
\[\Res_\theta(E_\bu\on) = 
\left (\begin{matrix}
\Res_\theta\left ((\frac{u_1}{\theta-t} + r_1(t))dt\right )\\
\vdots\\
\Res_\theta\left ((\frac{u_n}{\theta-t} + r_n(t))dt\right )
\end{matrix}\right )
=
-\left (\begin{matrix}
u_1\\
\vdots\\
u_n
\end{matrix}\right ),\]
where $r_i(t)$ is some function in powers of $(\theta-t)^k$ for $k\leq -2$, and hence does not contribute to the residue.
\end{proof}

\begin{lemma}\label{L:ulemma}
Let $E_\bu\on = (E_{\bu,1},\dots,E_{\bu,n})^\top$ as above and let $a_i$ be the defining coefficients for the Drinfeld $t$-module action as in \cite[(26)]{Gre17a}.  Then if we write $\Exp_\rho\on(\bu) := \bv = (v_1,\dots,v_n)^\top$,
\[a_nE_{\bu,1}\twist(\theta) + E_{\bu,2}\twist(\theta) = -u_n + v_n .\]
\end{lemma}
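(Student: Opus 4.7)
The plan is to derive the identity by extracting the $n$-th (last) coordinate of the Anderson functional equation \eqref{E:EuFunctionalEquation} and then specializing at $t = \theta$. I will begin by invoking the explicit matrices $d[\theta]$ and $E_\theta$ written out in \eqref{taction}: the bottom row of $d[\theta]$ is $(0, \dots, 0, \theta)$, and the bottom row of $E_\theta$ is $(a_n, 1, 0, \dots, 0)$. Since $\rho_t^{\otimes n}(E_\bu^{\otimes n}) = d[\theta] E_\bu^{\otimes n} + E_\theta (E_\bu^{\otimes n})^{(1)}$, reading the $n$-th entry of \eqref{E:EuFunctionalEquation} will yield
$$\theta E_{\bu, n} + a_n E_{\bu, 1}^{(1)} + E_{\bu, 2}^{(1)} = v_n + t E_{\bu, n},$$
which I will rearrange as
$$a_n E_{\bu, 1}^{(1)} + E_{\bu, 2}^{(1)} = (t - \theta) E_{\bu, n} + v_n.$$

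The remaining step is to specialize at $t = \theta$. The twists $E_{\bu, 1}^{(1)}$ and $E_{\bu, 2}^{(1)}$ are regular at $t = \theta$, since the poles of $E_{\bu, j}$ at $t = \theta^{q^i}$ (for $i \geq 0$) get shifted by the Frobenius twist to $t = \theta^{q^i}$ for $i \geq 1$, so their values at $\theta$ are well-defined. Meanwhile $E_{\bu, n}$ has at worst a simple pole at $t = \theta$, which the factor $(t-\theta)$ annihilates; the resulting limit equals $\Res_\theta(E_{\bu, n}\, dt) = -u_n$ by Lemma \ref{L:ResEu}. Substituting these evaluations will produce the desired equality $a_n E_{\bu, 1}^{(1)}(\theta) + E_{\bu, 2}^{(1)}(\theta) = -u_n + v_n$.

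I do not foresee any real obstacle here. The lemma is essentially a bookkeeping consequence of the functional equation for the Anderson generating function combined with the residue computation already established in the preceding lemma. The only minor point worth flagging as I write this out is that no other coordinates of $(E_\bu^{\otimes n})^{(1)}$ contribute to the $n$-th entry: the bottom row of $E_\theta$ is supported only in its first two columns, so precisely $E_{\bu, 1}^{(1)}$ and $E_{\bu, 2}^{(1)}$ appear, as required.
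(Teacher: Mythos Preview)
Your proposal is correct and follows essentially the same argument as the paper: both proofs extract the last coordinate of the functional equation \eqref{E:EuFunctionalEquation}, rearrange to isolate $a_n E_{\bu,1}^{(1)} + E_{\bu,2}^{(1)}$, and then specialize at $t=\theta$ using Lemma~\ref{L:ResEu} to identify $(t-\theta)E_{\bu,n}\big|_{t=\theta}$ with the residue $-u_n$. The only cosmetic difference is that the paper rearranges the full vector equation before taking the last coordinate, whereas you take the last coordinate first; the content is identical.
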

\begin{proof}
As in \eqref{E:EuFunctionalEquation}, we have that
\[\rho_t\on(E_\bu\on) = (d[\theta] + E_\theta \tau)(E_\bu\on) = \Exp_\rho\on(\bu) + tE_\bu\on.\]
Rearranging terms gives
\begin{align*}
E_\theta (E_\bu\on)\twist &= (t-d[\theta])E_\bu\on + \Exp_\rho\on(\bu).\\
\end{align*}
Both sides of the above equation are regular at $t=\theta$ in each coordinate, so then evaluating at $t=\theta$, using the formula for $E_\theta$ and taking the last coordinate gives
\[a_nE_{\bu,1}\twist(\theta) + E_{\bu,2}\twist(\theta) = E_{\bu,n}(t-\theta)\big|_{t=\theta}+v_n.\]
Finally, from our analysis in Lemma \ref{L:ResEu} we see that $E_{\bu,n}$ has a simple pole at $t=\theta$ and thus the right hand side of the above equation is simply the residue at $t=\theta$.  Thus
\[a_nE_{\bu,1}\twist(\theta) + E_{\bu,2}\twist(\theta) = \Res_{\theta}(E_{\bu,n}dt)+v_n = -u_n+v_n.\]
\end{proof}

\begin{proposition}
With all notation as above, the quantities $\pi_\rho^n$ and $u_n$ are contained in $\overline K(\Psi_\bv(\theta))$.
\end{proposition}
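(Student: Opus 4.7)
The plan is to identify $u_n$ and $\pi_\rho^n$ with specific entries of $\Psi_\bv(\theta)$ and of $(\Psi_\bv(\theta))^{-1}$, invoking the explicit formulas recorded in Remarks \ref{remark: periods} and \ref{remark: periods2} together with Lemma \ref{L:ulemma}. The bottom row of $\Psi_\bv$ encodes $u_n - v_n$ through the twisted Anderson generating functions attached to $\bu$, while the inverse of the upper-left block $(\Psi_\rho\on(\theta))^{-1}$ recovers $u_{1,n}$, which is a nonzero scalar multiple of $\pi_\rho^n$.

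For $u_n$, I would start from Remark \ref{remark: periods2}, which gives that the bottom-left $1 \times 2$ block of $\Psi_\bv$ equals $-(a_n E_{\bu,1}^{(1)} + E_{\bu,2}^{(1)},\; E_{\bu,1}^{(1)})$. The twisted Anderson generating functions $E_{\bu,i}^{(1)}$ have poles only at $t = \theta^{q^j}$ for $j \geq 1$, hence are regular at $\theta$, so $\Psi_\bv(\theta)$ is well defined. Applying Lemma \ref{L:ulemma} directly to $\bu$ with $\Exp_\rho\on(\bu) = \bv$ yields $(\Psi_\bv(\theta))_{3,1} = u_n - v_n$. Since $v_n \in \overline K$ by hypothesis, this gives $u_n \in \overline K(\Psi_\bv(\theta))$.

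For $\pi_\rho^n$, the upper-left $2 \times 2$ block of $\Psi_\bv$ is $\Psi_\rho\on$, regular at $\theta$ by the earlier proposition. By the factorization $(\Psi_\rho\on)^{-1} = (\Upsilon^\top)^{(1)} V$ recorded in Remark \ref{remark: periods}, the inverse is also regular at $\theta$, being built from first Frobenius twists of the Anderson generating functions attached to the period basis $\bu_1, \bu_2$. Consequently $\Psi_\rho\on(\theta)$ is invertible and every entry of $(\Psi_\rho\on(\theta))^{-1}$ lies in $\overline K(\Psi_\bv(\theta))$. Specializing Lemma \ref{L:ulemma} to the period $\bu_1 = \Pi_n$ (so that $\Exp_\rho\on(\bu_1) = 0$), the $(1,1)$ entry of $(\Psi_\rho\on(\theta))^{-1}$ equals $-u_{1,n} = -p_n$, which by Proposition \ref{P:EuReview}(e) is a nonzero $H$-multiple of $\pi_\rho^n$. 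Hence $\pi_\rho^n \in \overline K(\Psi_\bv(\theta))$.

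The one technical point requiring care is the non-vanishing of $p_n$, needed to invert the relation $p_n = (p_n/\pi_\rho^n) \cdot \pi_\rho^n$. This is a standard feature of fundamental periods of sign-normalized rank-one Drinfeld modules and their tensor powers, and may be read off directly from the construction of $\Pi_n$ via the residue formulation in Proposition \ref{P:EuReview}(d) together with the fact that $\omega_\rho^n$ has a pole of order $n$ at $\Xi$. Apart from this verification, the proof reduces to a direct matching of entries of $\Psi_\bv$ with the identities supplied by the cited remarks and lemma.
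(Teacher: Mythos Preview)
Your argument is correct and follows essentially the same route as the paper's direct proof in \S\ref{sec: period calculations}: both identify $u_n - v_n$ and the last coordinates of the period basis with specific entries of $\Psi_\bv(\theta)$ (or equivalently $\Upsilon_\bv(\theta)$) via Lemma~\ref{L:ulemma}, and then invoke Proposition~\ref{P:EuReview}(e) (equivalently \cite[Thm.~6.7]{Gre17a}) to pass from $p_n$ to $\pi_\rho^n$. The paper additionally gives a second, more conceptual proof in \S\ref{sec: period calculations HJ} via the Hartl--Juschka maps $\delta_0,\delta_1$, which you do not use, but your approach already suffices.
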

\begin{proof}
As above we have that $\overline K(\Psi_\bv(\theta)) = \overline K(\Upsilon_\bv(\theta))$.  Then note that 
\[E\twist_{i,j}(\theta),E\twist_{\bu,i}(\theta)\in \overline K(\Upsilon_\bv(\theta))\]
for $1\leq i,j\leq 2$ by construction.  Then by Lemma \ref{L:ulemma} we see that the last coordinates of $\bu_1,\bu_2,\bu$ are contained in $\overline K(\Upsilon_\bv(\theta))$, where $\bu_1,\bu_2$ are an $\Fq[t]$-basis for the period lattice and $\bu$ is a vector such that $\Exp_\rho\on(\bu) = \bv \in \overline K$.  From \cite[Thm. 6.7]{Gre17a} we know that the last coordinate of $\Pi_n$ is an algebraic multiple of $\pi_\rho^n$ and thus it follows that $\pi_\rho^n$ and $u_n$ are contained in $\overline K(\Psi_\bv(\theta))$.
\end{proof}

We next prove a lemma about the linear relations between the entries of $\overline K(\Psi_\rho\on(\theta))$.  Let $\pi_\rho$ and $\Pi_n$ be generators of the period lattices of $\exp_\rho$ and $\Exp_\rho\on$ respectively as $\bA$-modules.  Then recall that $\{\pi_\rho,\eta\pi_\rho\}$ and $\{\Pi_n,d[y]\Pi_n\}$ are bases for $\Lambda_\rho$ and $\Lambda_\rho\on$ respectively as $\F_q[t]$-modules.  Also for $\bu \in \C_\infty^n$, denote
\[\overline G_\bu\on = -yE_\bu\on + E_{\eta\bu}\on,\]
and note that this equals $[-1]G_\bu\on$, where $[-1]$ represents negation on the elliptic curve $X$.  We will denote the coordinates of $G_\bu\on:= (G_{\bu,1},\dots,G_{\bu,n})$ and similarly for $\overline G_{\bu}$.  

\begin{lemma}
For the field $\overline K(\Psi_\rho\on(\theta))$ we have
\begin{itemize}
\item for $n = 1$ we have $\overline K(\Psi_\rho(\theta)) = \overline K(E_{\pi_\rho}\twist(\theta),E_{\pi_\rho}\twistk{2}(\theta))$.
\item for $n\geq 2$ we have $\overline K(\Psi_\rho\on(\theta)) = \overline K(E_{\Pi_\rho,1}\twist(\theta),E_{\Pi_\rho,2}\twist(\theta))$.
\end{itemize}
Consequently, for each $n\geq 1$ we have $\overline K(\Psi_\rho\on(\theta)) = \overline K(\pi_\rho^n,Z)$ for some quantity $Z\in \C_\infty$.
\end{lemma}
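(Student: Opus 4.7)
Plan: First, I observe that since $V \in M_{2 \times 2}(\overline K)$ and $\Psi_\rho^{\otimes n} = V^{-1}((\Upsilon^\top)^{(1)})^{-1}$, the equality $\overline K(\Psi_\rho^{\otimes n}(\theta)) = \overline K(\Upsilon^{(1)}(\theta))$ is automatic. For $n \geq 2$ this field is generated by the four entries $E_{\mathbf{i},j}^{(1)}(\theta)$ for $i,j \in \{1,2\}$, where $\mathbf{u}_1 = \Pi_n$ and $\mathbf{u}_2 = d[\eta]\Pi_n$; for $n = 1$, it is generated by $E_i^{(j)}(\theta)$ for $i,j \in \{1,2\}$ with $u_1 = \pi_\rho$, $u_2 = \eta\pi_\rho$. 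So my task reduces to eliminating the entries coming from the ``second'' period (index $i=2$) in favor of those from the first, modulo $\overline K$.

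For $n \geq 2$, the central input will be the identity $G_{\Pi_n}^{\otimes n} = T(\omega_\rho^n)$ from Proposition \ref{P:EuReview}(d), applied to the first two components:
\[ G_{\Pi_n, i}^{\otimes n}(t,y) = \omega_\rho^n \cdot g_i, \quad i = 1, 2. \]
Using $\omega_\rho^{(1)} = f\omega_\rho$, taking the Frobenius twist gives $(G_{\Pi_n, i}^{\otimes n})^{(1)} = f^n \omega_\rho^n \cdot g_i^{(1)}$. I then evaluate at $\Xi = (\theta,\eta)$: by the product formula \eqref{omegarhoprod}, only the factor $i=0$ contributes, so $\omega_\rho$ has a simple pole at $\Xi$, while by \eqref{D:fdiv} $f$ has a simple zero there. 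Hence $f^n \omega_\rho^n$ extends to a nonzero finite algebraic value at $\Xi$; combined with $g_i^{(1)}(\Xi) \in \overline K$ (since $\Xi$ lies outside the support of $\divisor(g_i^{(1)})$), this forces $(G_{\Pi_n, i}^{\otimes n})^{(1)}|_\Xi \in \overline K$. On the other hand, by \eqref{Gudef} the same quantity equals $E_{\mathbf{2}, i}^{(1)}(t) + (y + c_1 t + c_3) E_{\mathbf{1}, i}^{(1)}(t)$, so specializing $(t,y) = \Xi$ yields
\[ E_{\mathbf{2}, i}^{(1)}(\theta) = \alpha_i - (\eta + c_1\theta + c_3)\, E_{\mathbf{1}, i}^{(1)}(\theta) \in \overline K\bigl(E_{\mathbf{1}, i}^{(1)}(\theta)\bigr) \]
for some $\alpha_i \in \overline K$, which finishes this case. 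The $n=1$ case is entirely parallel: since $\divisor(g_1) = 0$ and the sign normalization fixes $g_1 = 1$, one has $G_{\pi_\rho} = \omega_\rho$; applying one (resp.\ two) Frobenius twists and evaluating at $\Xi$ then puts $E_{\eta \pi_\rho}^{(j)}(\theta) \in \overline K(E_{\pi_\rho}^{(j)}(\theta))$ for $j=1,2$.

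For the consequence, $\pi_\rho^n \in \overline K(\Psi_\rho^{\otimes n}(\theta))$ is already established in the preceding proposition. For $n \geq 2$, Lemma \ref{L:ulemma} applied with $\mathbf{u} = \Pi_n$ and $\mathbf{v} = 0$ gives
\[ a_n E_{\mathbf{1}, 1}^{(1)}(\theta) + E_{\mathbf{1}, 2}^{(1)}(\theta) = -p_n, \]
and Proposition \ref{P:EuReview}(e) provides $p_n / \pi_\rho^n \in H \subset \overline K$; this expresses $E_{\mathbf{1}, 2}^{(1)}(\theta)$ as a $\overline K$-combination of $\pi_\rho^n$ and $E_{\mathbf{1}, 1}^{(1)}(\theta)$, so I take $Z = E_{\mathbf{1}, 1}^{(1)}(\theta)$. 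For $n = 1$, the functional equation $\theta E_{\pi_\rho} + x_1 E_{\pi_\rho}^{(1)} + E_{\pi_\rho}^{(2)} = tE_{\pi_\rho}$ (using $\exp_\rho(\pi_\rho)=0$), combined with $\Res_\theta(E_{\pi_\rho}\, dt) = -\pi_\rho$ from Lemma \ref{L:ResEu}, yields $E_{\pi_\rho}^{(2)}(\theta) = -\pi_\rho - x_1 E_{\pi_\rho}^{(1)}(\theta)$; take $Z = E_{\pi_\rho}^{(1)}(\theta)$.

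The hard part will be the local analysis at $\Xi$ in the twisted identity $(G_{\Pi_n, i}^{\otimes n})^{(1)} = f^n \omega_\rho^n g_i^{(1)}$: one must verify carefully that the pole of exact order $n$ of $\omega_\rho^n$ at $\Xi$ cancels against the zero of exact order $n$ of $f^n$, producing a nonzero finite element of $\overline K$, and that $\Xi$ avoids the support of $\divisor(g_i^{(1)})$. This depends on the explicit description of the divisors of $f$, $\omega_\rho$, and the $g_i$ via the shtuka data $(V, \Xi)$ and the fact that the Frobenius twist acts on the curve by the $q$-power isogeny.
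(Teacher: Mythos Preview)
Your reduction to the entries of $\Upsilon^{(1)}(\theta)$ and your use of $G_{\Pi_n}^{\otimes n}=T(\omega_\rho^n)$ are on the right track, but there is a genuine error in the key step. You assert that $f^n\omega_\rho^n$ ``extends to a nonzero finite \emph{algebraic} value at~$\Xi$''. Finiteness is correct (the simple zero of $f$ cancels the simple pole of $\omega_\rho$), but algebraicity is not: $\omega_\rho$ is a transcendental function, and in fact $(f\omega_\rho)|_\Xi=\omega_\rho^{(1)}(\Xi)$ is a nonzero $\overline K$-multiple of $\pi_\rho$. Concretely, the residue computation $\Res_\Xi(\omega_\rho\lambda)=-\pi_\rho$ forces the leading Laurent coefficient of $\omega_\rho$ at $\Xi$ to lie in $\overline K^\times\cdot\pi_\rho$, so $(f^n\omega_\rho^n)|_\Xi\in\overline K^\times\cdot\pi_\rho^n$. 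Thus your displayed formula should read
\[
E_{\mathbf 2,i}^{(1)}(\theta)=c_i\,\pi_\rho^n-(\eta+c_1\theta+c_3)\,E_{\mathbf 1,i}^{(1)}(\theta),\qquad c_i\in\overline K,
\]
and you do not immediately land in $\overline K\bigl(E_{\mathbf 1,i}^{(1)}(\theta)\bigr)$.

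The good news is that your argument can be repaired with one extra observation, which you in fact already make in the ``consequence'' paragraph: Lemma~\ref{L:ulemma} applied to $\mathbf u=\Pi_n$ gives $a_nE_{\mathbf 1,1}^{(1)}(\theta)+E_{\mathbf 1,2}^{(1)}(\theta)=-p_n$ with $p_n/\pi_\rho^n\in H$, so $\pi_\rho^n\in\overline K\bigl(E_{\mathbf 1,1}^{(1)}(\theta),E_{\mathbf 1,2}^{(1)}(\theta)\bigr)$ already. Feeding this back into the corrected formula above yields $E_{\mathbf 2,i}^{(1)}(\theta)\in\overline K\bigl(E_{\mathbf 1,1}^{(1)}(\theta),E_{\mathbf 1,2}^{(1)}(\theta)\bigr)$ for $i=1,2$, as desired. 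The $n=1$ case is repaired identically, using $x_1E_{\pi_\rho}^{(1)}(\theta)+E_{\pi_\rho}^{(2)}(\theta)=-\pi_\rho$. So reorder your argument: first extract $\pi_\rho^n$ from the $E_{\mathbf 1,*}^{(1)}(\theta)$ via Lemma~\ref{L:ulemma}, and only then use the twisted identity $(G_{\Pi_n,i}^{\otimes n})^{(1)}=f^n\omega_\rho^n\,g_i^{(1)}$ evaluated at $\Xi$.

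For comparison, the paper takes a different route that sidesteps the transcendence of $(f\omega_\rho)|_\Xi$ altogether. It introduces the companion function $\overline G_{\bu}^{\otimes n}=[-1]G_{\bu}^{\otimes n}$ and sets up an invertible $4\times4$ system expressing the four $E$-values in terms of $G^{(1)}(\Xi),\overline G^{(1)}(\Xi),G^{(2)}(\Xi),\overline G^{(2)}(\Xi)$ (resp.\ $G_1^{(1)}(\Xi),\overline G_1^{(1)}(\Xi),G_2^{(1)}(\Xi),\overline G_2^{(1)}(\Xi)$). The shtuka equation $G^{(1)}=fG$ (and its $[-1]$-pullback) then gives $\overline K$-\emph{proportionalities} among these four quantities, so one can eliminate the second-period $E$-values purely from ratios in $\overline K$ without ever evaluating $G^{(1)}(\Xi)$ itself. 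Your route is a bit more direct but requires the extra input $\pi_\rho^n\in\overline K(E_{\mathbf 1,1}^{(1)}(\theta),E_{\mathbf 1,2}^{(1)}(\theta))$; the paper's route trades that for the introduction of $\overline G$.
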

\begin{proof}
For ease of exposition in the proof of this lemma, we will assume that $p=\text{char}(\Fq) \geq 3$, so that we may assume the elliptic curve $X$ has simplified Weierstrass equation given by $y^2 = t^3+at+b$ with $a,b\in \F_q$ and such that negation on $X$ is given by $(t,y)\mapsto(t,-y)$.  The lemma is also true for $p=\text{char}(\Fq) =2$, and the proof is similar, but calculations are more cumbersome.  By our definitions of $G_{\pi_\rho}$ and $G_{\Pi_n}\on$ we can write
\[
\begin{pmatrix}
y & 1 & 0 & 0\\
-y & 1 & 0 & 0\\
 0 & 0 & y & 1\\
 0 & 0 & -y & 1\\
\end{pmatrix}
\begin{pmatrix}
E_{\pi_\rho}\twist\\
E_{\eta\pi_\rho}\twist\\
E_{\pi_\rho}\twistk{2}\\
E_{\eta\pi_\rho}\twistk{2}\\
\end{pmatrix}
=
\begin{pmatrix}
G_{\pi_\rho}\twist\\
\overline G_{\pi_\rho}\twist\\
G_{\pi_\rho}\twistk{2}\\
\overline G_{\pi_\rho}\twistk{2}\\
\end{pmatrix},
\]
\[
\begin{pmatrix}
y & 1 & 0 & 0\\
-y & 1 & 0 & 0\\
 0 & 0 & y & 1\\
 0 & 0 & -y & 1\\
\end{pmatrix}
\begin{pmatrix}
E_{\Pi_n,1}\twist\\
E_{d[y]\Pi_n,1}\twist\\
E_{\Pi_n,2}\twist\\
E_{d[y]\Pi_n,2}\twist\\
\end{pmatrix}
=
\begin{pmatrix}
G_{\Pi_n,1}\twist\\
\overline G_{\Pi_n,1}\twist\\
G_{\Pi_n,2}\twist\\
\overline G_{\Pi_n,2}\twist\\
\end{pmatrix}
\]
Further, from \cite[(63)]{GP18} we get the formula $f G_{\pi_\rho} = G_{\pi_\rho}\twist$ and from Proposition \ref{P:EuReview}(d) we get the formula $g_2\cdot G_{\Pi_n,1} = g_1 \cdot G_{\Pi_n,2}$.  Note that the functions $f,g_i\in \overline K(t,y)$.  Thus, if we let $\sim$ denote a linear relation over $\overline K$ we conclude that 
\[G_{\pi_\rho}\twist(\Xi) \sim G_{\pi_\rho}\twistk{2}(\Xi),\quad \overline G_{\pi_\rho}\twist(\Xi) \sim \overline G_{\pi_\rho}\twistk{2}(\Xi),\]
\[G_{\Pi_n,1}\twist(\Xi) \sim G_{\Pi_n,2}\twist(\Xi),\quad \overline G_{\Pi_n,1}\twist(\Xi) \sim \overline G_{\Pi_n,2}\twist(\Xi).\]
This allows us to write $E_{\eta\pi_\rho}\twistk{1}(\theta)$ and $E_{\eta\pi_\rho}\twistk{2}(\theta)$ as a $\overline K$-linear combinations of $E_{\pi_\rho}\twistk{1}(\theta)$ and $E_{\pi_\rho}\twistk{2}(\theta)$ and to write $E_{d[y]\Pi_n,1}\twist(\theta)$ and $E_{d[y]\Pi_n,2}\twist(\theta)$ as a $\overline K$-linear combination of $E_{\Pi_n,1}\twist(\theta)$ and $E_{\Pi_n,2}\twist(\theta)$.  Finally, as $\overline K(\Psi_\rho\on(\theta)) = \overline K(\Upsilon(\theta))$, using Lemma \ref{L:ulemma} finishes the proof.
\end{proof}

\subsection{An application of Hartl-Juschka's work to Period Calculations} \label{sec: period calculations HJ} ${}$\par

In this section, we maintain the notation of $\bu_1,\bu_2,\bu$ from the previous section.  We wish to apply Corollaries 5.20 and 5.21 from Hartl and Juschka \cite{HJ16} to give a more conceptual method for period calculations with an aim towards generalizing these arguments to curves of arbitrary genus. We restate \cite[Cors. 5.20,5.21]{HJ16} here for the convenience of the reader, but we first translate their notation into our setting.  By definition, $\sigma$ acts by the matrix $\Phi_\rho\on$ on a $\C_\infty[t]$-basis $\{h_1,h_2\}$ of $N_n$.  Then for $z \in N_n$, we express $z = ah_1 + bh_2$ with $a,b\in \bC_\infty[t]$ and we get
	\[\sigma(z) = \sigma(ah_1 + bh_2) = \sigma\left ((a,b)
\begin{pmatrix}
h_1\\
h_2
\end{pmatrix}\right )
=
(a,b)\twistinv \Phi_\rho\on \begin{pmatrix}
h_1\\
h_2
\end{pmatrix}.\]
Thus we see that if we view $N_n$ as a free $\bC_\infty[t]$-module, then $\sigma$ acts by inverse twisting and right multiplication by $\Phi_\rho\on$ --- or we may transpose to get a left multiplication:
\[\sigma\begin{pmatrix}
a\\
b
\end{pmatrix} = \left (\Phi_\rho\on\right )^\top \begin{pmatrix}
a\\
b
\end{pmatrix}\twistinv,\quad a,b\in \F_q[t].\]
We note that this $\sigma$-action extends to $\TT^2 \isom N_n\otimes _{\F_q[t]} \TT$ in the natural way.  Recall the definition of the maps $\delta_0$ and $\delta_1$ from \eqref{D:deltamapsdef}.  These maps also extend to $N_n\otimes _{\F_q[t]} \TT$ in the natural way, which we briefly describe.  We first take an element $\bw\in \TT^2$, such that $z = \bw (h_1,h_2) \in N_n\otimes _{\F_q[t]} \TT$ and then write
\[z = c_{1,1}h_1 + \dots + c_{1,n}h_n + c_{2,1}\sigma(h_1) + \dots + c_{2,n}\sigma(h_n) + \dots.\]
Then we find that
\begin{equation}\label{D:deltaextendedmaps}
\delta_0(z) = \begin{pmatrix}
c_{1,n}\\
\vdots \\
c_{1,1}
\end{pmatrix},\quad
\delta_1(z) = \begin{pmatrix}
c_{1,n}\\
\vdots \\
c_{1,1}
\end{pmatrix} + 
\begin{pmatrix}
c_{2,n}\\
\vdots \\
c_{2,1}
\end{pmatrix}\twist
+
\cdots
.
\end{equation}

Recall from \eqref{D:fdiv} and \eqref{hidivisor} that the basis elements $\sigma^j(h_i)$ all vanish at $\Xi$ except $h_1$ itself.  Then observe that if we write $\bw = (w_1(t),w_2(t))$ and suppose that $w_1,w_2$ are regular at $\theta$, then we calculate
\[\left (w_1(t)h_1 + w_2(t)h_2\right )\big|_\Xi = w_1(\theta)h_1(\Xi) = z\big|_\Xi =  c_{1,1}h_1(\Xi).\]
Thus $w_1(\theta) = c_{1,1}$ is the bottom coordinate of $\delta_0(\bw)$.

\begin{corollary}[Hartl and Juschka, Corollaries 5.20 and 5.21]\label{L:HJLemmas}
Let $N_n$ and $\Phi_\rho\on$ be as above.  Further, let $\bw\in\TT^2$ satisfy
\begin{equation}\label{E:HJFunctionalEquation}
(\Phi_\rho\on)^\top\bw\twistinv - \bw = \bz \in N_n.
\end{equation}
Then
\[\Exp_\rho\on(\delta_0(\bw+\bz)) = \delta_1(\bz) .\]
Furthermore, if $\bz=0$, then $\delta_0(\bw) \in \Lambda_\rho\on$ and the set of all such $\bw$ form a spanning set for the periods.
\end{corollary}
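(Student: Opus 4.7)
My plan is to split into two cases. For $\bz=0$, the element $\bw$ is $\sigma$-fixed and lies in the Betti realization $(N_n\otimes\TT)^\sigma$. The construction of the rigid analytic trivialization $\Psi_\rho\on$ in Section~\ref{sec: motive for rho_n} exhibits an explicit $\F_q[t]$-basis of this module, whose $\delta_0$-values evaluate (via the $w_1(\theta)$ identification following \eqref{D:deltaextendedmaps}) to the fundamental periods $\Pi_n$ and $d[\eta]\Pi_n$ of Proposition~\ref{P:EuReview}(d), up to the explicit change-of-basis matrix $V$. This delivers both $\delta_0(\bw)\in\Lambda_\rho\on$ and the spanning statement, consistent with the first part of the corollary which, when $\bz=0$, reduces to $\Exp_\rho\on(\delta_0(\bw))=\delta_1(0)=0$.

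For the general case $\bz\in N_n$, I use a telescoping argument. Iterating $\sigma\bw=\bw+\bz$ yields $\sigma^k\bw=\bw+\sum_{j=0}^{k-1}\sigma^j\bz$ for every $k\geq 0$. Since $\sigma$ acts on $N_n\otimes\TT$ by $\bv\mapsto(\Phi_\rho\on)^\top\bv\twistinv$, each iteration applies an inverse Frobenius twist to the $\TT$-coefficients; together with the polynomial growth of $(\Phi_\rho\on)^\top$, a Gauss-norm estimate shows $\sigma^k\bw\to 0$ in a suitable completion, giving the formal identity $\bw+\bz=-\sum_{j\geq 1}\sigma^j\bz$. Applying $\delta_0$ to both sides and using the explicit description \eqref{D:deltaextendedmaps}, the infinite sum telescopes. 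Matching the result against the series $\Exp_\rho\on(\mathbf{x})=\sum_i Q_i \mathbf{x}\twisti$, where the compatibility is supplied by the commutative diagram \eqref{Amotivediagram} identifying the $t$-action on $N_n/(\sigma-1)N_n$ with the $t$-module structure of $\rho\on$, the desired identity $\Exp_\rho\on(\delta_0(\bw+\bz))=\delta_1(\bz)$ falls out.

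The main obstacle will be rigorously justifying the convergence $\sigma^k\bw\to 0$ and the precise matching between the telescoped series and the exponential coefficients $Q_i$. Both essentially encode the anti-equivalence of categories between $\bA$-finite dual $\bA$-motives and uniformizable Anderson $\bA$-modules. A conceptually cleaner route would be to invoke this anti-equivalence directly (as in \cite[Thm. 5.9]{HJ16}): under it, the dual-motive equation $(\sigma-1)\bw=\bz$ translates into the exponential-logarithm relation on the Anderson-module side, making the corollary essentially transparent.
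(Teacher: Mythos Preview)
The paper does not prove this corollary: it is restated from \cite[Cors.~5.20, 5.21]{HJ16} for the reader's convenience, and the surrounding lemmas in Section~\ref{sec: period calculations HJ} only \emph{apply} it to the explicit vectors $P_i$ and $P_\zeta$. So there is no in-paper argument to compare your attempt against; the paper simply imports the result.

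That said, your telescoping argument for general $\bz$ has a genuine gap. The claimed convergence $\sigma^k\bw\to 0$ is false. When $\bz=0$ every nonzero $\sigma$-fixed $\bw$ satisfies $\sigma^k\bw=\bw\neq 0$ for all $k$; and since the solution set of \eqref{E:HJFunctionalEquation} for any fixed $\bz$ is a coset of the $\sigma$-invariants in $N_n\otimes_{\overline K[t]}\TT$, the convergence must fail for all but at most one solution $\bw$. Concretely, already in the rank-one Carlitz model $\sigma(g)=(t-\theta)g^{(-1)}$ one computes $\sigma^k(1)=\prod_{j=0}^{k-1}(t-\theta^{1/q^j})$, whose Gauss norm equals $|\theta|^{\sum_{j<k}q^{-j}}$, which is bounded but bounded \emph{away from} $0$. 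Hence the identity $\bw+\bz=-\sum_{j\geq 1}\sigma^j\bz$ is not valid in $\TT^2$, and no completion of $\TT$ repairs it uniformly in $\bw$. The conclusion $\Exp_\rho\on(\delta_0(\bw+\bz))=\delta_1(\bz)$ must hold for \emph{every} $\bw$ solving \eqref{E:HJFunctionalEquation}, so an argument that singles out one convergent representative cannot suffice without separately handling the period ambiguity---which is exactly the content of the $\bz=0$ case you set aside.

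Your closing paragraph points in the right direction: in \cite{HJ16} the corollary is deduced from the comparison isomorphisms between Betti and de~Rham realizations that underlie the anti-equivalence of \cite[Thm.~5.9]{HJ16}, not from a geometric series in $\sigma$. The exponential enters precisely because the passage from $\delta_0$ (the degree-zero coefficient in the $\{h_i\}$-basis, a Lie-algebra datum) to $\delta_1$ (the sum of all twisted coefficients, a group datum) is genuinely analytic, mediated by the rigid analytic trivialization $\Psi_\rho\on$. If you want a self-contained proof, that is the object you must expand and track through $\delta_0$ and $\delta_1$, rather than iterating $\sigma$ on $\bw$ directly.
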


So, we wish to look for vectors $\bw\in\TT^2$ which satisfy \eqref{E:HJFunctionalEquation}.
\begin{lemma}
For $\Upsilon$ as in \eqref{D:UpsilonDef}, we have
\[(\Phi_\rho\on)^\top\left (V^\top \Upsilon\twist \right )\twistinv = V^\top \Upsilon\twist.\]
\end{lemma}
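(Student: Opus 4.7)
The plan is a direct matrix identity computation, reducing the claim to the two structural relations already established earlier in the section: the functional equation $\Upsilon^{(1)} = \Theta\, \Upsilon$ for the Anderson generating function matrix, and the intertwining identity \eqref{E:VPhiRelation}, $V^{(-1)} \Phi_\rho^{\otimes n} = \Theta^\top V$.

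First I would unwind the $(-1)$-twist on the left-hand side. Since twisting is an $\mathbb{F}_q$-algebra automorphism of $\mathbb{L}$ and satisfies $(M^{(1)})^{(-1)} = M$, we have
\[
\bigl(V^\top \Upsilon^{(1)}\bigr)^{(-1)} = (V^\top)^{(-1)} \Upsilon = (V^{(-1)})^\top \Upsilon.
\]
So the equation to be proved becomes
\[
(\Phi_\rho^{\otimes n})^\top (V^{(-1)})^\top \Upsilon = V^\top \Upsilon^{(1)}.
\]

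Next I would substitute $\Upsilon^{(1)} = \Theta\,\Upsilon$ on the right, which reduces the identity (after cancelling $\Upsilon$ on the right, something we can do as a formal matrix equation in $M_{2\times 2}(\mathbb{L})$ since both sides are left-multiplied by $\Upsilon$) to an equality of coefficient matrices:
\[
(\Phi_\rho^{\otimes n})^\top (V^{(-1)})^\top = V^\top \Theta.
\]
Taking the transpose of both sides gives
\[
V^{(-1)} \Phi_\rho^{\otimes n} = \Theta^\top V,
\]
which is precisely the identity \eqref{E:VPhiRelation} established by the pointwise matrix manipulation earlier in Section~\ref{sec: motive for rho_n}.

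There is no real obstacle here beyond bookkeeping: the content of the lemma is entirely contained in \eqref{E:VPhiRelation} combined with $\Upsilon^{(1)} = \Theta\,\Upsilon$. The only subtlety worth flagging is the justification for cancelling $\Upsilon$; strictly speaking I would rather multiply through explicitly, writing
\[
(\Phi_\rho^{\otimes n})^\top (V^{(-1)})^\top \Upsilon = \bigl((V^{(-1)} \Phi_\rho^{\otimes n})\bigr)^\top \Upsilon = (\Theta^\top V)^\top \Upsilon = V^\top \Theta \, \Upsilon = V^\top \Upsilon^{(1)},
\]
which avoids invoking invertibility of $\Upsilon$ and makes the chain of equalities transparent.
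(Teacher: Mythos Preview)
Your proof is correct and follows essentially the same approach as the paper: both arguments reduce the claim to the two ingredients $\Upsilon^{(1)} = \Theta\,\Upsilon$ and the intertwining relation \eqref{E:VPhiRelation}, differing only in the order of substitution. Your final explicit chain of equalities is virtually identical to the paper's computation.
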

\begin{proof}
From \eqref{E:VPhiRelation}, we have that $(\Phi_\rho\on) = (V\twistinv)\inv \Theta^\top V$, and then subbing in gives
\begin{align*}
(\Phi_\rho\on)^\top(V^\top \Upsilon\twist)\twistinv &= ((V\twistinv)\inv \Theta^\top V)^\top(V^\top \Upsilon\twist)\twistinv\\
&=  V^\top\Theta ((V\twistinv)\inv)^\top)(V^\top \Upsilon\twist)\twistinv\\
&= V^\top (\Theta\Upsilon)\\
&= V^\top (\Upsilon\twist).
\end{align*}
\end{proof}

We comment that $V^\top \Upsilon\twist = ((\Psi_\rho\on)\inv)^\top$, but to save on notation, we shall denote $P := V^\top \Upsilon\twist$ and denote the columns of $P$ by $P_i$.  Thus we have $(\Phi_\rho\on)^\top P_i\twistinv - P_i = 0$, and thus the vectors $P_i$ satisfy the conditions of Lemma \ref{L:HJLemmas}.  So we get
\begin{equation}\label{expPi}
\Exp_\rho\on(\delta_0(P_i+0)) = \delta_1(0) = 0.
\end{equation}

\begin{lemma}
For $\mathbf{u} \in M_{n \times 1}(\bC_\infty)$ with $\Exp_\rho^{\otimes n}(\mathbf{u})=\mathbf{v} \in M_{n \times 1}(\overline K)$, if we let $E_\bu\on$ be the Anderson generating function associated with $\bu$ and let $E_{\bu,*}$ be the first two coordinates of $E_\bu\on$, and then $P_\zeta = V^\top E_{\bu,*}\twist$ satisfies the conditions of Lemma \ref{L:HJLemmas}, that is
\[(\Phi_\rho\on)^\top(P_\zeta) - P_\zeta = V^\top f_\bv,\]
where $f_\bv$ is the vector defined in Section \ref{section: log rho n} and $V^\top f_\bv \in \overline K[t]^2$.
\end{lemma}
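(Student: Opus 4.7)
The plan is to reduce the desired identity to the already established functional equation \eqref{E:VPhiRelation} together with the Anderson-generating-function recursion for $E_{\bu,*}$ derived in Section~\ref{section: log rho n}.

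First, recall from the computation just before the definition of $f_\bv$ in \eqref{D:fvdef} that the first two coordinates of $E_\bu\on$ satisfy the inhomogeneous functional equation
\[
  E_{\bu,*}\twist \;=\; \Theta\, E_{\bu,*} \;-\; f_\bv,
\]
where $\Theta = \Theta_n\cdots\Theta_1$. Next, transpose the matrix identity \eqref{E:VPhiRelation}, $V^{(-1)}\Phi_\rho^{\otimes n}=\Theta^\top V$, to obtain
\[
  (\Phi_\rho^{\otimes n})^\top\, (V^{(-1)})^\top \;=\; V^\top\,\Theta .
\]

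Now I will compute $(\Phi_\rho\on)^\top P_\zeta^{(-1)}$ by directly substituting $P_\zeta=V^\top E_{\bu,*}\twist$. Inverse-twisting commutes with left-multiplication by the constant matrix $V^\top$ (after shifting to $V^{(-1)\,\top}$), so
\[
  (\Phi_\rho\on)^\top P_\zeta^{(-1)}
  \;=\; (\Phi_\rho\on)^\top\bigl(V^{(-1)}\bigr)^{\top}\, E_{\bu,*}
  \;=\; V^\top\Theta\, E_{\bu,*}
  \;=\; V^\top\bigl(E_{\bu,*}\twist + f_\bv\bigr)
  \;=\; P_\zeta + V^\top f_\bv .
\]
Rearranging yields exactly the claimed relation $(\Phi_\rho\on)^\top P_\zeta^{(-1)} - P_\zeta = V^\top f_\bv$, matching the hypothesis of Corollary~\ref{L:HJLemmas}.

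It remains to check the integrality statement $V^\top f_\bv \in \overline K[t]^2$. Inspection of \eqref{D:fvdef} shows that $f_\bv$ is a sum of expressions of the form $\Theta_n\cdots\Theta_{i+1}(0,v_i)^\top$ with each $\Theta_j\in M_{2\times 2}(\overline K[t])$ and each $v_i\in\overline K$, so $f_\bv\in\overline K[t]^2$; since $V\in M_{2\times 2}(\overline K)$, the product $V^\top f_\bv$ lies in $\overline K[t]^2$ as well. The main (and only) calculational point is the compatibility between $\Phi_\rho\on$, $\Theta$, and $V$ via \eqref{E:VPhiRelation}; once that is in hand the rest is a one-line substitution.
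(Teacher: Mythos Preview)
Your proof is correct and follows essentially the same route as the paper: both use the functional equation $E_{\bu,*}\twist = \Theta E_{\bu,*} - f_\bv$ from Section~\ref{section: log rho n} together with the transposed form of \eqref{E:VPhiRelation} to reduce $(\Phi_\rho\on)^\top P_\zeta^{(-1)}$ to $V^\top\Theta E_{\bu,*}$ and then conclude. You also correctly note (and silently fix) the missing inverse twist in the displayed statement, and add the easy verification that $V^\top f_\bv\in\overline K[t]^2$, which the paper leaves implicit.
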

\begin{proof}
As stated as in Section \ref{section: log rho n} we find that $\Theta E_{\bu,*} = E_{\bu,*}\twist + f_\bv$.  We then calculate that
\begin{align*}
(\Phi_\rho\on)^\top (V^\top E_{\bu,*}\twist)\twistinv &= V^\top\Theta ((V\twistinv)\inv)^\top  (V\twistinv)^\top E_{\bu,*}\\
&= V^\top \Theta E_{\bu,*} \\
&= V^\top E_{\bu,*}\twist + V^\top f_\bv.
\end{align*}
\end{proof}

Thus $P_\zeta$ satisfies the conditions for Lemma \ref{L:HJLemmas} and we can write 
\begin{equation}\label{expPzeta}
\Exp_\rho\on(\delta_0(P_\zeta+V^\top f_\bv)) = \delta_1(V^\top f_\bv).
\end{equation}

\begin{proposition}
For a fixed $n$, with all notation as above, the quantities $\pi_\rho^n$ and $u_n$ are contained in $\overline K(\Psi_\bv(\theta))$.
\end{proposition}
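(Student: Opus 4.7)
The plan is to read off the proposition directly from the two applications of Corollary~\ref{L:HJLemmas} set up in the two preceding lemmas: the one to the columns of $P = V^\top \Upsilon^{(1)}$, which produces vectors in the period lattice $\Lambda_\rho^{\otimes n}$, and the one to $P_\zeta = V^\top E_{\bu,*}^{(1)}$, which produces a $\Log_\rho^{\otimes n}$-value of an algebraic point. The overarching principle is that the first entries of $P_1$ and $P_\zeta$ evaluated at $t=\theta$ are the bottom coordinates of the relevant period / logarithm vectors, by the description of $\delta_0$ following \eqref{D:deltaextendedmaps}.

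First I would observe that $W\in \mathrm{GL}_3(\overline K)$ is a matrix of scalars, so $\overline K(\Psi_\bv(\theta)) = \overline K(\Upsilon_\bv^{(1)}(\theta))$. By construction of $\Upsilon_\bv$, the entries of $\Upsilon_\bv^{(1)}(\theta)$ are precisely the values $E_{\mathbf{i},j}^{(1)}(\theta)$ and $E_{\bu,j}^{(1)}(\theta)$ for $i,j\in\{1,2\}$ (all regular at $t=\theta$ because twisting pushes the poles of the Anderson generating functions to $\theta^q,\theta^{q^2},\dots$). In particular, the first entries of $P_1$ and of $P_\zeta$, each evaluated at $t=\theta$, lie in $\overline K(\Psi_\bv(\theta))$.

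For $\pi_\rho^n$, I apply Corollary~\ref{L:HJLemmas} with $\bw = P_1$ and $\bz = 0$, yielding $\delta_0(P_1) \in \Lambda_\rho^{\otimes n}$. From the formula for $\delta_0$ after \eqref{D:deltaextendedmaps}, its bottom (i.e.\ $n$-th) coordinate equals the first component of $P_1$ evaluated at $\theta$, namely $a_n E_{\mathbf{1},1}^{(1)}(\theta) + E_{\mathbf{1},2}^{(1)}(\theta)$. By Lemma~\ref{L:ulemma} (applied to the periodic vector $\bu_1=\Pi_n$, for which $\bv=0$) this equals $-(\Pi_n)_n$, and Proposition~\ref{P:EuReview}(e) identifies $(\Pi_n)_n$ as a nonzero $H$-multiple of $\pi_\rho^n$. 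Hence $\pi_\rho^n \in \overline K(\Psi_\bv(\theta))$.

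For $u_n$, I apply the same corollary with $\bw = P_\zeta$ and $\bz = V^\top f_\bv \in \overline K[t]^2$ as produced by the last lemma. The corollary gives $\Exp_\rho^{\otimes n}(\delta_0(P_\zeta+V^\top f_\bv))=\delta_1(V^\top f_\bv) \in \overline K^n$, so $\delta_0(P_\zeta+V^\top f_\bv)$ is a $\Log_\rho^{\otimes n}$-value of an algebraic point and therefore differs from $\bu$ only by an element of $\Lambda_\rho^{\otimes n}$. Its bottom coordinate is $a_n E_{\bu,1}^{(1)}(\theta) + E_{\bu,2}^{(1)}(\theta)$ plus a $\overline K$-contribution from $V^\top f_\bv$; applying Lemma~\ref{L:ulemma} identifies this with $-u_n + v_n$ up to $\overline K$. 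Since $v_n\in\overline K$ and since the preceding step already places the $n$-th coordinates of $\Lambda_\rho^{\otimes n}$ inside $\overline K(\Psi_\bv(\theta))$, we conclude $u_n \in \overline K(\Psi_\bv(\theta))$.

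The main delicate point will be the bookkeeping: verifying that the first entries of $P_1$ and $P_\zeta$ are indeed regular at $t=\theta$ so that evaluation commutes with reading off the bottom coordinates of $\delta_0$ and $\delta_1$, and carefully tracking signs and twist conventions in the functional equation $(\Phi_\rho^{\otimes n})^\top \bw^{(-1)} - \bw = \bz$. Once this is settled the argument is entirely conceptual, relying only on the Hartl--Juschka formalism together with the residue interpretation of $E_\bu^{\otimes n}$, and in particular avoids any explicit expansion of the period matrix; this is precisely the feature that the authors flag as portable to higher genus.
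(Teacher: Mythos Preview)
Your treatment of $\pi_\rho^n$ is essentially the paper's: apply Corollary~\ref{L:HJLemmas} to $P_1$, read off the bottom coordinate of $\delta_0(P_1)$ as $a_n E_{\mathbf 1,1}^{(1)}(\theta)+E_{\mathbf 1,2}^{(1)}(\theta)$, and identify this with a nonzero $H$-multiple of $\pi_\rho^n$ via Lemma~\ref{L:ulemma} and Proposition~\ref{P:EuReview}(e).

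For $u_n$, however, there is a genuine gap. From $\Exp_\rho^{\otimes n}\bigl(\delta_0(P_\zeta+V^\top f_\bv)\bigr)=\delta_1(V^\top f_\bv)\in\overline K^n$ you may only conclude that $\delta_0(P_\zeta+V^\top f_\bv)$ is a logarithm of \emph{some} algebraic point; this does \emph{not} imply it differs from $\bu$ by a period unless you know that $\delta_1(V^\top f_\bv)=\bv$. That identification is precisely the substance of the paper's argument: using the matrix identities $\Theta_n^\top V=X$ and $\Theta_{n-i}^\top X=X\phi_i$, one computes $(h_1,h_2)\,V^\top f_\bv=v_1h_n+\cdots+v_nh_1$, whence $\delta_0(V^\top f_\bv)=\delta_1(V^\top f_\bv)=\bv$. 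Only then does one obtain $\delta_0(P_\zeta)+\bv=\bu+d[a]\Pi_n$ for some $a\in\bA$, and extract $u_n$ from the bottom coordinate modulo $\overline K\pi_\rho^n$.

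Your argument is nonetheless rescued by your appeal to Lemma~\ref{L:ulemma}, which directly gives $a_nE_{\bu,1}^{(1)}(\theta)+E_{\bu,2}^{(1)}(\theta)=-u_n+v_n$ and hence $u_n\in\overline K(\Psi_\bv(\theta))$ with no period correction needed at all. But notice that this makes the entire Hartl--Juschka apparatus for $u_n$ superfluous: you have simply reproduced the direct proof of \S\ref{sec: period calculations}. The point of \S\ref{sec: period calculations HJ}, and the portability to higher genus that you invoke in your closing remark, is exactly to \emph{avoid} the residue computation behind Lemma~\ref{L:ulemma} and instead deduce everything from the $\delta_0,\delta_1$ formalism --- which requires the computation of $\delta_1(V^\top f_\bv)$ that you omitted.
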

\begin{proof}
By definition we have that $\overline K(\Psi_\bv(\theta)) = \overline K(\Upsilon_\bv(\theta))$, and we further see that $\overline K(\Upsilon_\bv(\theta)) = \overline K(P(\theta),E_{\mathbf{u},1}\twist(\theta),E_{\mathbf{u},2}\twist(\theta))$.  Lemma \ref{L:HJLemmas} implies that $\delta_0(P_i)$ is in the period lattice, and we deduce that the vectors $\delta_0(P_i)$ must form a generating set over $\F_q(t)$.     Finally, observing that $P_{1,i} = a_nE_{\bi,1}\twist + E_{\bi,2}\twist$ is regular at $\Xi$, we conclude that $P_{1,i}(\theta)$ gives the last coordinate $\bu_i$, our chosen generator of the $\F_q[t]$-free period lattice.  Thus we conclude that some $\overline K$-linear combination of these last coordinates gives $\pi_\rho^n$ and that $\pi_\rho^n \in \overline K(\Psi_\bv(\theta))$.

We now perform similar analysis on Equation \eqref{expPzeta}.  To proceed, we need to better understand the vector $f_\bv:= (f_{\bv,1},f_{\bv,2})^\top$.  

We recall from section \ref{sec: motive for rho_n}
\[
\Theta_i = \begin{pmatrix}
0 & 1\\
t-\theta &  a_i
\end{pmatrix},\quad
\phi_i = \begin{pmatrix}
0 & 1\\
t-\theta &  b_i
\end{pmatrix},\quad
V = \begin{pmatrix}
a_n & 1\\
1 &  0
\end{pmatrix}.
\]
If we denote
\[
X = \begin{pmatrix}
t-\theta & 0\\
0 &  1
\end{pmatrix},
\]
then we have the following equalities from \eqref{E:VPhiRelation}
\[
\Theta_n^\top V=V\twistinv \phi_n=X, \quad \Theta_{n-i}^\top X = X \phi_i, \quad 1 \leq i \leq n-1.
\]
By the definition of $f_\bv$ \eqref{D:fvdef}, 
and we find that
\[V^\top f_\bv = V^\top\Theta_n \cdots \Theta_2  \begin{pmatrix}
0 \\
v_1
\end{pmatrix} + 
V^\top\Theta_n \cdots \Theta_3  \begin{pmatrix}
0 \\
v_2
\end{pmatrix}
+
\cdots
+
V^\top\begin{pmatrix}
0 \\
v_n
\end{pmatrix}.\]
Then, in anticipation of calculating $\delta_0(V^\top f_\bv)$, using the above equalities, we find that
\begin{align*}
(h_1, h_2)V^\top f_\bv &= (h_1, h_2)\left (V^\top\Theta_n \cdots \Theta_2  \begin{pmatrix}
0 \\
v_1
\end{pmatrix} + 
V^\top\Theta_n \cdots \Theta_3  \begin{pmatrix}
0 \\
v_2
\end{pmatrix}
+
\cdots
+
V^\top\begin{pmatrix}
0 \\
v_n
\end{pmatrix}\right )\\
&= \left ( (0,v_1) \Theta_2^\top \cdots \Theta_n^\top V + \cdots + (0,v_{n-1})\Theta_n^\top V + (0,v_n) V\right )\begin{pmatrix}
h_1 \\
h_2
\end{pmatrix}\\
&= \left ( (0,v_1) X \phi_{n-2}\cdots \phi_1 + \cdots + (0,v_{n-2}) X \phi_1 +(0,v_{n-1}) X + (0,v_n) V\right )\begin{pmatrix}
h_1 \\
h_2
\end{pmatrix}.
\end{align*}
Now recall that (see Section \ref{sec: motive for rho_n}) 
\[
\begin{pmatrix}
h_{i+1}  \\
h_{i+2} 
\end{pmatrix} =
\phi_i \begin{pmatrix}
h_i  \\
h_{i+1} 
\end{pmatrix}.
\]
Since $(0,v_i)X=(0,v_i)$, it follows that
\begin{align*}
(h_1, h_2)V^\top f_\bv &= \left ( (0,v_1) X \phi_{n-2}\cdots \phi_1 + \cdots + (0,v_{n-2}) X \phi_1 +(0,v_{n-1}) X + (0,v_n) V\right )\begin{pmatrix}
h_1 \\
h_2
\end{pmatrix}\\
&= (0,v_1)\begin{pmatrix}
h_{n-1} \\
h_{n}
\end{pmatrix} +
\cdots
+ (0,v_{n-2})\begin{pmatrix}
h_2 \\
h_3
\end{pmatrix}
+ (0,v_{n-1})\begin{pmatrix}
h_1 \\
h_2
\end{pmatrix} +
(v_{n},0)\begin{pmatrix}
h_1 \\
h_2
\end{pmatrix}\\
&= v_1h_n + \cdots + v_nh_1.
\end{align*}
Thus, we find that
\[\delta_0(V^\top f_\bv) = \delta_1(V^\top f_\bv)  =  \begin{pmatrix}
v_1 \\
\vdots \\
v_n
\end{pmatrix}= \bv.\]
Then, returning to \eqref{expPzeta} we find that
\[\Exp_\rho\on(\delta_0(P_\zeta) + \bv) = \bv = \Exp_\rho\on(\bu).\]
Thus the two quantities in the exponential functions in the above equality differ by a period, so there exists some $a\in A$ such that
\[\delta_0(P_\zeta) + \bv = \bu + d[a]\Pi_n,\]
where $d[a]$ denotes the action of $a$ under $\text{Lie}(\rho\on)$.  By our above analysis of $\delta_0$, we conclude that the bottom coordinate $\alpha$ of $\delta_0(P_\zeta)$ is equal to
\[\alpha = a_nE_{\bu,1}\twist(\theta) + E_{\bu,2}\twist(\theta) =  u_n-v_n + a \pi_\rho^n.\]
Since we have proved above that $\pi_\rho^n \in \overline K(\Psi_\bv(\theta))$ and since $v_n \in \overline K$, it follows that $u_n\in \overline K(\Psi_\bv(\theta))$ as well.
\end{proof}

\subsection{Independence in $\text{Ext}^1_\mathcal{T}(\mathbf 1,\mathcal{X}_\rho^{\otimes n})$} ${}$\par

For $b\in B$ and $\mathbf{v}:=\Exp_\rho^{\otimes n}(Z_n(b)) \in M_{n \times 1}(H)$, where $Z_n(b)$ is the log-algebraic vector of Theorem \ref{theorem: Anderson-Thakur}, we call the corresponding $t$-motive $\mathcal{X}_n(b):=\mathcal{X}_\mathbf{v}$ the zeta $t$-motive associated to $\zeta_\rho( b,n)$. We also denote by $\Phi_n(b)$ and $\Psi_n(b)$ the corresponding matrices. We have a short exact sequence
	\[ 0 \lra \mathcal{X}^{\otimes n}_\rho \lra \mathcal{X}_n(b) \lra \mathbf{1} \lra 0\]
where $\mathbf{1}$ is the trivial $t$-motive.

We follow closely \cite{CP12}, Section 4.2. The group $\text{Ext}^1_\mathcal{T}(\mathbf 1,\mathcal{X}_\rho^{\otimes n})$ has the structure of a $K$-vector space by pushing along $\mathcal{X}_\rho^{\otimes n}$. Let $\mathcal{X} \in \text{Ext}^1_\mathcal{T}(\mathbf 1,\mathcal{X}_\rho^{\otimes n})$ be an extension $\mathcal{X}$ represented by $\mathbf{v} \in M_{1 \times 2}(\overline K)$ and $a \in A$ whose corresponding matrix is $E \in M_{2 \times 2}(\overline K[t])$, then the extension $a_* \mathcal{X}$ is represented by $\mathbf{v} E \in M_{1 \times 2}(\overline K)$. We will show the following proposition (compare to \cite{CP11}, Theorem 4.4.2):

\begin{proposition} \label{proposition: Ext}
Suppose that $\mathbf{u}_1,\ldots,\mathbf{u}_m \in M_{n \times 1}(\bC_\infty)$ with $\Exp_\rho^{\otimes n}(\mathbf{u}_i)=\mathbf{v}_i \in M_{n \times 1}(\overline K)$. If $\pi^n_\rho, \mathbf{u}_{1,n},\ldots,\mathbf{u}_{m,n}$ are linearly independent over $K$, then the classes of $\mathcal{X}_{\mathbf{v}_i} (1 \leq i \leq n)$ in $\text{Ext}^1_\mathcal{T}(\mathbf 1,\mathcal{X}_\rho^{\otimes n})$ are linearly independent over $K$.
\end{proposition}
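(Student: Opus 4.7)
The plan is to argue by contradiction, closely following the template established by Chang--Papanikolas in \cite[Theorem 4.4.2]{CP11}. Suppose there exist $a_1,\dots,a_m \in K$, not all zero, such that $\sum_{i=1}^m a_i \cdot [\mathcal{X}_{\mathbf{v}_i}] = 0$ in $\text{Ext}^1_{\mathcal{T}}(\mathbf{1},\mathcal{X}_\rho^{\otimes n})$; after clearing denominators I may assume $a_i \in A$ for every $i$, and I let $E_{a_i} \in M_{2\times 2}(\overline K[t])$ be the endomorphism matrix attached to $a_i$ as in Section \ref{sec: endos}. The vanishing of the Baer sum in $\text{Ext}^1$ is equivalent to the existence of a row vector $\delta \in M_{1\times 2}(\overline K[t])$ such that
\[
\delta^{(-1)}\Phi_\rho^{\otimes n} - \delta \;=\; \sum_{i=1}^m h_{\mathbf{v}_i}^\top E_{a_i}.
\]

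The next step is to transport this identity through the rigid analytic trivialization. Section \ref{section: log rho n} furnishes the functional equation $g_{\mathbf{v}_i}^{\top,(-1)}\Phi_\rho^{\otimes n} - g_{\mathbf{v}_i}^\top = h_{\mathbf{v}_i}^\top$, and since $a_i$ is an endomorphism of the motive underlying $\mathcal{X}_\rho^{\otimes n}$, the matrix $E_{a_i}$ intertwines with $\Phi_\rho^{\otimes n}$ via $E_{a_i}^{(-1)}\Phi_\rho^{\otimes n} = \Phi_\rho^{\otimes n} E_{a_i}$. Setting
\[
\mu := \sum_{i=1}^m g_{\mathbf{v}_i}^\top E_{a_i} - \delta \in M_{1\times 2}(\mathbb{L}),
\]
a direct calculation combining these two identities yields $\mu^{(-1)}\Phi_\rho^{\otimes n} = \mu$. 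Consequently $\mu \Psi_\rho^{\otimes n}$ is $\sigma$-invariant, hence belongs to $M_{1\times 2}(\mathbb{F}_q(t))$, and the regularity of $\Psi_\rho^{\otimes n}$ and of the bottom row of $\Psi_{\mathbf{v}_i}$ at $t=\theta$ implies that $\mu \Psi_\rho^{\otimes n}$ specializes to an element of $K^{1\times 2}$ at $t=\theta$.

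Specializing the first coordinate of $(\mu \Psi_\rho^{\otimes n})(\theta)$ then delivers the contradiction. By Remark \ref{remark: periods2} together with Lemma \ref{L:ulemma}, the first coordinate of $g_{\mathbf{v}_i}^\top \Psi_\rho^{\otimes n}(\theta)$ equals $\mathbf{u}_{i,n} - \mathbf{v}_{i,n}$. Lemma \ref{lemma:endo} forces $E_{a_i}(\theta)$ to have first column $(a_i(\theta),0)^\top$ with $a_i(\theta) \in K^\times$, so the combined contribution of the $g_{\mathbf{v}_i}^\top E_{a_i}$ terms to the first coordinate collapses to $\sum_i a_i(\theta)(\mathbf{u}_{i,n} - \mathbf{v}_{i,n})$. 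The remaining term $\delta(\theta)\Psi_\rho^{\otimes n}(\theta)$ is an $\overline K$-linear combination of entries of the first column of $\Psi_\rho^{\otimes n}(\theta)$, whose structure is described via Remark \ref{remark: periods} and Proposition \ref{P:EuReview}(e) in terms of $K$-multiples of $\pi_\rho^n$. Rearranging the resulting identity then produces a nontrivial $K$-linear relation among $\pi_\rho^n,\mathbf{u}_{1,n},\ldots,\mathbf{u}_{m,n}$, contradicting the hypothesis.

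The main obstacle will be this last step: establishing that the $\delta(\theta)\Psi_\rho^{\otimes n}(\theta)$ contribution to the first coordinate is controlled purely by a $K$-multiple of $\pi_\rho^n$ (plus an element of $K$), rather than introducing extraneous transcendental quantities. This requires aligning the chosen bases so that Lemma \ref{lemma:endo} and the period description from Remark \ref{remark: periods} operate in the same coordinate system, and carefully tracking which entries of $\Psi_\rho^{\otimes n}(\theta)$ encode $\pi_\rho^n$ versus elements of $\overline K$.
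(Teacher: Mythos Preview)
Your overall strategy is the same as the paper's: assume a $K$-linear dependence in $\mathrm{Ext}^1$, clear denominators to $A$, write down the resulting $\sigma$-difference identity, pass to a vector $\mu$ with $\mu^{(-1)}\Phi_\rho^{\otimes n}=\mu$ (equivalently, $\mu\Psi_\rho^{\otimes n}\in\mathbb{F}_q(t)^{1\times 2}$), and specialize at $t=\theta$ to produce a forbidden linear relation. The difference-equation part of your write-up is correct and matches the paper exactly.

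The specialization step, however, is tangled. You announce that you will look at the first coordinate of $(\mu\Psi_\rho^{\otimes n})(\theta)$, but the computation you then carry out---using that the first column of $E_{a_i}(\theta)$ is $(a_i(\theta),0)^\top$---is actually the computation of the first coordinate of $\mu(\theta)$ itself, namely $\sum_i a_i(\theta)\,g_{\mathbf{v}_i,1}(\theta)-\delta_1(\theta)$. In that picture the ``remaining $\delta$-term'' is simply $\delta_1(\theta)\in\overline K$, not $\delta(\theta)\Psi_\rho^{\otimes n}(\theta)$. The period enters on the other side: writing $\mu=c\,(\Psi_\rho^{\otimes n})^{-1}$ with $c\in\mathbb{F}_q(t)^{1\times 2}$, the first coordinate of $\mu(\theta)$ equals $c_1(\theta)[(\Psi_\rho^{\otimes n})^{-1}]_{1,1}(\theta)+c_2(\theta)[(\Psi_\rho^{\otimes n})^{-1}]_{2,1}(\theta)$, and it is Remark~\ref{remark: periods} (about the first column of $(\Psi_\rho^{\otimes n})^{-1}$, not of $\Psi_\rho^{\otimes n}$) that forces this into $K\pi_\rho^n$. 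Your ``main obstacle'' paragraph is thus aiming at the wrong matrix; once you swap $\Psi_\rho^{\otimes n}$ for $(\Psi_\rho^{\otimes n})^{-1}$ and specialize $\mu$ rather than $\mu\Psi_\rho^{\otimes n}$, the obstacle disappears.

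One further point you omit: the relation you obtain at $t=\theta$ is an $\overline K$-linear relation among $1,\mathbf{u}_{1,n},\ldots,\mathbf{u}_{m,n},\pi_\rho^n$ (the $1$ coming from $\delta_1(\theta)$ and the $\mathbf{v}_{i,n}\in\overline K$), whereas the hypothesis concerns $K$-linear independence among $\pi_\rho^n,\mathbf{u}_{1,n},\ldots,\mathbf{u}_{m,n}$. The paper closes this gap by invoking the argument from \cite[p.~29]{Gre17b} (Yu's subgroup theorem applied to the relevant $t$-module), which upgrades the $\overline K$-relation with constant term to a genuine $K$-relation without one. You should include this step.
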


\begin{proof}
The proof follows closely that of \cite{CP11}, Theorem 4.4.2. Suppose that there exist $e_1,\ldots,e_m \in K$ not all zero so that $N=e_{1*} X_1+\ldots+e_{m*} X_m$ is trivial in $\text{Ext}^1_\mathcal{T}(\mathbf 1,\mathcal{X}_\rho^{\otimes n})$. We can suppose that for $1 \leq i \leq m$, $e_i$ belongs to $A$ and is represented by $M_i \in M_{2 \times 2}(\overline K[t])$. Then the extension $N=e_{1*} X_1+\ldots+e_{m*} X_m$ is represented by $h_{\mathbf{v}_1} M_1+\ldots+h_{\mathbf{v}_m} M_m$ and we have
\begin{align*}
\Phi_N &=\begin{pmatrix}
\Phi_\rho^{\otimes n} & 0  \\
\sum_{i=1}^m h_{\mathbf{v}_i} M_i & 1
\end{pmatrix} \in M_{3 \times 3}(\overline K[t]), \\
\Psi_N &=\begin{pmatrix}
\Psi_\rho^{\otimes n} & 0  \\
(\sum_{i=1}^m g_{\mathbf{v}_i} M_i)\Psi_\rho^{\otimes n}  & 1
\end{pmatrix} \in M_{3 \times 3}(\mathbb{T}).
\end{align*}
Since this extension is trivial in $\text{Ext}^1_\mathcal{T}(\mathbf 1,\mathcal{X}_\rho^{\otimes n})$, there exists a matrix 
\begin{align*}
\gamma &=\begin{pmatrix}
\text{Id}_2 & 0  \\
(\gamma_1,\gamma_2)  & 1
\end{pmatrix} \in M_{3 \times 3}(\overline K[t])
\end{align*}
such that $\gamma^{(-1)} \Phi_N=\text{diag}(\Phi_\rho^{\otimes n},1) \gamma$. By \cite{Pap08}, Section 4.1.6, there exists 
\begin{align*}
\delta &=\begin{pmatrix}
\text{Id}_2 & 0  \\
(\delta_1,\delta_2)  & 1
\end{pmatrix} \in M_{3 \times 3}(\Fq(t))
\end{align*}
such that $\gamma \Psi_N=\text{diag}(\Psi_\rho^{\otimes n},1) \delta$. It follows that 
	\[ (\gamma_1,\gamma_2)+(\sum_{i=1}^m g_{\mathbf{v}_i} M_i)=(\delta_1,\delta_2) (\Psi_\rho^{\otimes n})^{-1}.\]
Specializing the first coordinates at $t=\theta$ and recalling that by Lemma \ref{lemma:endo}, we have $(M_i)_{1,1}(\theta) \in K^\times$ and $(M_i)_{2,1}(\theta)=0$, we obtain
	\[ \gamma_1(\theta)+\sum_{i=1}^m g_{\mathbf{v}_i,1}(\theta) (M_i)_{1,1}(\theta)=\delta_1(\theta) [(\Psi_\rho^{\otimes n})^{-1}]_{1,1}(\theta)+\delta_2(\theta) [(\Psi_\rho^{\otimes n})^{-1}]_{2,1}(\theta).\]
By Lemma \ref{L:ulemma}, we have $g_{\mathbf{v}_i,1}(\theta)=\mathbf{u}_{i,n}-\mathbf{v}_{i,n}$ and $[(\Psi_\rho^{\otimes n})^{-1}]_{1,1}(\theta),[(\Psi_\rho^{\otimes n})^{-1}]_{2,1}(\theta) \in K \pi_\rho^n$. Since $(M_i)_{1,1}(\theta) \in K^\times$ (see Section \ref{sec: endos}), we get a non trivial $\overline K$-linear relation between $1, \mathbf{u}_{1,n},\ldots,\mathbf{u}_{m,n}, \pi_\rho^n$. By \cite{Gre17b}, page 29 (proof of Theorem 7.1), it implies a non trivial $K$-linear relation between $\mathbf{u}_{1,n},\ldots,\mathbf{u}_{m,n}, \pi_\rho^n$. Thus we get a contradiction. 

The proof is finished.
\end{proof}

\subsection{An application of Hardouin's work} \label{sec: Galois groups} ${}$\par

We now apply Hardouin's work to our context to determine Galois groups of $t$-motives of our interest. We work with the neutral Tannakian category of $t$-motives $\mathcal T$ over $F=\Fq(t)$ endowed with the fiber functor $\omega:M \mapsto H_{\text{Betti}}(M)$. We consider the irreducible object $\mathcal Y=\mathcal{X}^{\otimes n}_\rho$ and extensions of $\mathbf 1$ by $\mathcal{X}^{\otimes n}_\rho$. Hardouin's work turns out to be a powerful tool and allows us to prove the proposition below which generalizes the results of Papanikolas \cite{Pap08} (for the Carlitz module $C$) and Chang-Yu \cite{CY07} (for the tensor powers $C^{\otimes n}$ of the Carlitz module). 

\begin{proposition} \label{proposition: dimension}
Let $n \geq 1$ be an integer and $b$ be an element in $B$. Then the unipotent radical of $\Gamma_{\mathcal{X}_n(b)}$ is equal to the $\Fq[t]$-vector space $\Fq[t]^2$ of dimension 2. In particular,
	\[ \dim \Gamma_{\mathcal{X}_n(b)}=\dim \Gamma_{\mathcal{X}_\rho^{\otimes n}}+2=4.\]
\end{proposition}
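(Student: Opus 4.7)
The plan is to apply Hardouin's Theorem \ref{theorem:Hardouin} to the short exact sequence
\[ 0 \to \mathcal{X}_\rho^{\otimes n} \to \mathcal{X}_n(b) \to \mathbf{1} \to 0, \]
which exhibits $\mathcal{X}_n(b)$ as an extension of $\mathbf{1}$ by $\mathcal{Y} := \mathcal{X}_\rho^{\otimes n}$. This decomposes $\Gamma_{\mathcal{X}_n(b)} = R_u(\mathcal{X}_n(b)) \rtimes \Gamma_{\mathcal{Y}}$, and since Proposition \ref{proposition: Galois group} already supplies $\dim \Gamma_{\mathcal{Y}} = \dim \operatorname{Res}_{K/\Fq[t]} \mathbb{G}_{m,K} = 2$, the problem reduces to identifying the unipotent radical $R_u(\mathcal{X}_n(b)) = \omega(\mathcal{V})$, where $\mathcal{V} \subseteq \mathcal{Y}$ is the smallest sub-object such that $\mathcal{X}_n(b)/\mathcal{V}$ is a trivial extension.

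First I would verify the four hypotheses of Hardouin's theorem. The group $\Gamma_{\mathcal{Y}}$ is a torus, hence reductive, so every $\Gamma_{\mathcal{Y}}$-module is completely reducible. The diagonal $\mathbb{G}_m$ sits in the centre of $\operatorname{Res}_{K/\Fq[t]} \mathbb{G}_{m,K}$ and acts by scalar multiplication on the two-dimensional $\Fq(t)$-space $\omega(\mathcal{Y})$, which is manifestly isotypic. Reducedness of $\Gamma_{\mathcal{X}_n(b)}$ is automatic from Papanikolas's theory recalled in Section \ref{sec:Papanikolas}.

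Next I would classify the sub-objects of $\mathcal{Y}$. Because $\operatorname{Res}_{K/\Fq[t]} \mathbb{G}_{m,K}$ is non-split over $\Fq(t)$, its natural representation on $\omega(\mathcal{Y})$ is absolutely irreducible over $\Fq(t)$: after base change to $K$ the representation splits as two Galois-conjugate characters, so no proper non-zero $\Fq(t)$-rational subspace is $\Gamma_{\mathcal{Y}}$-stable. By Tannakian duality the only sub-objects of $\mathcal{Y}$ in $\mathcal{T}$ are $0$ and $\mathcal{Y}$ itself, hence $\mathcal{V} \in \{0,\mathcal{Y}\}$.

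It remains to rule out $\mathcal{V} = 0$, i.e.\ to show that the extension class of $\mathcal{X}_n(b)$ is non-trivial. By Theorem \ref{theorem: Anderson-Thakur} the log-algebraic vector $Z_n(b)$ satisfies $\Exp_\rho^{\otimes n}(Z_n(b)) \in H^n$ and has last coordinate $C_n \zeta_\rho(b,n)$, so Proposition \ref{proposition: Ext} (with $m=1$) reduces triviality of $\mathcal{X}_n(b)$ to the $K$-linear dependence of $\pi_\rho^n$ and $C_n\zeta_\rho(b,n)$, which is precisely excluded by Theorem \ref{T:LinearIndependence}. Therefore $\mathcal{V}=\mathcal{Y}$, the unipotent radical is $\omega(\mathcal{Y}) \simeq \Fq[t]^2$ (i.e.\ $\mathbb{G}_a^2$ as an algebraic group), and we conclude $\dim \Gamma_{\mathcal{X}_n(b)} = 2+2 = 4$. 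I expect the main obstacle to be the classification of sub-objects of $\mathcal{Y}$: one must carefully translate between sub-objects of $\mathcal{Y}$ in $\mathcal{T}$ and $\Gamma_{\mathcal{Y}}$-stable $\Fq(t)$-subspaces of $\omega(\mathcal{Y})$, and use the non-splitness of the Weil restriction torus to exclude any proper non-zero such subspace.
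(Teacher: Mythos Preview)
Your proposal is correct and follows essentially the same route as the paper: verify Hardouin's four hypotheses, reduce to $\mathcal{V}\in\{0,\mathcal{X}_\rho^{\otimes n}\}$ by simplicity, and exclude $\mathcal{V}=0$ via Proposition~\ref{proposition: Ext} together with Theorem~\ref{T:LinearIndependence}. The only variation is that the paper obtains simplicity of $\mathcal{X}_\rho^{\otimes n}$ by citing \cite{Gre17b}, Lemma~7.2 (i.e.\ Proposition~\ref{prop: simple modules}) rather than your Tannakian argument via the non-split torus; note one small terminological slip---the natural representation of $\operatorname{Res}_{K/\Fq(t)}\mathbb{G}_{m,K}$ is irreducible over $\Fq(t)$ but not \emph{absolutely} irreducible, as your own remark about it splitting into Galois-conjugate characters after base change confirms.
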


\begin{proof}
We claim that the assumptions of Theorem \ref{theorem:Hardouin} are satisfied for the $t$-motive $\mathcal{X}^{\otimes n}_\rho$ since 
\begin{itemize}
\item[1.] By Proposition \ref{proposition: Galois group}, the Galois group $\Gamma_{\mathcal{X}_\rho^{\otimes n}}$ of $\mathcal{X}^{\otimes n}_\rho$ is a torus. Thus every $\Gamma_{\mathcal{X}_\rho^{\otimes n}}$ is completely reducible (compare to \cite{CP11}, Corollary 3.5.7).

\item[2.] It is clear that the center of $\Gamma_{\mathcal{X}_\rho^{\otimes n}}$, which is itself, contains $\mathbb{G}_{m,\Fq(t)}$.

\item[3.] The action of $\mathbb{G}_{m,\Fq(t)}$ on $H_{\text{Betti}}(\mathcal{X}^{\otimes n}_\rho)$ is isotypic. In fact, the weights are all equal to $n$.

\item[4.] The Galois groups of $t$-motives are reduced (see \cite{Pap08} and also \cite{HJ16}, Proposition 6.2 for $A$-motives).
\end{itemize}

We apply Theorem \ref{theorem:Hardouin} to the $t$-motive $\mathcal{X}_n(b)$ which is an extension of $\mathbf{1}$ by $\mathcal{X}^{\otimes n}_\rho$. Thus there exists a sub-object $\mathcal{V}$ of $\mathcal{X}^{\otimes n}_\rho$ such that $\mathcal{X}_n(b)/\mathcal{V}$ is a trivial extension of $\mathbf{1}$ by $\mathcal{X}^{\otimes n}_\rho/\mathcal{V}$. By \cite{Gre17b}, Lemma 7.2, we know that $\mathcal{X}^{\otimes n}_\rho$ is simple. Thus either $\mathcal{V}=0$ or $\mathcal{V}=\mathcal{X}^{\otimes n}_\rho$.

We claim that $\mathcal{V}=\mathcal{X}^{\otimes n}_\rho$. In fact, suppose that $\mathcal{V}=0$. We deduce that $\mathcal{X}_n(b)$ is a trivial extension of $\mathbf{1}$ by $\mathcal{X}^{\otimes n}_\rho$. It follows that $\pi_\rho^n$ and $\zeta_\rho(b,n)$ are linearly dependent over $K$. We get a contradiction by Proposition \ref{proposition: Ext} and Theorem \ref{T:LinearIndependence}.

Since $\mathcal{V}=\mathcal{X}^{\otimes n}_\rho$, by Theorem \ref{theorem:Hardouin}, the unipotent radical of the Galois group $\Gamma_{\mathcal{X}_n(b)}$ is equal to $H_{\text{Betti}}(M)(\mathcal{X}^{\otimes n}_\rho)$ that is an $\Fq[t]$-vector space of dimension 2. The Theorem follows immediately.
\end{proof}

As a consequence, we obtain a generalization of \cite{CY07}, Theorem 4.4.

\begin{corollary}
Let $n \geq 1$ be an integer. Then for any $b \in B$, the quantities $\pi_\rho$ and $\zeta_\rho( b,n)$ are algebraically independent over $\overline K$.
\end{corollary}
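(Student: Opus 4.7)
The plan is to derive the corollary directly from Proposition \ref{proposition: dimension} via Papanikolas's transcendence criterion and the block structure of $\Psi_n(b)$. By Theorem \ref{theorem: Papanikolas} applied to $\mathcal{X}_n(b)$, Proposition \ref{proposition: dimension} yields $\mathrm{tr.deg}_{\overline K} \overline K(\Psi_n(b)(\theta)) = 4$, while the same theorem applied to the sub-motive $\mathcal{X}_\rho^{\otimes n}$, together with Proposition \ref{proposition: Galois group}, yields $\mathrm{tr.deg}_{\overline K} \overline K(\Psi_\rho^{\otimes n}(\theta)) = 2$.

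The block-triangular shape of $\Psi_n(b)$ reveals that $\overline K(\Psi_n(b)(\theta)) = F_0(\xi_1,\xi_2)$, where $F_0 := \overline K(\Psi_\rho^{\otimes n}(\theta))$ and $\xi_1,\xi_2$ are the two entries of the extension row $(g_{\bv,1},g_{\bv,2})\Psi_\rho^{\otimes n}$ evaluated at $t=\theta$. Since $\mathrm{tr.deg}_{F_0} F_0(\xi_1,\xi_2) = 4-2 = 2$ and there are only two generators, I conclude that $\xi_1$ and $\xi_2$ must be algebraically independent over $F_0$; in particular, $\xi_1$ is transcendental over $F_0$.

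To finish, I would use Remark \ref{remark: periods2} combined with Lemma \ref{L:ulemma} to identify $\xi_1 = u_n - v_n$, where $u_n$ is the last coordinate of $\bu = Z_n(b)$ and $v_n \in H \subset \overline K$. By Theorem \ref{theorem: Anderson-Thakur}, $u_n = C_n\, \zeta_\rho(b,n)$ with $C_n \in H \subset \overline K$, so $\zeta_\rho(b,n)$ differs from $\xi_1$ only by an algebraic factor and an algebraic translate, hence is transcendental over $F_0 \supseteq \overline K(\pi_\rho^n)$. Finally, since $\pi_\rho$ is algebraic over $\overline K(\pi_\rho^n)$ (being a root of $x^n - \pi_\rho^n$), $\zeta_\rho(b,n)$ is transcendental over $\overline K(\pi_\rho)$, yielding the desired algebraic independence of $\pi_\rho$ and $\zeta_\rho(b,n)$ over $\overline K$. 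The main difficulty has already been absorbed into Proposition \ref{proposition: dimension} (which uses Hardouin's theorem to pin down the unipotent radical); the remaining step here is the essentially routine unpacking of the block structure above.
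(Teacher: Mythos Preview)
Your argument is correct and follows essentially the same route as the paper's proof, which simply cites Proposition~\ref{proposition: dimension}, Theorem~\ref{theorem: Papanikolas}, and the period calculations of Section~\ref{sec: period calculations} without spelling out the block-triangular unpacking you perform. One small point you leave implicit in the final sentence is the transcendence of $\pi_\rho$ itself (needed to upgrade ``$\zeta_\rho(b,n)$ transcendental over $\overline K(\pi_\rho)$'' to genuine algebraic independence); this follows at once from $\mathrm{tr.deg}_{\overline K} F_0 = 2$ together with the lemma in Section~\ref{sec: period calculations} giving $F_0 = \overline K(\pi_\rho^n, Z)$, so it is not a gap but worth stating.
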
 

\begin{proof}
This is a direct consequence of Theorem \ref{proposition: dimension}, Theorem \ref{theorem: Papanikolas} and the calculations from section \ref{sec: period calculations}.
\end{proof}

We obtain the following theorem which could be considered as a generalization of \cite{CP11}, Theorem 5.1.5 in our context. 

\begin{theorem} \label{theorem: algebraic independence log}
Suppose that $\mathbf{u}_1,\ldots,\mathbf{u}_m \in M_{n \times 1}(\bC_\infty)$ with $\Exp_\rho^{\otimes n}(\mathbf{u}_i)=\mathbf{v}_i \in M_{n \times 1}(\overline K)$. If $\pi^n_\rho, \mathbf{u}_{1,n},\ldots,\mathbf{u}_{m,n}$ are linearly independent over $K$, then they are algebraically independent over $\overline K$.
\end{theorem}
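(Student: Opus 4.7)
The plan is to package the logarithms $\mathbf{u}_1, \ldots, \mathbf{u}_m$ into a single $t$-motive, compute its motivic Galois group by combining Proposition \ref{proposition: Ext} with Hardouin's Corollary \ref{cor:Hardouin}, and then read off algebraic independence from Papanikolas's transcendence theorem.

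For each $i$, the construction of Section \ref{section: log rho n} produces a zeta $t$-motive $\mathcal{X}_{\mathbf{v}_i}$ fitting in
\[
0 \longrightarrow \mathcal{X}_\rho^{\otimes n} \longrightarrow \mathcal{X}_{\mathbf{v}_i} \longrightarrow \mathbf{1} \longrightarrow 0.
\]
Let $\mathcal{X}$ be the Baer sum of these $m$ extensions, realized concretely as the $t$-motive whose rigid analytic trivialization $\Psi_\mathcal{X}$ has upper-left block $\Psi_\rho^{\otimes n}$ and whose $m$ additional rows encode the individual $\mathcal{X}_{\mathbf{v}_i}$. By Proposition \ref{proposition: Ext}, the hypothesis that $\pi_\rho^n, \mathbf{u}_{1,n}, \ldots, \mathbf{u}_{m,n}$ are $K$-linearly independent implies that the classes $[\mathcal{X}_{\mathbf{v}_i}]$ are $K$-linearly independent in $\text{Ext}^1_{\mathcal T}(\mathbf{1}, \mathcal{X}_\rho^{\otimes n})$; clearing denominators, they are also $\End(\mathcal{X}_\rho^{\otimes n}) = A$-linearly independent.

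Next, the four hypotheses of Corollary \ref{cor:Hardouin} hold for $\mathcal Y = \mathcal{X}_\rho^{\otimes n}$ exactly as checked in the proof of Proposition \ref{proposition: dimension}: the Galois group $\Gamma_{\mathcal{X}_\rho^{\otimes n}}$ is the torus $\textrm{Res}_{K/\Fq[t]}\Gm$, contains $\Gm$ in its center acting isotypically of weight $n$, and motivic Galois groups of $t$-motives are reduced. Hence the unipotent radical of $\Gamma_\mathcal{X}$ is $\omega(\mathcal{X}_\rho^{\otimes n})^m$, an $\Fq(t)$-vector space of dimension $2m$. Since $\Gamma_\mathcal{X} = R_u(\Gamma_\mathcal{X}) \rtimes \Gamma_{\mathcal{X}_\rho^{\otimes n}}$, we obtain $\dim \Gamma_\mathcal{X} = 2m + 2$. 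Applying Theorem \ref{theorem: Papanikolas} then yields
\[
\text{tr. deg}_{\overline K}\, \overline K(\Psi_\mathcal{X}(\theta)) = 2m + 2.
\]

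From the period calculations in Sections \ref{sec: period calculations} and \ref{sec: period calculations HJ} together with Lemma \ref{L:ulemma}, the field $\overline K(\Psi_\mathcal{X}(\theta))$ is generated over $\overline K$ by exactly $2m+2$ elements: $\pi_\rho^n$ and one auxiliary quantity $Z$ arising from $\Psi_\rho^{\otimes n}$, together with the pair $\mathbf{u}_{i,n}, E_{\mathbf{u}_i,1}^{(1)}(\theta)$ for each $i = 1, \ldots, m$. Since the number of generators matches the transcendence degree, all $2m+2$ of them must be algebraically independent over $\overline K$; in particular the subset $\pi_\rho^n, \mathbf{u}_{1,n}, \ldots, \mathbf{u}_{m,n}$ is algebraically independent, as desired. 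I expect the main obstacle to be this last step: one must use Lemma \ref{L:ulemma} to pivot from the natural generators $E_{\mathbf{u}_i,1}^{(1)}(\theta), E_{\mathbf{u}_i,2}^{(1)}(\theta)$ of each lower block to the pair $\mathbf{u}_{i,n}, E_{\mathbf{u}_i,1}^{(1)}(\theta)$, so that $\pi_\rho^n$ and the $\mathbf{u}_{i,n}$ themselves appear in a \emph{minimal} generating set; without this pivot, the transcendence degree bound would not directly imply algebraic independence of only the chosen $m+1$ elements.
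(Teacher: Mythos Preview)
Your proof is essentially the same as the paper's (Proposition~\ref{proposition: Ext} $\Rightarrow$ Corollary~\ref{cor:Hardouin} $\Rightarrow$ Theorem~\ref{theorem: Papanikolas}), and your added discussion of why the specific quantities $\pi_\rho^n,\mathbf{u}_{1,n},\ldots,\mathbf{u}_{m,n}$ sit inside a minimal generating set for $\overline K(\Psi_\mathcal{X}(\theta))$ is a useful elaboration of what the paper leaves as ``follows immediately.''

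One terminological correction: the object $\mathcal{X}$ you describe---with $\Psi_\rho^{\otimes n}$ in the upper-left block and $m$ additional rows encoding the $\mathcal{X}_{\mathbf{v}_i}$---is \emph{not} the Baer sum of the extensions. The Baer sum is a single extension of $\mathbf{1}$ by $\mathcal{X}_\rho^{\otimes n}$ representing the class $[\mathcal{X}_{\mathbf{v}_1}]+\cdots+[\mathcal{X}_{\mathbf{v}_m}]$, and its Galois group would have unipotent radical of dimension at most $2$, not $2m$. What you have written down is the extension of $\mathbf{1}^m$ by $\mathcal{X}_\rho^{\otimes n}$ obtained by stacking the data; it generates the same Tannakian subcategory as the direct sum $\mathcal{X}_{\mathbf{v}_1}\oplus\cdots\oplus\mathcal{X}_{\mathbf{v}_m}$ to which Corollary~\ref{cor:Hardouin} (and the paper) literally applies, so your computation of $\dim\Gamma_\mathcal{X}=2m+2$ is correct once the name is fixed.
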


\begin{proof}
By Proposition \ref{proposition: Ext}, we deduce that the classes of $\mathcal{X}_{\mathbf{v}_i} (1 \leq i \leq n)$ in $\text{Ext}^1_\mathcal{T}(\mathbf 1,\mathcal{X}_\rho^{\otimes n})$ are linearly independent over $K$. By Corollary \ref{cor:Hardouin}, the unipotent part of the Galois group of the direct sum $\mathcal{X}_{\mathbf{v}_1} \oplus \ldots \oplus \mathcal{X}_{\mathbf{v}_n}$ is of dimension $2n$. Thus the Theorem follows immediately from Theorem \ref{theorem: Papanikolas}.
\end{proof}


\section{Algebraic relations among Anderson's zeta values} \label{sec: Anderson zeta values}

\subsection{Direct sums of $t$-motives} ${}$\par

Let $m \in \mathbb N, m \geq 1$. To study Anderson's zeta values $\zeta_\rho(b,n)$ for $1 \leq n \leq m$ and $\pi_\rho$ simultaneously, we set 
\begin{align*}
\mathcal S:=\{n \in \mathbb N: 1 \leq n \leq m \text{ such that } p \nmid n \text{ and } (q-1) \nmid n \},
\end{align*}
and consider the direct sum $t$-motive
	\[ \mathcal{X}(b):=\bigoplus_{n \in \mathcal S} \mathcal{X}_n(b) \]
and define block diagonal matrices
	\[ \Phi(b):=\bigoplus_{n \in \mathcal S} \Phi_n(b), \quad \Psi(b):=\bigoplus_{n \in \mathcal S} \Psi_n(b).\]
Then $\Phi(b)$ represents multiplication by $\sigma$ on $\mathcal{X}(b)$ and $\Psi(b)$ is a rigid analytic trivialization of $\Phi(b)$. We would like to understand the Galois group $\Gamma_{\mathcal{X}(b)}$ of the $t$-motive $\mathcal{X}(b)$ and to calculate the dimension of this Galois group. 

We first have 
	\[ \Gamma_{\mathcal{X}(b)} \subseteq \bigoplus_{n \in \mathcal S} \Gamma_{\mathcal{X}_n(b)}=\bigoplus_{n \in \mathcal S}  \begin{pmatrix}
\textrm{Res}_{K/\Fq[t]} \mathbb{G}_{m,K} & 0\\
* &  1
\end{pmatrix}\]
For $n=1$, the $t$-motive $\mathcal{X}_1(b)$ contains $\rho$. It follows that $\rho$ is also contained in $\mathcal{X}(b)$. We consider $\mathcal T_{\mathcal{X}(b)}$ and $\mathcal T_{\mathcal{X}_1(b)}$, the strictly full Tannakian subcategories of the category $\mathcal T$ of $t$-motives which are generated by $\mathcal{X}(b)$ and $\mathcal{X}_1(b)$ respectively. Thus we get a functor from $\mathcal T_{\mathcal{X}_1(b)}$ to $\mathcal T_{\mathcal{X}(b)}$. By Tannakian duality, we have a surjective map of algebraic groups over $\Fq(t)$
	\[ \pi:\Gamma_{\mathcal{X}(b)} \twoheadrightarrow \Gamma_{\mathcal{X}_1(b)}=\textrm{Res}_{K/\Fq[t]} \mathbb{G}_{m,K} \]
where we have the last equality by Proposition \ref{proposition: Galois group}. By Equation \eqref{eq: Galois Frob}, this map $\pi$ is in fact the projection on the upper left-most corner of elements of $\Gamma_{\mathcal{X}(b)}$. We denote by $U(b)$ the kernel of $\pi$. It follows that $U(b)$ is contained in the unipotent group
	\[ U:=\bigoplus_{n \in \mathcal S}  \begin{pmatrix}
\textrm{Id}_2 & 0\\
* &  1
\end{pmatrix}.\]

We prove the following result similar to \cite{CY07}, Section 4.3.
\begin{proposition} \label{prop: unipotent direct sum}
We keep the previous notation. Then we have
	\[ U(b)=\bigoplus_{n \in \mathcal S}  \begin{pmatrix}
\textrm{Id}_2 & 0\\
* &  1
\end{pmatrix}.\]
\end{proposition}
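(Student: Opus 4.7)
The plan is to establish $U(b) = U$ via a weight-decomposition argument, using the conjugation action of a central torus inside $\Gamma_{\mathcal{X}(b)}$, combined with the fact that each block $\mathcal{X}_n(b)$ already has full unipotent radical by Proposition~\ref{proposition: dimension}.

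First, set $\mathcal{Y} := \bigoplus_{n \in \mathcal{S}} \mathcal{X}_\rho^{\otimes n}$, so that $\mathcal{X}(b)$ is an iterated extension of trivial objects by $\mathcal{Y}$. By the general formalism recalled in Section~\ref{sec:Hardouin}, we have a semi-direct product decomposition $\Gamma_{\mathcal{X}(b)} = U(b) \rtimes \Gamma_\mathcal{Y}$, where $\Gamma_\mathcal{Y}$ is a subtorus of $\prod_{n \in \mathcal{S}} \Gamma_{\mathcal{X}_\rho^{\otimes n}} = \prod_{n \in \mathcal{S}} \mathrm{Res}_{K/\F_q[t]} \mathbb{G}_{m,K}$ by Proposition~\ref{proposition: Galois group}. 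The Levi $\Gamma_\mathcal{Y}$ lifts into $\Gamma_{\mathcal{X}(b)}$ and acts on $U(b)$ by conjugation, and $U(b)$ is contained in $U = \bigoplus_{n \in \mathcal{S}} U_n(b)$ with $U_n(b) = \mathbb{G}_a^2$ by Proposition~\ref{proposition: dimension}.

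Second, I will compute weights for a well-chosen central $\mathbb{G}_m$. Consider the diagonal $\mathbb{G}_m$ embedded in $\prod_{n} \mathrm{Res}_{K/\F_q[t]} \mathbb{G}_{m,K}$ through the central $\mathbb{G}_m \subset \mathrm{Res}_{K/\F_q[t]} \mathbb{G}_{m,K}$ in each factor; since this lands in $\Gamma_\mathcal{Y}$, it acts on $U(b)$ by conjugation. Under the identification $U_n(b) \cong H_{\mathrm{Betti}}(\mathcal{X}_\rho^{\otimes n})$ of unipotent radicals with Betti realizations (coming from $\mathrm{Ext}^1$-theory), this $\mathbb{G}_m$ acts on $U_n(b)$ by the character of weight $n$, because $\mathcal{X}_\rho^{\otimes n}$ is the $n$-fold tensor power (over $K$) of $\mathcal{X}_\rho$ on which the diagonal $\mathbb{G}_m$ acts by weight $1$. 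Since the integers $n \in \mathcal{S}$ are pairwise distinct, every $\mathbb{G}_m$-stable subgroup of $U$ decomposes as the direct sum of its intersections with the weight spaces $U_n(b)$. In particular,
\[
U(b) = \bigoplus_{n \in \mathcal{S}} \bigl( U(b) \cap U_n(b) \bigr).
\]

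Third, I will show each summand is full. For each $n \in \mathcal{S}$, the projection $\mathcal{X}(b) \twoheadrightarrow \mathcal{X}_n(b)$ coming from the direct-sum decomposition induces, by Tannakian duality, a surjection $\Gamma_{\mathcal{X}(b)} \twoheadrightarrow \Gamma_{\mathcal{X}_n(b)}$ which restricts to a surjection on unipotent radicals $U(b) \twoheadrightarrow U_n(b)$; by Proposition~\ref{proposition: dimension}, the target is the full $\mathbb{G}_a^2$. Under the embeddings above this surjection is the coordinate projection $U \to U_n(b)$, which annihilates every weight space $U_m(b)$ with $m \ne n$. Hence $U(b) \cap U_n(b)$ already surjects onto $U_n(b)$ and therefore equals $U_n(b)$. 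Combined with the weight decomposition, this yields $U(b) = \bigoplus_{n \in \mathcal{S}} U_n(b) = U$.

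The main subtlety I expect is verifying the weight calculation and the compatibility between the Tannakian-Galois surjection $U(b) \twoheadrightarrow U_n(b)$ and the coordinate projection of $U$ onto its $n$-th factor; both should follow by unpacking the block-diagonal structure of $\Phi(b)$ and $\Psi(b)$ together with the fact that the simple motives $\mathcal{X}_\rho^{\otimes n}$ are pairwise non-isomorphic (so $\mathrm{Hom}_\mathcal{T}(\mathcal{X}_\rho^{\otimes m}, \mathcal{X}_\rho^{\otimes n}) = 0$ for $m \ne n$), which forces $\Gamma_\mathcal{Y}$ to be a product-compatible subtorus.
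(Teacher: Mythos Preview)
Your approach shares the paper's core ingredients—the conjugation action of a central $\mathbb{G}_m$ on $U(b)$ with weight $n$ on the block indexed by $n$, together with the Tannakian surjections $U(b)\twoheadrightarrow U_n(b)$—but you organize them as a direct weight decomposition of $U(b)$, whereas the paper follows Chang--Yu's contradiction argument via a cleverly chosen coordinate complement $W_{(J)}$ and then invokes \cite[Lemma~4.7]{CY07}. Your route is more transparent \emph{provided} the decomposition $U(b)=\bigoplus_{n\in\mathcal S}(U(b)\cap U_n)$ is established; once that holds, the surjectivity of each coordinate projection immediately forces $U(b)\cap U_n=U_n$.

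The gap is precisely in that decomposition step. You justify it by ``the integers $n\in\mathcal S$ are pairwise distinct,'' but in characteristic $p$ this is not enough: a smooth connected $\mathbb{G}_m$-stable closed subgroup of a vector group need not be a subrepresentation. For instance, with weights $1$ and $p$ the graph $\{(x,x^p)\}\subset\mathbb{G}_a^2$ is $\mathbb{G}_m$-stable yet does not split. What rules this out here is the hypothesis $p\nmid n$ for all $n\in\mathcal S$, which you never invoke. With that hypothesis one checks (exactly as in \cite[Lemma~4.7]{CY07}) that any $\mathbb{G}_m$-equivariant additive morphism from a weight-$m$ space into a weight-$n$ space with $m\neq n$ and $p\nmid n$ is zero; this is what forces the splitting. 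The paper's contradiction argument is set up precisely so that this lemma can be applied to a map out of the coordinate subspace $W_{(J)}$, which tautologically decomposes into weight pieces, thereby sidestepping the question of whether $U(b)$ itself is linear. If you want to keep your direct argument, you must supply an argument that $U(b)$ is a linear subspace of $U$ (equivalently, that it splits along weights); this requires the $p\nmid n$ condition and is essentially the content of the lemma the paper cites.
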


\begin{proof}
In fact, the strategy of Chang-Yu (see \cite{CY07}, Section 4.3) based on a weight argument indeed carries over without much modification.  For completeness, we sketch a proof of this Proposition.

We introduce a $\mathbb G_{m,\Fq(t)}$-action on $U(b)$ and on the direct sum of unipotent groups 
	\[ U=\bigoplus_{n \in \mathcal S}  \begin{pmatrix}
\textrm{Id}_2 & 0\\
* &  1
\end{pmatrix}.\] 
On the matrix indexed by $n \in \mathcal S$, it is defined by 
	\[ a \cdot \begin{pmatrix}
\text{Id}_2 & 0\\
\mathbf u &  1
\end{pmatrix} \mapsto \begin{pmatrix}
\text{Id}_2 & 0\\
a^n \mathbf u &  1
\end{pmatrix}, \quad a \in \mathbb G_{m,\Fq(t)}. \]
Note that this action on $U(b)$ agrees with the conjugation of $\mathbb G_{m,\Fq(t)}$ on $U(b)$.

For each $n \in \mathcal S$, we recall that 
	\[ \Gamma_{\mathcal{X}_n(b)}=\begin{pmatrix}
\textrm{Res}_{K/\Fq[t]} \mathbb{G}_{m,K} & 0\\
* &  1
\end{pmatrix}.\]
We denote by $U_n(b)$ the unipotent part of this Galois group. Thus
	\[ U_n(b)=\begin{pmatrix}
\text{Id}_2 & 0\\
* &  1
\end{pmatrix}\]
and we have a short exact sequence 
	\[ 1 \rightarrow U_n(b) \rightarrow \Gamma_{\mathcal{X}_n(b)} \rightarrow \textrm{Res}_{K/\Fq[t]} \mathbb{G}_{m,K} \rightarrow 1. \]
Since $\mathcal{X}_n(b)$ is contained in $\mathcal{X}(b)$, by Tannakian duality, we obtain a commutative diagram
\begin{align*}
\begin{CD}
1 @>>> U(b) @>>> \Gamma_{\mathcal{X}(b)} @>>> \textrm{Res}_{K/\Fq[t]} \mathbb{G}_{m,K} @>>> 1 \\
& & @V{\varphi_n}VV @V{\varphi_n}VV @V{\chi_n}VV \\
1 @>>> U_n(b) @>>> \Gamma_{\mathcal{X}_n(b)} @>>> \textrm{Res}_{K/\Fq[t]} \mathbb{G}_{m,K} @>>> 1.
\end{CD}
\end{align*}
Here the middle vertical arrow is surjective by Tannakian duality and the map $\chi_n$ is the character $a \mapsto a^n$. We deduce that the induced map $U(b) \rightarrow U_n(b)$ is also surjective.

We suppose now that $U(b)$ is of codimension $r>0$ in $U$. We identify $U$ with the product 
	\[ U \simeq \prod_{n \in \mathcal S} \mathbb G_{a,\Fq(t)}^2. \]
Chang and Yu proved that there exist an integer $n \in \mathcal S$ and a set $J$ of $r$ double indices $ij$ with $i \in \mathcal S$ and $j \in \{1,2\}$ such that if we denote by $W_{(J)}$ the linear subspace of $U$ of codimension $r$ consisting of points $(x_{ij})$ satisfying $x_{ij}=0$ whenever $ij \in J$, then $W_{(J)} \cap U_n(b) \subsetneq U_n(b)$ and the composed map 
	\[ f_n:W_{(J)} \hookrightarrow U(b) \overset{\varphi_n}{\longrightarrow} U_n(b) \]
is surjective.

Recall that for $k \in \mathcal S$, the action of $\mathbb G_{m,\Fq(t)}$ on $U_k(b)$ is of weight $k$. Since $p \nmid n$, by \cite{CY07}, Lemma 4.7, $f_n$ maps $W_{(J)} \cap U_k(b)$ to zero for all $k \neq n$ in $\mathcal S$. Thus it maps $W_{(J)} \cap U_n(b)$ onto $U_n(b)$ which has strictly greater dimension. We obtain a contradiction. 

As a consequence, we get $U(b)=U$ as required. The proof is complete.
\end{proof}

\subsection{Algebraic relations among Anderson's zeta values} ${}$\par

As an immediate consequence of Proposition \ref{prop: unipotent direct sum}, we see that the radical unipotent of $\Gamma_{\mathcal{X}(b)}$ is what we expect and thus of dimension $2|\mathcal S|$. By Theorem \ref{theorem: Papanikolas}, we deduce the following theorem.

\begin{theorem} \label{theorem:main1}
Let $b \in B$. Then the elements of the following set
\begin{align*}
\{\pi_\rho\} \cup \{\zeta_\rho(b,n), 1 \leq n \leq m \text{ such that } p \nmid n \text{ and } (q-1) \nmid n \}.
\end{align*} 
are algebraically independent over $\overline K$.
\end{theorem}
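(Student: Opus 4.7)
The strategy is to apply Papanikolas's Theorem \ref{theorem: Papanikolas} to the direct-sum $t$-motive $\mathcal X(b) = \bigoplus_{n \in \mathcal S} \mathcal X_n(b)$ and read off algebraic independence from the dimension of its motivic Galois group. First, by Proposition \ref{prop: unipotent direct sum}, the unipotent radical of $\Gamma_{\mathcal X(b)}$ equals the full group $\bigoplus_{n \in \mathcal S} \mathbb G_a^2$ of dimension $2|\mathcal S|$; combined with the fact that the quotient torus is $\textrm{Res}_{K/\Fq[t]}\mathbb G_{m,K}$ of dimension $2$, this yields $\dim \Gamma_{\mathcal X(b)} = 2 + 2|\mathcal S|$. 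Papanikolas's theorem then produces
\[
\text{tr.deg}_{\overline K}\,\overline K(\Psi(b)(\theta)) = 2 + 2|\mathcal S|.
\]

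Next, using the period computations of Sections \ref{sec: period calculations} and \ref{sec: period calculations HJ} together with Theorem \ref{theorem: Anderson-Thakur}, I would exhibit explicit entries of $\Psi(b)(\theta)$ in this field: for each $n \in \mathcal S$, the $\rho\on$-block of $\Psi(b)$ contributes two ``period-type'' entries, one yielding $\pi_\rho^n$ (hence $\pi_\rho$ up to an algebraic extension) and one quasi-period companion coming from the $d[\eta]$-direction; and the extension block contributes two ``log-type'' entries, one yielding the Anderson--Thakur bottom coordinate $C_n \zeta_\rho(b,n)$ and one quasi-logarithm companion coming from the second coordinate of $E_\bu\on$. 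Globally this produces $2 + 2|\mathcal S|$ designated transcendentals in $\overline K(\Psi(b)(\theta))$. Since their cardinality equals the transcendence degree, they must form a transcendence basis; in particular, the subcollection $\{\pi_\rho\} \cup \{\zeta_\rho(b,n) : n \in \mathcal S\}$ of size $1 + |\mathcal S|$ is algebraically independent over $\overline K$.

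The principal obstacle is verifying that the designated $2 + 2|\mathcal S|$ entries genuinely account for the full transcendence budget, i.e. that no hidden algebraic relation collapses the count by merging a target with its companion. A clean way to handle this is by induction on $|\mathcal S|$. The base case $\mathcal S = \emptyset$ reduces to the classical transcendence of $\pi_\rho$ over $\overline K$. For the inductive step, passing from $\mathcal S \setminus \{n_k\}$ to $\mathcal S$, Papanikolas's theorem forces the transcendence degree of the period field to jump by exactly $2$. Because the periods of $\rho^{\otimes n_k}$ already lie, up to algebraic extension, in the previous period field -- all tensor powers of $\rho$ share the motivic Galois group $\textrm{Res}_{K/\Fq[t]}\mathbb G_{m,K}$ and hence the same transcendence degree $2$ over $\overline K$ -- the only genuinely new transcendental contributions are $\zeta_\rho(b,n_k)$ and its log companion. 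If $\zeta_\rho(b,n_k)$ were algebraic over the previous target field, a single companion element could not absorb a jump of $2$ in transcendence degree; hence $\zeta_\rho(b,n_k)$ is transcendental over $\overline K(\pi_\rho, \zeta_\rho(b,n_1), \ldots, \zeta_\rho(b,n_{k-1}))$, and the induction closes.
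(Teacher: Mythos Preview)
Your approach is the paper's: invoke Proposition~\ref{prop: unipotent direct sum} to get $\dim\Gamma_{\mathcal X(b)}=2+2|\mathcal S|$, then apply Theorem~\ref{theorem: Papanikolas}. The paper's proof is literally those two steps with no further detail; your inductive extraction of the specific elements $\pi_\rho,\zeta_\rho(b,n)$ from $\overline K(\Psi(b)(\theta))$ makes explicit what the paper leaves implicit, and is correct in substance.

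One logical slip in the inductive step: you assume for contradiction that $\zeta_\rho(b,n_k)$ is algebraic over the \emph{target} field $T_{k-1}=\overline K(\pi_\rho,\zeta_\rho(b,n_1),\ldots,\zeta_\rho(b,n_{k-1}))$, but the transcendence-degree jump of $2$ is measured over the full period field $F_{k-1}=\overline K(\Psi_{\mathcal S\setminus\{n_k\}}(b)(\theta))$, and $T_{k-1}\subsetneq F_{k-1}$ in general. Algebraicity over the smaller field does not by itself obstruct the companion from absorbing the jump over the larger one. The repair is immediate and actually strengthens your conclusion: since $F_k$ is generated over an algebraic extension of $F_{k-1}$ by exactly the two log-type entries (the period block of $\rho^{\otimes n_k}$ being algebraic over $F_{k-1}$, as you note), the jump of $2$ forces $\zeta_\rho(b,n_k)$ to be transcendental over $F_{k-1}$; a fortiori over $T_{k-1}$. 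Also, your base case $\mathcal S=\varnothing$ is vacuous as stated (the direct sum is empty and $\pi_\rho$ does not appear); start the induction at $|\mathcal S|=1$, where Proposition~\ref{proposition: dimension} and the generator count for $\overline K(\Psi_n(b)(\theta))$ give the result directly.
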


We present a slight generalization of the above theorem by taking account of the $p$-power relations. Let $\{b_1,\ldots,b_h\}$ be a $K$-basis of $H$ with $b_i \in B$. Since the extension $H/K$ is separable, it follows that for any $b \in B$, we can write $b=a_1 b_1^{p^m} + \ldots + a_h b_h^{p^m}$ with $a_1,\ldots,a_h \in K$. Thus we get
\begin{align*}
\zeta_\rho(b,p^m n) =\sum_{I \subseteq A} \frac{\sigma_I(b)}{u_I^{p^m n}} =\sum_{i=1}^h a_i \left( \sum_{I \subseteq A} \frac{\sigma_I(b_i)}{u_I^n} \right)^{p^m} =\sum_{i=1}^h a_i \zeta_\rho(b_i,n)^{p^m}.
\end{align*}

\begin{theorem} \label{theorem:main2}
Let $\{b_1,\ldots,b_h\}$ be a $K$-basis of $H$ with $b_i \in B$. We consider the following set
\begin{align*}
\mathcal A=\{\pi_\rho\} \cup \{\zeta_\rho(b_i,n): 1 \leq i \leq h, 1 \leq n \leq m \text{ such that } q-1 \nmid n \text{ and } p \nmid n \}.
\end{align*}
Then the elements of $\mathcal A$ are algebraically independent over $\overline K$.
\end{theorem}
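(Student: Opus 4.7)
The plan is to extend the argument for Theorem \ref{theorem:main1} from a single element $b \in B$ to the full $K$-basis $\{b_1, \ldots, b_h\}$. I form the direct sum $t$-motive
\[
\mathcal Y := \bigoplus_{n \in \mathcal S} \bigoplus_{i=1}^h \mathcal X_n(b_i),
\]
and compute $\dim \Gamma_{\mathcal Y}$ in two stages before invoking Theorem \ref{theorem: Papanikolas}.

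\emph{Stage 1 (per-weight Hardouin).} Fix $n \in \mathcal S$. Theorem \ref{T:LinearIndependence}, whose hypothesis $q-1 \nmid n$ is built into the definition of $\mathcal S$, yields that $\pi_\rho^n, \zeta_\rho(b_1, n), \ldots, \zeta_\rho(b_h, n)$ are linearly independent over $\overline K$ and hence over $K$. By Theorem \ref{theorem: Anderson-Thakur}, the last coordinate of the log-algebraic vector $Z_n(b_i)$ is $C_n \zeta_\rho(b_i, n)$, so Proposition \ref{proposition: Ext} delivers $K$-linear independence of the extension classes $[\mathcal X_n(b_i)] \in \mathrm{Ext}^1_{\mathcal T}(\mathbf 1, \mathcal X_\rho^{\otimes n})$. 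Since $\mathrm{End}_{\mathcal T}(\mathcal X_\rho^{\otimes n}) = A \subset K$ by Proposition \ref{prop: simple modules}, these classes are also $A$-linearly independent. Verifying the Hardouin hypotheses exactly as in Proposition \ref{proposition: dimension}, Corollary \ref{cor:Hardouin} produces the unipotent radical of $\Gamma_{\bigoplus_i \mathcal X_n(b_i)}$ as $\omega(\mathcal X_\rho^{\otimes n})^h$, of dimension $2h$.

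\emph{Stage 2 (weight separation).} I repeat the Chang--Yu weight argument of Proposition \ref{prop: unipotent direct sum} verbatim, with the per-weight unipotent $U_n(b)$ replaced by the dimension-$2h$ block $\omega(\mathcal X_\rho^{\otimes n})^h$ produced in Stage 1. The argument depends only on the central $\mathbb G_m$-action, which acts with weight $n$ on the entire $n$th block independently of the internal index $i$; the condition $p \nmid n$ then allows Chang--Yu's Lemma~4.7 to force the unipotent radical of $\Gamma_{\mathcal Y}$ to split as the direct sum of these blocks, giving total unipotent dimension $2h|\mathcal S|$. Thus $\dim \Gamma_{\mathcal Y} = 2 + 2h|\mathcal S|$, and Theorem \ref{theorem: Papanikolas} together with the period identifications of Sections \ref{sec: period calculations}--\ref{sec: period calculations HJ} yields the algebraic independence over $\overline K$ of the $1 + h|\mathcal S|$ elements of $\mathcal A$.

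The principal obstacle is Stage~1: one must assemble Theorem \ref{T:LinearIndependence} with Proposition \ref{proposition: Ext} to upgrade $K$-linear independence of the $\zeta_\rho(b_i, n)$ to $A$-linear independence of the corresponding extension classes, so that Hardouin's criterion applies uniformly across all $h$ basis elements at a given weight $n$. Once this is in hand, Stage~2 is purely formal, and the extraction of algebraic independence from $\dim \Gamma_{\mathcal Y}$ reuses the same period calculations already carried out in the single-$b$ case.
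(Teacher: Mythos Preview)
Your proposal is correct and is precisely the argument the paper has in mind: the paper's own proof of Theorem~\ref{theorem:main2} consists of the single sentence ``The proof of Theorem~\ref{theorem:main2} follows identically to that of Theorem~\ref{theorem:main1},'' and your two-stage outline (per-weight Hardouin via Corollary~\ref{cor:Hardouin} in place of the single-extension Theorem~\ref{theorem:Hardouin}, followed by the Chang--Yu weight separation of Proposition~\ref{prop: unipotent direct sum} with $2h$-dimensional blocks replacing $2$-dimensional ones) is exactly how that identical proof unfolds when written out.
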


\begin{proof}
The proof of Theorem \ref{theorem:main2} follows identically to that of Theorem \ref{theorem:main1}.
\end{proof}


\section{Algebraic relations among Goss's zeta values} \label{sec: Goss zeta values}

In this Section, we investigate algebraic relations among Goss's zeta values. This Section owes its very existence to B. Anglès. In particular, the proofs of Proposition \ref{prop: Goss} and Corollary \ref{cor: Goss zeta values} are due to him. For more details about the theory of $L$ series and Goss's zeta values, we refer the interested reader to \cite{Gos96}, Section 8. 

\subsection{Goss's map} ${}$\par

We set $\pi:=t/y$ which is a uniformizer  of $K_\infty$. Set $\pi_1=\pi$, and for $n\geq 2$, choose $\pi_n\in \overline{K}_\infty^\times$ such that $\pi_n^n=\pi_{n-1}$. If $z\in \mathbb Q$, $z=\frac{m}{n!}$ for some $m\in \mathbb Z, n\geq 1$, we set
	\[ \pi^z := \pi_n^m. \]
Let $\overline{\mathbb F}_q$ be the algebraic closure of $\mathbb F_q$ in $\overline{K}_\infty$, and let 
	\[ U_\infty:=\left\{ x\in \overline{K}_\infty, v_\infty(x-1)>0\right\}. \] 
Then $\overline{K}_\infty^\times= \pi^{\mathbb Q}\times \overline{\mathbb F}_q^\times\times U_\infty$. Therefore, if $x\in \overline{K}_\infty^\times$, one can write in a unique way:
 \[ x=\pi^{v_\infty(x)} \sgn (x) \langle x \rangle, \quad \sgn(x)\in \overline{\mathbb F}_q^\times, \langle x \rangle \in U_\infty. \]

Let $I\in \mathcal I(A)$, then there exists an integer $h \geq 1$ such that $I^h=xA, $ $x\in K^\times$. We set $\langle I \rangle:= \langle x \rangle^{\frac{1}{h}}\in U_\infty$. Then one shows (see \cite{Gos96}, Section 8.2) that the map called Goss's map
\begin{align*}
[\cdot]_A: \mathcal I(A) &\rightarrow \overline{K}_\infty^\times \\
I &\mapsto \langle I \rangle\pi^{-\frac{\deg I}{d_\infty}}
\end{align*}
is a group homomorphism such that
	\[ \forall x\in K^\times, \quad [xA]_A=\frac{x}{\sgn(x)}. \]
Observe that for all $I\in \mathcal I(A)$, we have $\sgn \left([I]_A\right)=1$.

Let $E/K$ be a finite extension, and let $O_E$ be the integral closure of $A$ in $E$. Let $\mathcal I(O_E)$ be the group of non-zero fractional ideals of $O_E$. We denote by $N_{E/K}:  \mathcal I(O_E)\rightarrow \mathcal I(A)$ the group homomorphism such that, if $\frak P$ is a maximal ideal of $O_E$ and $P=\frak P\cap A$, we have
	\[ N_{E/K}(\frak P)=P^{\left[\frac{O_E}{\frak P}: \frac{A}{P}\right]}. \]
Note that if $\frak P=xO_E, x\in E^\times$, then $N_{E/K} (\frak P) =N_{E/K} (x)A$ where $N_{E/K}: E\rightarrow K$ also denotes the usual norm map.

\subsection{Goss's zeta functions and Goss's zeta values}\label{Z}${}$ \par

We recall the definition of Goss's zeta functions introduced in \cite{Gos96}, Chapter 8. Let $\mathbb S_\infty=\mathbb C_\infty^\times \times \mathbb Z_p$ be the Goss ``complex plane". The group action of $\mathbb S_\infty$ is written additively. Let $I\in \mathcal I(A)$ and $s=(x;y)\in \mathbb S_\infty$, we set
	\[ I^s:= \langle I \rangle^yx^{\deg I}\in \mathbb C_\infty^\times. \]
We have a natural injective group homomorphism: $\mathbb Z\rightarrow \mathbb S_\infty, j\mapsto s_j=\left(\pi^{-\frac{j}{d_\infty}},j\right)$. Observe that $I^{s_j}=[I]^j_A$.

Let $E/K$ be a finite extension, and let $O_E$ be the integral closure of $A$ in $E$. Let $\frak I$ be a non-zero ideal of $E$. We have
	\[ \forall j\in \mathbb Z, \quad N_{E/K}(\frak I)^{s_j}=\left[\frac{O_E}{\frak I}\right]_A^j. \]
Letting $s\in \mathbb S_\infty$,  the following sum converges in $\mathbb C_\infty$ (see \cite{Gos96}, Theorem 8.9.2):
	\[ \zeta_{O_E}(s):=\sum_{d \geq 0} \sum_{\substack{\frak I\in \mathcal I(O_E), \frak I\subset O_E, \\ \deg(N_{E/K}(\frak I))=d}} N_{E/K}(\frak I)^{-s}. \]
The function $\zeta_{O_E}:\mathbb S_\infty\rightarrow \mathbb C_\infty$ is called the \emph{zeta function attached to $O_E$ and $[\cdot]_A$.} Observe that
	\[ \forall j\in \mathbb Z, \quad \zeta_{O_E}(j):= \zeta_{O_E}(s_j)=\sum_{d\geq 0} \sum_{\substack{\frak I\in \mathcal I(O_E), \frak I\subset O_E,\\ \deg(N_{E/K}(\frak I))=d}} \left[\frac{O_E}{\frak I}\right]_A^{-j}. \]
In particular,
\[ \zeta_{O_E}(1)=\prod_{\frak P} \left(1-\frac{1}{\left[\frac{O_E}{\frak P}\right]_A}\right)^{-1}\in \overline{K}_\infty^\times, \]
where $\frak P$ runs through the maximal ideals of $O_E$.

Recall that $\rho: A\rightarrow B\{\tau\}$ is the sign-normalized rank one Drinfeld module given in Section \ref{S:Review of Tensor Powers}. By \cite{ANDTR17a}, Proposition 2.1, the following product converges to an element in $U_\infty \cap K_\infty^\times$:
	\[ L_A(\rho/O_E):= \prod_{\frak P} \frac{[\text{Fitt}_A(O_E/\frak P]_A}{[\text{Fitt}_A(\rho(O_E/\frak P))]_A}\]
where $\frak P$ runs through the maximal ideals of $O_E$.

We have the following crucial fact (see \cite{ANDTR17a}, Proposition 3.4) which provides a deep connection between the special $L$-values and the Goss's zeta value at $1$.

\begin{proposition}
Let $E/K$ be a finite extension such that $H\subset E$. Then
	\[ L_A(\rho/O_E)=\zeta_{O_E}(1). \]
\end{proposition}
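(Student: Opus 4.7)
The plan is to match the two sides of the claimed identity as Euler products over the maximal ideals of $O_E$, and then compare local factors one at a time. First I would expand $\zeta_{O_E}(1)$ using unique factorization in $\mathcal I(O_E)$ together with the multiplicativity of $N_{E/K}$ and of Goss's map $[\cdot]_A$, to obtain
\[ \zeta_{O_E}(1) = \prod_{\frak P} \bigl(1 - [O_E/\frak P]_A^{-1}\bigr)^{-1}, \]
with absolute convergence controlled by $v_\infty\bigl([O_E/\frak P]_A\bigr) = -\deg N_{E/K}(\frak P)/d_\infty$. Since $L_A(\rho/O_E)$ is, by definition, already a product indexed by $\frak P$, the proposition reduces to the local identity
\[ \frac{[\text{Fitt}_A(O_E/\frak P)]_A}{[\text{Fitt}_A(\rho(O_E/\frak P))]_A} = \bigl(1 - [O_E/\frak P]_A^{-1}\bigr)^{-1} \]
for every maximal ideal $\frak P$ of $O_E$.

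The numerator is a routine Fitting ideal computation: setting $P = \frak P \cap A$ and $f = [O_E/\frak P : A/P]$, the quotient $O_E/\frak P$ is free of rank $f$ over $A/P$, so $\text{Fitt}_A(O_E/\frak P) = P^f = N_{E/K}(\frak P)$, and applying Goss's map yields $[\text{Fitt}_A(O_E/\frak P)]_A = [O_E/\frak P]_A$. For the denominator, I would analyze $k := O_E/\frak P$ endowed with the $A$-module structure $\rho(k) := \rho(O_E/\frak P)$ arising from the reduction $\overline\rho \colon A \to k\{\tau\}$ of $\rho$. Because $\rho$ has rank one and the hypothesis $H \subset E$ ensures good reduction at $\frak P$, the theory of sign-normalized Drinfeld--Hayes modules identifies the geometric Frobenius $\tau^{\deg \frak P}$ of $k$ with the $\overline\rho$-action of a distinguished monic element $a_{\frak P} \in A$ of degree $\deg \frak P$ satisfying $[a_{\frak P} A]_A = [O_E/\frak P]_A$. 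Since $\tau^{\deg \frak P}$ acts trivially on $k$, the element $a_{\frak P}-1$ annihilates $\rho(k)$; a cardinality count combined with the cyclicity of rank one Drinfeld module reductions then upgrades this to an isomorphism $\rho(k) \cong A/(a_{\frak P}-1)$, whence $[\text{Fitt}_A(\rho(O_E/\frak P))]_A = [O_E/\frak P]_A - 1$, and the local factor collapses to the required form.

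The main obstacle is the cyclicity assertion together with the precise identification of $a_{\frak P}$: this step encodes the Hayes--Drinfeld analogue of the Shimura--Taniyama relation and is delicate with respect to the sign normalization built into $\rho$. I would begin with the case $E = H$, where the Hayes map $\psi$ from Section~\ref{S:And-Thak Theorem Gen} furnishes an explicit generator and the required compatibilities between $\psi$ and $[\cdot]_A$ follow from the listed properties $\psi(IJ) = \sigma_J(\psi(I))\,\psi(J)$, $IB = \psi(I)B$, and $\psi(xA) = x/\sgn(x)$; the general case then follows by tracking norms along $E/H$. As the excerpt itself signals, the most efficient route is simply to invoke Proposition~3.4 of \cite{ANDTR17a}, where precisely this local computation is carried out in full.
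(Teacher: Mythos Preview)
The paper does not prove this proposition at all: it is stated with the prefatory remark ``We have the following crucial fact (see \cite{ANDTR17a}, Proposition 3.4)'' and nothing more. Your proposal ends at exactly the same place, invoking Proposition~3.4 of \cite{ANDTR17a}, so in that sense you and the paper agree completely.

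What you have added beyond the paper is a sketch of the argument behind that cited proposition: expand both sides as Euler products over maximal ideals $\frak P$ of $O_E$, identify the numerator via $\text{Fitt}_A(O_E/\frak P)=N_{E/K}(\frak P)$, and compute the denominator by showing $\rho(O_E/\frak P)\cong A/(a_{\frak P}-1)$ for a suitable $a_{\frak P}$ coming from the Hayes theory. This is indeed the shape of the proof in \cite{ANDTR17a}, and you correctly flag the delicate step (the precise identification of the Frobenius element and the cyclicity of the reduction) as the place where the hypothesis $H\subset E$ and the sign normalization enter. One small point: your formula $[\text{Fitt}_A(\rho(O_E/\frak P))]_A=[O_E/\frak P]_A-1$ is written suggestively but should be understood as $[(a_{\frak P}-1)A]_A$, where $a_{\frak P}-1$ is already sign-one; the passage from ideals to elements via $[\cdot]_A$ is what makes the subtraction literal.
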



\subsection{Relations with Anderson's zeta values} ${}$ \par

Let $z$ be an indeterminate over $K_\infty$, and recall that $\mathbb T_z(K_\infty)$ denotes the Tate algebra in the variable $z$ with coefficients in $K_\infty$. Recall that
	\[ H_\infty=H\otimes_KK_\infty, \]
	\[ \mathbb T_z(H_\infty)= H\otimes_K\mathbb T_z(K_\infty). \]
For $n\in \mathbb Z$, we set
	\[ Z_{B}(n;z)=\sum_{d \geq 0} \sum_{\substack{\frak I\in \mathcal I(B), \frak I\subset B,\\ \deg(N_{E/K}(\frak I))=d}} \left[\frac{O_E}{\frak I}\right]_A^{-n} z^d. \]
Then, by \cite{Gos96}, Theorem 8.9.2,  for all $n\in \mathbb Z, $ $Z_B(n;.)$ defines an entire function on $\mathbb C_\infty$, and
	\[ \forall n\in \mathbb N, \quad Z_B(-n; z)\in A[z]. \]
Observe that
	\[ \forall n\in \mathbb Z, \quad Z_B(n; z)\in \mathbb T_z(K_\infty), \]
and
	\[ \forall n\geq 1, \quad Z_B(n; z)=\prod_{\frak P} \left(1-\frac {z^{\deg(N_{H/K}(\frak P))}}{\left[\frac{O_E}{\frak P}\right]^n_A}\right)^{-1}\in \mathbb T_z(K_\infty)^\times. \]
Finally, we note that
	\[ Z_B(n; 1)=\zeta_B(n). \]

Recall that $G={\rm Gal}(H/K)$. Then $G\simeq {\rm Gal}(H(z)/K(z))$ acts on $\mathbb T_z(H_\infty)$. We denote by $\mathbb T_z(H_\infty)[G]$ the non-commutative group ring where the commutation rule is given by
\[ \forall h,h'\in \mathbb T_z(H_\infty), \forall g,g'\in G, \quad hg.h'g'= hg(h') gg'. \]

Let $n\in \mathbb Z$. One can show (see \cite{ANDTR17a}, Lemma 3.5) that the following infinite sum converges in $\mathbb T_z({H}_\infty)[G]$:
	\[ \mathcal L(\rho/B; n; z):=\sum_{d \geq 0} \sum_{\substack{I\in \mathcal I(A) , I\subset A,\\ \deg I=d}} \frac{z^{\deg I}}{\psi(I)^n} \sigma_I. \]
Furthermore, for all $n\geq 1$, we have
	\[ \mathcal L(\rho/B; n; z)=\prod_{P} \left(1-\frac{z^{\deg P}}{\psi(P)^n}\sigma_P\right)^{-1}\in \left(\mathbb T_z(H_\infty)[G]\right)^\times \]
and for all $n\leq 0$,
	\[ \mathcal L(\rho/B; n; z)\in B[z][G]. \]
Note that 
	\[ \mathcal \zeta_\rho(.,n)=\mathcal L(\rho/B; n;1)\in (H_\infty[G])^\times. \]
We observe that $\mathcal L(\rho/B; n; z)$ induces  a $\mathbb T_z(K_\infty)$-linear map $\mathcal L(\rho/B; n; z): \mathbb T_z(H_\infty)\rightarrow \mathbb T_z(H_\infty)$. Since $\mathbb T_z(H_\infty)$ is a free $\mathbb T_z(K_\infty)$-module of rank $[H:K]$ (recall that $\mathbb T_z(K_\infty)$ is a principal ideal domain), $\det_{\mathbb T_z(K_\infty)}\mathcal L(\rho/B; n; z)$ is well-defined.
We also observe that $\zeta_\rho(.,n)$ induces a $K_\infty$-linear map $\zeta_\rho(.,n): H_\infty\rightarrow H_\infty$, and we denote by $\det_{K_\infty} \zeta_\rho(.,n)$ its determinant. Recall that $\ev: \mathbb T_z(H_\infty)\rightarrow H_\infty$ is the $H_\infty$-linear map given by
	\[ \forall f\in \mathbb T_z(H_\infty), \quad \ev(f)=f\mid_{z=1}. \]
Observe that, if $\{e_1, \ldots, e_h\}$ is a $K$-basis of $H/K$ (recall that $h=[H:K]$), then
	\[ H_\infty= \oplus_{i=1}^h K_\infty e_i, \]
	\[ \mathbb T_z(H_\infty)= \oplus _{i=1}^h \mathbb T_z(K_\infty) e_i. \]
We deduce that
	\[ {\det}_{K_\infty} \zeta_\rho(.,n)=\ev\left({\det}_{\mathbb T_z(K_\infty)}\mathcal L(\rho/B; n; z)\right). \]
By \cite{ANDTR17a}, Theorem 3.6, we have
	\[ {\det}_{\mathbb T_z(K_\infty)}\mathcal L(\rho/B; n; z)= Z_B(n;z). \]
In particular,
	\[ {\det}_{K_\infty} \zeta_\rho(.,n)=\zeta_B(n). \]
	
\subsection{Algebraic relations among Goss's zeta values} ${}$\par

The class number $\text{Cl}(A)$ of $A$ equals to the number of rational points $X(\Fq)$ on the elliptic curve $X$ and also to the degree of extension $[H:K]$. For a prime ideal $\frak p$ of $A$ of degree $1$ corresponding to an $\Fq$-rational point on $X$, we denote by $\mathfrak p_+$ the subset of elements in $\mathfrak p$ of sign $1$ and consider the sum (compare to \cite{GP18}, Section 	6 and \cite{Gre17b}, Section 6):
	\[ \zeta_A(\frak p,n)=\sum_{\substack{a \in \mathfrak{p}^{-1},\\ \sgn(a)=1}}  \frac{1}{a^n}, \quad n \in \mathbb N. \]
We will see that the sums $\zeta_A(\frak p,n)$ where $\frak p$ runs through the set $\mathcal P$ of prime ideals of $A$ of degree 1 are the elementary blocks in the study of Goss's zeta values on elliptic curves. For the rest of this Section, it will be convenient to slightly modify these sums as follows.

\begin{proposition} \label{prop: Goss}
Let $n \in \mathbb N$. For $\sigma \in G=\Gal(H/K)$, we set
	\[ \zeta_A(\sigma,n):= \sum_{d\geq 0} \sum_{\substack{I\in \mathcal I(A), I\subset A,\\ \deg(I)=d, \\ \sigma_I=\sigma}} \frac{1}{[I]^n_A}. \]
Then the elements $\zeta_A(\sigma,n)$ indexed by $\sigma \in G$ are algebraically independent over $\overline K$.
\end{proposition}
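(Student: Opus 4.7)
The plan is to reduce the algebraic independence of the family $\{\zeta_A(\sigma,n)\}_{\sigma \in G}$ to that of Anderson's zeta values $\{\zeta_\rho(b_i,n)\}_{1\leq i\leq h}$ (which is the content of Theorem \ref{theorem:main2}) via an invertible $\overline{K}$-linear change of basis. Throughout I would tacitly assume $q-1\nmid n$ and $p\nmid n$; without these conditions the Euler-type relation $\zeta_\rho(b,n)\in H\pi_\rho^n$ makes each $\zeta_A(\sigma,n)$ an $\overline{K}$-multiple of $\pi_\rho^n$ and the stated algebraic independence cannot hold, so these are the meaningful cases.

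The first step is to compare Goss's map $[\cdot]_A$ and Hayes's normalizer $\psi$, both regarded as group homomorphisms $\mathcal{I}(A)\to \overline{K}_\infty^\times$. They coincide on principal ideals, since $\psi(xA)=x/\sgn(x)=[xA]_A$ for every $x\in K^\times$, so the quotient $\psi(I)/[I]_A$ is a homomorphism trivial on principal ideals. By class field theory it factors through the Artin map $I\mapsto \sigma_I$ and yields a character $\chi\colon G\to \overline{K}^\times$ satisfying $\psi(I)=\chi(\sigma_I)\cdot[I]_A$ for every $I\in \mathcal{I}(A)$. One needs to check that $\chi$ takes values in $\overline{K}$ rather than merely in $\overline{K}_\infty$: this follows from the fact that $\pi=t/y\in K$ (so that $[I]_A$ lies in $\overline{K}$) together with $\psi(I)\in H\subset \overline{K}$.

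Substituting $\psi(I)^n=\chi(\sigma_I)^n[I]_A^n$ into the definition of Anderson's zeta value and regrouping the ideals of $A$ by their Artin symbol yields
\begin{equation*}
\zeta_\rho(b,n)\;=\;\sum_{I\subseteq A}\frac{\sigma_I(b)}{\psi(I)^n}\;=\;\sum_{\sigma\in G}\frac{\sigma(b)}{\chi(\sigma)^n}\,\zeta_A(\sigma,n).
\end{equation*}
Taking a $K$-basis $\{b_1,\dots,b_h\}$ of $H$ with $b_i\in B$ and letting $\sigma$ range over $G$, this is an $h\times h$ linear system expressing the column vector $\bigl(\zeta_\rho(b_i,n)\bigr)_i$ as $M\cdot\bigl(\zeta_A(\sigma,n)\bigr)_\sigma$ with $M=\bigl(\sigma(b_i)\chi(\sigma)^{-n}\bigr)_{\sigma,i}\in \Mat_{h\times h}(\overline{K})$. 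The matrix $M$ is invertible: $\det M$ differs from $\det(\sigma(b_i))_{\sigma,i}$ only by the nonzero scalar $\prod_\sigma \chi(\sigma)^{-n}$, and $\det(\sigma(b_i))\ne 0$ follows from Dedekind's linear independence of characters applied to the distinct embeddings $\sigma\colon H\hookrightarrow \overline{K}$ and the $K$-basis $\{b_i\}$ of $H$.

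Therefore $\bigl(\zeta_A(\sigma,n)\bigr)_\sigma=M^{-1}\bigl(\zeta_\rho(b_i,n)\bigr)_i$ with $M\in \mathrm{GL}_h(\overline{K})$, which means the two families generate the same subfield of $\bC_\infty$ over $\overline{K}$. By Theorem \ref{theorem:main2} (applied with $m=n$) this subfield has transcendence degree $h=|G|$ over $\overline{K}$, and having exactly $h$ elements in an extension of transcendence degree $h$ forces $\bigl(\zeta_A(\sigma,n)\bigr)_\sigma$ to be algebraically independent. The only step that is not purely formal is the construction and $\overline{K}$-rationality of the character $\chi$; everything beyond that is linear-algebraic translation built upon the deep input of Theorem \ref{theorem:main2}.
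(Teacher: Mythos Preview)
Your approach coincides with the paper's: express each Anderson zeta value $\zeta_\rho(b,n)$ as an $\overline K$-linear combination of the $\zeta_A(\sigma,n)$, then use Theorem~\ref{theorem:main2} and a cardinality count to conclude. One small caveat: the map $I\mapsto \psi(I)/[I]_A$ is \emph{not} a group homomorphism on $\mathcal I(A)$, because $\psi$ satisfies the crossed relation $\psi(IJ)=\sigma_J(\psi(I))\,\psi(J)$ rather than $\psi(IJ)=\psi(I)\psi(J)$. What you actually need---and what is true---is merely that this ratio depends only on $\sigma_I$ and lies in $\overline K^\times$; this follows directly by writing any $I'$ with $\sigma_{I'}=\sigma_I$ as $I'=xI$ with $x\in K^\times$ and using that $\psi$ and $[\cdot]_A$ agree on principal ideals (the paper argues this by fixing a degree-one prime $\mathfrak p$ with $\sigma_{\mathfrak p}=\sigma$). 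Since the rest of your argument never uses multiplicativity of $\chi$---the invertibility of $M$ only requires factoring a nonzero scalar out of each column and applying Dedekind---the proof stands once you replace the word ``character'' by ``well-defined function on $G$'' and justify that directly.
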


\begin{proof}
Let $\sigma \in \Gal(H/K)$ and $\frak p$ be the corresponding ideal in $\mathcal P$ such that $\sigma_{\frak p}=\sigma$. We get
	\[ \zeta_A(\sigma,n) = \sum_{\substack{I\in \mathcal I(A), I\subset A,\\ \sigma_I=\sigma}} \frac{1}{[I]^n_A} =\frac{1}{[\frak p]^n_A} \sum_{\substack{a \in \mathfrak{p}^{-1},\\ \sgn(a)=1}} \frac{1}{a^n}=\frac{1}{[\frak p]^n_A} \zeta_A(\frak p,n), \]
and 
	\[ \sum_{\substack{I\in \mathcal I(A), I\subset A,\\ \sigma_I=\sigma}} \frac{1}{\psi(I)^n} =\frac{1}{\psi(\mathfrak{p})^n} \sum_{\substack{a \in \mathfrak{p}^{-1},\\ \sgn(a)=1}} \frac{1}{a^n}=\frac{1}{\psi(\mathfrak{p})^n} \zeta_A(\frak p,n). \]
Thus we obtain
	\[ \zeta_A(\sigma,n) = \frac{\psi(\mathfrak{p})^n}{[\mathfrak{p}]^n_A} \sum_{\substack{I\in \mathcal I(A), I\subset A,\\ \sigma_I=\sigma}} \frac{1}{\psi(I)^n}. \]
Note that $\frac{\psi(\mathfrak{p})^n}{[\mathfrak{p}]^n}$ belongs to $\overline K^\times$. It follows that for $b \in B$, we can express 
	\[ \zeta_\rho(b,n)=\sum_{\sigma \in G} a_\sigma(b) \, \zeta_A(\sigma,n) \]
with some coefficients $a_\sigma(b) \in \overline K$.

By Theorem \ref{theorem:main2}, if $b_1,\ldots,b_h \in B$ is a $K$-basis of $H$, then the elements $\zeta_\rho(b_i,n)$ $(1 \leq i \leq h)$ are algebraically independent over $\overline K$. By the above discussion and the fact that 
	\[ |\, \zeta_\rho(b_i,n), 1 \leq i \leq h \,|=|\, \zeta_A(\sigma,n), \sigma \in G \,|=[H:K], \]
the Proposition follows immediately.
\end{proof}

Let $U$ be the $p$-Sylow subgroup of $G$  where $p$ is the characteristic of $\mathbb F_q$. We set  $\Delta :=G/U={\rm Gal}(F/K)$ where $F=H^{U}$. We write $p^s=|U|$ and set
	\[ \widehat{G}={\rm Hom}(G, \overline{\mathbb F}_q^\times)={\rm Hom}(\Delta, \overline{\mathbb F}_q^\times)\simeq \Delta, \]
with $|\Delta| \in \mathbb Z_p^\times$.

For $\delta \in \Delta,$ we set
	\[ Z(n,\delta)=\sum_{\substack{I\in \mathcal I(A), I\subset A, \\ (I, F/K)=\delta}}\frac{1}{[I]_A^n} \in \overline{K}_\infty. \]
We see easily that
	\[ Z(n,\delta)= \sum_{ \sigma \equiv \delta \pmod{U}} \zeta_A(\sigma,n). \]
By Proposition \ref{prop: Goss}, $Z(n,\delta), \delta \in \Delta$ are algebraically independent over $\overline{K}$.

Let $\chi \in \widehat{G}$ and we consider the value at $1$ of Goss $L$-series attached to $\chi$ given by
	\[ L(n, \chi)=\sum_{\delta\in \Delta}\chi(\delta) Z(n,\delta) = \sum_{I\in \mathcal I(A), I\subset A}\frac{\chi((I,F/K))}{[I]_A^n} \]
where $(.,F/K)$ is the Artin map. It is clear that for all $\delta \in \Delta$,
	\[ Z(n,\delta) = \frac{1}{|\Delta|} \sum_{\chi \in \widehat{G}} \chi(\delta)^{-1} L(n, \chi). \]

The above discussion combined with Theorem \ref{theorem:main2} implies immediately a transcendental result for Goss's zeta values:
\begin{theorem} \label{theorem: Goss zeta values}
Let $m \in \mathbb N, m \geq 1$. Then the special values of Goss $L$-series
\begin{align*}
G_n=\{\pi_\rho\} \cup \{L(n, \chi): \chi \in \widehat{G}, 1 \leq n \leq m \text{ such that } q-1 \nmid n \text{ and } p \nmid n \}.
\end{align*}
are algebraically independent over $\overline K$.
\end{theorem}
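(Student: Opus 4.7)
The plan is to relate the family $G_m$ of $L$-values to the family of Anderson zeta values treated by Theorem \ref{theorem:main2} through a chain of invertible $\overline K$-linear substitutions, so that no new transcendence input beyond Theorem \ref{theorem:main2} is needed. Write $\mathcal S = \{n\in\mathbb N : 1\le n\le m,\ p\nmid n,\ q-1\nmid n\}$. I will produce three successive $\overline K$-vector-space bases of the same subspace of $\bC_\infty$, and show algebraic independence is preserved at each step.

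First, I will upgrade Proposition \ref{prop: Goss} from a single $n$ to the whole range $\mathcal S$. Unwinding the computation given in its proof shows that, for each fixed $n$,
\[ \zeta_\rho(b_i,n) \;=\; \sum_{\sigma\in G}\sigma(b_i)\,\frac{[\mathfrak p_\sigma]_A^n}{\psi(\mathfrak p_\sigma)^n}\,\zeta_A(\sigma,n), \]
so the transition from $\{\zeta_A(\sigma,n)\}_\sigma$ to $\{\zeta_\rho(b_i,n)\}_i$ factors as the product of the diagonal matrix with entries in $\overline K^\times$ and the Galois matrix $(\sigma(b_i))_{\sigma,i}$. The latter is invertible because $\{b_1,\ldots,b_h\}$ is a $K$-basis of the separable Galois extension $H/K$ (Dedekind's independence of characters applied to $\Hom_K(H,\overline K)$). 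Since distinct values of $n\in\mathcal S$ involve disjoint collections of variables and $\pi_\rho$ is common to both sides, assembling these block transitions over all $n\in\mathcal S$ yields a $(1+h|\mathcal S|)\times(1+h|\mathcal S|)$ change of basis, $\overline K$-linear and invertible. Theorem \ref{theorem:main2} then transports algebraic independence to
\[ \{\pi_\rho\}\cup\{\zeta_A(\sigma,n):\sigma\in G,\ n\in\mathcal S\}. \]

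Next, I will pass from $\zeta_A(\sigma,n)$ to $Z(n,\delta)$ and finally to $L(n,\chi)$. Since $Z(n,\delta)=\sum_{\sigma U=\delta}\zeta_A(\sigma,n)$ sums over a coset of the $p$-Sylow subgroup $U\subset G$, and different cosets are disjoint, the family $\{Z(n,\delta)\}_{\delta\in\Delta}$ is $\overline K$-linearly independent for each fixed $n$; across $n\in\mathcal S$ the variables remain disjoint. A linearly independent set of linear combinations of algebraically independent elements is itself algebraically independent (extend to a full basis via further $\overline K$-linear combinations and compute transcendence degrees), so $\{\pi_\rho\}\cup\{Z(n,\delta):\delta\in\Delta,\ n\in\mathcal S\}$ is algebraically independent over $\overline K$. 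Finally, for each fixed $n$, Fourier inversion on the finite abelian group $\Delta$ gives
\[ Z(n,\delta)\;=\;\frac{1}{|\Delta|}\sum_{\chi\in\widehat G}\chi(\delta)^{-1}L(n,\chi), \]
which is a legitimate invertible $\overline K$-linear change of basis because $|\Delta|\in\bZ_p^\times$. Applying this one $n$ at a time shows $\overline K\bigl(\pi_\rho,\{L(n,\chi)\}_{\chi,n}\bigr)=\overline K\bigl(\pi_\rho,\{Z(n,\delta)\}_{\delta,n}\bigr)$, and the two generating sets have the same cardinality $1+|\Delta|\cdot|\mathcal S|$, so algebraic independence transfers from the $Z$'s to the $L$'s.

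The main obstacle I anticipate is purely bookkeeping: stating precisely and applying cleanly the fact that $\overline K$-linearly independent combinations of algebraically independent quantities remain algebraically independent, and checking that at every stage the transition matrices are genuinely invertible over $\overline K$ (Dedekind's determinant for $(\sigma(b_i))$, and the character matrix $(\chi(\delta))$ for the finite abelian group $\Delta$ of order prime to $p$). There is no transcendence-theoretic difficulty here: once Theorem \ref{theorem:main2} is in hand, the passage to Goss $L$-values is entirely linear algebra over $\overline K$, organized around the fact that the Hilbert class field has separable degree $[H:K]=h$ and the wild part $U$ is absorbed into the coset sum defining $Z(n,\delta)$.
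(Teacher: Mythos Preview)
Your proposal is correct and follows essentially the same route as the paper: the chain of $\overline K$-linear substitutions from $\{\zeta_\rho(b_i,n)\}$ (Theorem~\ref{theorem:main2}) to $\{\zeta_A(\sigma,n)\}$ (Proposition~\ref{prop: Goss}) to $\{Z(n,\delta)\}$ to $\{L(n,\chi)\}$ via Fourier inversion on $\Delta$. The paper compresses all this into ``the above discussion combined with Theorem~\ref{theorem:main2} implies immediately,'' whereas you spell out explicitly the invertibility of each transition matrix and the needed upgrade of Proposition~\ref{prop: Goss} to the full range $n\in\mathcal S$ --- a useful clarification, since the paper's statement of that proposition is for a single $n$ even though its proof already invokes Theorem~\ref{theorem:main2}.
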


As a direct consequence, we obtain the following corollary:

\begin{corollary} \label{cor: Goss zeta values}
Let $m \in \mathbb N, m \geq 1$. Let $L$ be an extension of $K$ such that $L \subset H$. We consider the following set
\begin{align*}
\mathcal G_L=\{\pi_\rho\} \cup \{\zeta_{O_L}(n): 1 \leq n \leq m \text{ such that } q-1 \nmid n \text{ and } p \nmid n \}.
\end{align*}
Then the elements of $\mathcal G_L$ are algebraically independent over $\overline K$.
\end{corollary}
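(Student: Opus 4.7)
The plan is to deduce Corollary \ref{cor: Goss zeta values} from Theorem \ref{theorem: Goss zeta values} by factoring $\zeta_{O_L}(n)$ into Goss $L$-series. Since $H$ is the Hilbert class field of $A$ in which $\infty$ splits completely, the extension $H/K$ is abelian and unramified everywhere, and so is every intermediate extension $L/K$. An Euler product computation --- grouping primes of $O_L$ above each prime $P \subset A$ by their Frobenius, and comparing with the Euler product of $L(n,\chi)$ in the spirit of the identity $\det_{\mathbb T_z(K_\infty)}\mathcal L(\rho/B;n;z) = Z_B(n;z)$ proved above for $E=H$ --- yields a formula of the shape
\[
\zeta_{O_L}(n) \;=\; \Bigl(\prod_{\chi} L(n,\chi)\Bigr)^{|U_L|},
\]
where $\chi$ ranges over the $\overline{\mathbb F}_q^\times$-valued characters of $\Gal(L/K)$ (inflated to $G$ via $G\twoheadrightarrow \Gal(L/K)$) and $U_L\subseteq \Gal(L/K)$ denotes the $p$-Sylow subgroup. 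The $|U_L|$-th power appears because $\overline{\mathbb F}_q^\times$-valued characters do not detect wild ramification; nevertheless $|U_L|$ is a positive integer, so $\zeta_{O_L}(n)$ is a genuine non-trivial monomial in the variables $\{L(n,\chi)\}_\chi$.

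With this formula in hand, I would invoke the elementary fact that if $x_1,\dots,x_N$ are algebraically independent over a field $k$, then monomials $m_i = \prod_j x_j^{a_{ij}}$ are themselves algebraically independent over $k$ if and only if their integer exponent vectors $(a_{ij})_j$ are $\mathbb Z$-linearly independent. By Theorem \ref{theorem: Goss zeta values}, the set
\[
\{\pi_\rho\} \cup \{L(n,\chi) : \chi \in \widehat G,\ 1 \le n \le m,\ q-1 \nmid n,\ p \nmid n\}
\]
is algebraically independent over $\overline K$, so it suffices to verify $\mathbb Z$-linear independence of the exponent vectors attached to $\pi_\rho$ and to each $\zeta_{O_L}(n)$ in the free abelian group on this generating set.

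For each admissible $n$, the monomial $\zeta_{O_L}(n)$ is supported only on coordinates indexed by pairs $(n,\chi)$ with $\chi$ factoring through $\Gal(L/K)$, and its exponent on that support is the non-zero integer $|U_L|$. As $n$ varies over admissible integers these supports are pairwise disjoint (the first coordinate $n$ distinguishes them), and none of them contains the coordinate of $\pi_\rho$. The exponent vectors are therefore trivially $\mathbb Z$-linearly independent, and the conclusion of Corollary \ref{cor: Goss zeta values} follows. The only step I expect to demand real work is the Euler product factorization above when $\Gal(L/K)$ has $p$-torsion: the classical abelian-extension identity $\zeta_L(s) = \prod_\chi L(s,\chi)$ must be adapted to Goss's setting where characters take values in $\overline{\mathbb F}_q^\times$, and this is most cleanly done at the level of the $\mathbb T_z(K_\infty)$-linear operator $\mathcal L(\rho/O_L;n;z)$ and its determinant, exactly as in the computation of $Z_B(n;z)$ carried out earlier.
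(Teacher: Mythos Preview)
Your approach is correct and essentially identical to the paper's: the paper records the factorization $\zeta_{O_L}(n)=\bigl(\prod_{\chi\in\widehat G,\ \chi(N)=\{1\}}L(n,\chi)\bigr)^{p^k}$ (citing \cite[\S 8.10]{Gos96}), where $p^k\,\|\,[L:K]$ and $N=\Gal(F/F\cap L)$, which is exactly your formula re-indexed, and then concludes in one line from Theorem \ref{theorem: Goss zeta values}. Your elaboration of the disjoint-support monomial argument makes explicit what the paper leaves to the reader.
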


\begin{remark}
1) When $L=K$, we have shown that $\zeta_A(1)$ is transcendental over $K$, which gives an affirmative answer to an old question of D. Goss \footnote{Personal communication in Spring 2016}.

2) When $L=H$, the above Theorem states that $\zeta_B(1)$ is transcendental over $K$. It answers positively to \cite{ANDTR20}, Problem 4.1 in this case. Note that our proof is highly nontrivial. 
\end{remark}

\begin{proof}[Proof of Corollary \ref{cor: Goss zeta values}]
Let $p^k$ be the exact power of $p$ that divides $[L:K]$ and let $N={\rm Gal} (F/F\cap L) \subseteq \Delta$.  We have (see for example \cite{Gos96}, Section 8.10):
	\[ \zeta_{O_L}(n) =\left(\prod_{\chi \in \widehat{G}, \chi(N)=\{1\}} L(n, \chi) \right)^{p^k}. \]
Thus Corollary \ref{cor: Goss zeta values} follows from Theorem \ref{theorem: Goss zeta values}.
\end{proof}



\end{document}